\definecolor{citation}{rgb}{0.2,0.58,0.2} 
\definecolor{formula}{rgb}{0.1,0.2,0.6}
\definecolor{url}{rgb}{0.3,0,0.5}
\newcommand{\reqnomode}{\tagsleft@false}
\def\dx{\,{\rm d}x}
\def\dy{\,{\rm d}y}
\def \d{\,{\rm d}}
\def \diver{\,{\rm div}}
\def\dist{\,{\rm dist}}
\def\supp{\,{\rm supp}}
\def\diam{\,{\rm diam}}
\DeclareRobustCommand*{\bfseries}{%
  \not@math@alphabet\bfseries\mathbf
  \fontseries\bfdefault\selectfont
  \boldmath
}
\newlength{\defbaselineskip}
\newcommand{\setlinespacing}[1]
           {\setlength{\baselineskip}{#1 \defbaselineskip}}
\newcommand{\mint}{\mathop{\int\hskip -1,05em -\, \!\!\!}\nolimits}
\newtheorem{theorem}{Theorem}
\newtheorem{definition}{Definition}
\newtheorem{remark}{Remark}[section]
\newtheorem{lemma}{Lemma}[section]
\newtheorem{proposition}{Proposition}[section]
\numberwithin{equation}{section}
\def\en{\mathbb N}
\def\er{\mathbb R}
\newcommand\eps\varepsilon
\def\eqn#1$$#2$${\begin{equation}\label#1#2\end{equation}}
\newcommand{\htt}{\tilde{H}}
\newcommand{\be}{\begin{equation}}
\newcommand{\ee}{\end{equation}}
\newcommand{\rr}{\varrho}
\newcommand{\const}{\operatorname{const}}
\newcommand{\snr}[1]{\lvert #1\rvert}
\newcommand{\nr}[1]{\lVert #1 \rVert}
\newcommand{\N}{\mathbb{N}}
\def\name[#1, #2]{#1 #2}
\def\loc{\operatorname{\rm{loc}}}
\title[]{Regularity results for a class of non-autonomous obstacle problems with $(p,q)$-growth}
\author{Cristiana De Filippis}  \address{Cristiana De Filippis\\Mathematical Institute, University of Oxford\\ Andrew Wiles Building, Radcliffe Observatory Quarter, Woodstock Road, Oxford, OX26GG, Oxford, United Kingdom}
\begin{document}

\subjclass[2010]{35J60, 35J70\vspace{1mm}} 

\keywords{Regularity, non-autonomous functionals, obstacle problems, $(p,q)$-growth\vspace{1mm}}

\thanks{{\it Acknowledgements.}\ This work is supported by the Engineering and Physical Sciences Research Council (EPSRC): CDT Grant Ref. EP/L015811/1. 
\vspace{1mm}}

\maketitle

\begin{abstract}
We study some regularity issues for solutions of non-autonomous obstacle problems with $(p,q)$-growth. Under suitable assumptions, our analysis covers the main models available in the literature.
\end{abstract}
\vspace{3mm}
{\small \tableofcontents}

\setlinespacing{1.08}
\section{Introduction}
Regularity for local minimizers of the functional
\begin{flalign}\label{101}
W^{1,p}(\Omega)\ni w\mapsto \mathcal{F}(w,\Omega):= \int_{\Omega}F(x,Dw) \ \dx
\end{flalign}
where the integrand $F$ has power growth 
\begin{flalign}\label{100}
\snr{z}^{p}\lesssim F(x,z)\lesssim (1+\snr{z}^{2})^{\frac{p}{2}}\qquad p\in (1,\infty)
\end{flalign}
has been investigated in the fundamental works \cite{acfu,giagiudiff,giamod,ha,kumi,kumi1,laur,uh,ur}. The outcome is $C^{0,\beta_{0}}$-regularity for the gradient of solutions with $\beta_{0}\in (0,1)$, and such result is optimal, in the light of the counterexample contained in \cite{ur}. Later on, in the seminal papers \cite{ma2,ma3,ma4,ma1}, was introduced the so-called $(p,q)$-growth condition, i.e.:
\begin{flalign}\label{102}
\snr{z}^{p}\lesssim F(x,z)\lesssim (1+\snr{z}^{2})^{\frac{q}{2}}\qquad 1<p\le q<\infty,
\end{flalign}
which is more flexible than \eqref{100} and allows dealing with models coming from fluid mechanics and material science, \cite{z1,z2,z3}, such as
\begin{flalign*}
w\mapsto \int_{\Omega}\snr{Dw}^{p(x)} \ \dx \qquad \mbox{and}\qquad w\mapsto \int_{\Omega}\left[\snr{Dw}^{p}+a(x)\snr{Dw}^{q}\right] \ \dx.
\end{flalign*}
This new framework has been object of intense investigation over the last two decades, see \cite{acmi,bacomi,bemi,cakrpa,demi,distve,elmama,eslemi,fomami,fumi,haok} for an incomplete list of relatively recent contributions and \cite{dark} for a reasonable survey. In these works is studied the regularity for minimizers of variational integrals like the one in \eqref{101} with \eqref{102} in force, which are "free", in the sense that no additional constraint is imposed on solutions and competitors. Classical examples of constrained variational problems are those involving manifold valued maps, see \cite{de1,de,demi1} for the $(p,q)$-growth case, and obstacle problems. The latter were treated at length in the literature, see \cite{ch,chle,elpa,fu,fuli,hekima,muzi} for variational inequalities modelled upon the $p$-laplacean energy and \cite{bylizh,byleohpa,chde,ele-2,fush,fumi} for more general structures. The underlying principle is that solutions of the obstacle problem should reflect the regularity of the obstacle itself. This holds \emph{verbatim} for linear problems, in which solutions are as regular as the obstacle and for certain nonlinear models with Harnack inequalities and full regularity available for unconstrained minimizers. However, this is no longer the case in the nonlinear setting for general integrands without any specific structure. In this situation, extra regularity must be imposed on the obstacle to balance, in some sense, both the nonlinearity and the non-standard growth. The increasing interest towards the regularity for solutions of obstacle problems is also justified by the fact that they can be employed as comparison maps in the investigation of fine properties of solutions of some PDE, see \cite{chde,fush, hekima,kizo} and references therein.\\
In this paper we provide some regularity results for solutions of non-autonomous obstacle problems with $(p,q)$-growth. In dealing with this, the first big problem arising is the possible occurrence of the Lavrentiev phenomenon, i.e.:
\begin{flalign}\label{103}
\inf_{w\in (W^{1,p}\cap\{w\ge \psi\})}\int_{\Omega}F(x,Dw) \ \dx<\inf_{w\in (W^{1,q}\cap\{w\ge \psi\})}\int_{\Omega}F(x,Dw) \ \dx.
\end{flalign}
This is a clear obstruction to regularity, since \eqref{103} prevents minimizers to belong to $W^{1,q}$. Notice that \eqref{103} cannot happen if $p=q$ or if $F$ is autonomous and convex. Moreover, as pointed out in \cite[Section 3]{eslemi}, the appearance of \eqref{103} has geometrical reasons and cannot be spotted via standard techniques. Therefore, the basic strategy consists in excluding the occurrence of \eqref{103} by imposing that the Lavrentiev gap functional vanishes on solutions: at this stage, the closeness condition formulated in \eqref{pq} below assures the validity of certain a priori estimates, then, a convergence argument renders 
\begin{theorem}\label{t1}
Under assumptions \eqref{104}, \eqref{assf} and \eqref{pq}, let $\psi$ be as in \eqref{obbd} and $g$ as in \eqref{op3}. If the solution $v\in \mathcal{K}_{\psi,g}(\Omega)$ of problem \eqref{op} satisfies \eqref{nolavq}, then it has the following regularity features:
\begin{itemize}
    \item[-] $Dv\in L^{d}_{loc}(\Omega,\mathbb{R}^{n})$ for all $d\in \left[1,\frac{np}{n-\alpha}\right)$;
    \item[-]$V_{\mu,p}(Dv)\in W^{2,\beta}_{loc}(\Omega,\mathbb{R}^{n})$ for all $\beta\in \left(0,\frac{\alpha}{2}\right)$.
\end{itemize}
In particular, if $B_{\rr}\Subset \Omega$ is any ball, there holds that
\begin{flalign}\label{18}
\nr{Dv}_{L^{d}(B_{\rr/2})}\le \frac{c(\texttt{data}_{\texttt{q}},d)}{\rr^{\theta}}\left[1+\int_{B_{\rr}}\left[F(x,Dv)+\snr{D\psi}^{q}\right] \ \dx\right]^{\bar{\gamma}},
\end{flalign}
where $\theta=\theta(n,p,q,\alpha)$ and $\bar{\gamma}=\bar{\gamma}(n,p,q,\alpha,d)$.
\end{theorem}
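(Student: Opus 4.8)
The plan is to follow the now-classical scheme for non-autonomous $(p,q)$-growth obstacle problems: construct a penalized/regularized approximation whose minimizers are smooth enough to differentiate the Euler--Lagrange inequality, derive a priori estimates on these, and pass to the limit using the no-Lavrentiev assumption \eqref{nolavq} to transfer the bounds to $v$. Concretely, I would first freeze the coefficient by replacing $a(x)$ with smooth approximations $a_\varepsilon$ and mollifying the obstacle $\psi$ to $\psi_\varepsilon$, add a small $\varepsilon$-regularization of the form $\varepsilon(1+\snr{Dw}^2)^{q/2}$ (or a higher-order term $\frac{\varepsilon}{2}\snr{D^j w}^2$ as hinted at by the macros in the preamble) to make the functional genuinely $W^{1,q}$-coercive, and solve the corresponding obstacle problem on a ball $B_\rr\Subset\Omega$ with the boundary datum $g$. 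Standard penalization (sending the obstacle constraint into a term like $\lambda^{-1}(w-\psi_\varepsilon)_-^{?}$, or working directly with the variational inequality) plus the smoothness of the regularized integrand yields $W^{1,\infty}_{loc}\cap W^{2,2}_{loc}$-regularity for the approximating minimizers $v_\varepsilon$.

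Next I would establish the two-sided comparison: by minimality $\mathcal{F}_\varepsilon(v_\varepsilon)\le \mathcal{F}_\varepsilon(\text{competitor built from }v)$, and here the assumption \eqref{nolavq} (vanishing of the Lavrentiev gap on $v$) is exactly what lets me use a $W^{1,q}$-competitor converging to $v$ in energy; this gives a uniform bound $\int_{B_\rr} F(x,Dv_\varepsilon)\,\dx \le c\big[1+\int_{B_\rr}(F(x,Dv)+\snr{D\psi}^q)\big]$, which is the right-hand side appearing in \eqref{18}. Then comes the core PDE work: differentiating the Euler--Lagrange inequality for $v_\varepsilon$ in the direction $\tau_h$ (finite differences), testing with something like $\tau_{-h}(\tau_h v_\varepsilon \cdot \eta^2)$ while carefully handling the obstacle term (this is where one needs $\psi$ to have the extra Sobolev/fractional regularity encoded in \eqref{obbd}), and using the closeness condition \eqref{pq} between $p$ and $q$ to absorb the "bad" terms coming from $Da_\varepsilon$. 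This produces a Besov/fractional-differentiability estimate for $V_{\mu,p}(Dv_\varepsilon)$: $V_{\mu,p}(Dv_\varepsilon)\in W^{\sigma,2}_{loc}$ for a suitable $\sigma$ related to $\alpha$, uniformly in $\varepsilon$. A Gagliardo--Nirenberg / Sobolev embedding argument upgrades this to a uniform higher-integrability bound $Dv_\varepsilon\in L^{d}_{loc}$ for $d<\frac{np}{n-\alpha}$, together with the scaling-explicit estimate $\rr^{-\theta}[\dots]^{\bar\gamma}$.

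The final step is the limit $\varepsilon\to 0$: the uniform energy and $W^{1,d}_{loc}$ bounds give weak convergence $v_\varepsilon\rightharpoonup v_*$ in $W^{1,d}_{loc}$; one identifies $v_*$ with the solution $v$ of \eqref{op} using \eqref{nolavq} (this is the only point where that hypothesis is truly indispensable — without it the limit could be a different, larger-energy object), and lower semicontinuity of the $L^d$-norm and of the $W^{2,\beta}$-seminorm of $V_{\mu,p}(\cdot)$ passes the estimates to $v$, yielding both bullet points and \eqref{18}. I expect the main obstacle to be the Caccioppoli-type estimate on the finite differences in the presence of the obstacle: the constraint $v_\varepsilon\ge\psi_\varepsilon$ means the natural test functions are not admissible unless corrected, and the correction terms involve $D^2\psi_\varepsilon$ (or fractional derivatives thereof) whose control consumes precisely the regularity hypothesis \eqref{obbd}; simultaneously one must keep every constant independent of $\varepsilon$ and track the interplay between the $(p,q)$-gap, the exponent $\alpha$ governing the modulus of continuity of $a$, and the dimension $n$ so that the bound \eqref{pq} suffices to close the estimate. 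A secondary technical point is ensuring the penalization does not destroy the $\varepsilon$-uniformity, which is why one typically uses the obstacle problem directly (variational inequality) rather than an additive penalty, and handles the one-sided test functions via $\min\{v_\varepsilon-\psi_\varepsilon,\cdot\}$-type truncations.
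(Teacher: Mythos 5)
Your strategy is essentially the one the paper follows: add an $\varepsilon_j(1+\snr{z}^{2})^{q/2}$ regularization, use the vanishing Lavrentiev gap \eqref{nolavq} to produce $W^{1,q}$ competitors and uniform energy bounds, run a finite-difference argument on the variational inequality with an obstacle-corrected test function, obtain fractional differentiability of $V_{\mu,p}(Dv_j)$, embed, and pass to the limit, identifying the limit via strict convexity. Three points, however, need correction. First, the regularized obstacle problem cannot be posed with boundary datum $g$: the admissible class must sit inside $W^{1,q}(B_{\varrho})$ and the Lavrentiev competitor must be admissible in it, so the boundary datum has to be $\tilde v_j$, the $j$-th element of the approximating sequence produced by \eqref{nolavq} (this is exactly what makes the comparison $\mathcal{F}_j(v_j)\le\mathcal{F}_j(\tilde v_j)\to\mathcal{F}(v)$ work and keeps the class non-empty); $g$ is only $W^{1,p}$ and lives on $\partial\Omega$. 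Second, under \eqref{assf} the integrand is merely $C^1$ in $z$, so the claimed $W^{2,2}_{loc}\cap W^{1,\infty}_{loc}$ regularity of the approximants is neither available nor needed; the argument must (and does in the paper) stay at the level of finite differences of the variational inequality, with test function $v_j+\tfrac12\tau_{-h}(\eta^{2}\tau_h(v_j-\psi))$, whose admissibility and error terms are controlled through $\tau_hD\psi$ and \eqref{34}, not through $D^{2}\psi$.

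The third point is the one where your sketch could actually fail to close. The fractional Caccioppoli estimate \eqref{7} has $\int_{B_s}\snr{Dv_j}^{q}\,\dx$ on its right-hand side, and this quantity is \emph{not} uniformly bounded in $j$ (a priori it is only of size $\varepsilon_j^{-1}$). The role of \eqref{pq} is not to absorb terms coming from derivatives of the coefficient (there is no such structure in the general integrand of Theorem \ref{t1}); it is to guarantee $q<\frac{np}{n-2\beta}$ for suitable $\beta<\alpha/2$, so that one can interpolate $\nr{Dv_j}_{L^{q}}$ between $\nr{Dv_j}_{L^{np/(n-2\beta)}}$ and the uniformly controlled $\nr{Dv_j}_{L^{p}}$ with a sublinear power on the higher norm, then reabsorb by Young's inequality and the hole-filling iteration Lemma \ref{l5} over the radii $\varrho/2\le t<s\le\varrho$. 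Without spelling out this self-improvement step the a priori bound on $\nr{Dv_j}_{L^{d}(B_{\varrho/2})}$ is not uniform in $j$ and the limit passage does not yield \eqref{18}. Once these three items are fixed, your outline matches the paper's proof, including the limit identification through the energy inequality and strict convexity.
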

It is reasonable to expect that, strengthening the regularity assumptions on both, integrand and obstacle, we can actually show better regularity properties than those obtained in Theorem \ref{t1}. In fact, 
\begin{theorem}\label{t2} Under assumptions \eqref{104}, \eqref{assfh}, \eqref{assh} and \eqref{pqh}, let $\psi$ be as in \eqref{obsh}-\eqref{obsh1} and $g$ as in \eqref{op3}. If the solution $v\in \mathcal{K}_{\psi,g}(\Omega)$ of problem \eqref{op} satisfies \eqref{nolavq}, then 
\begin{flalign*}
v\in W^{1,\infty}_{loc}(\Omega).
\end{flalign*}
Moreover, if $B_{\rr}\subset B_{r}\Subset \Omega$ are concentric balls, the following local Lipschitz estimate holds
\begin{flalign*}
\sup_{x\in B_{\rr}}\snr{Dv(x)}\le \left(\frac{c}{r-\rr}\right)^{\tilde{\theta}}\left[1+\int_{B_{r}}F(x,Dv) \ \dx\right]^{\theta},
\end{flalign*}
with $c=c(\texttt{data}_{\infty})$, $\theta=\theta(n,p,q,s)$ and $\tilde{\theta}=\tilde{\theta}(n,p,q,s)$.
\end{theorem}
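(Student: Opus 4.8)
\smallskip
\noindent\emph{Proof sketch of Theorem \ref{t2}.}
The strategy is the one typical for this circle of problems: construct a family of regularized, uniformly elliptic obstacle problems of $q$-growth, reduce each of them to a partial differential equation with a controlled right-hand side, prove a Lipschitz a priori bound that is \emph{uniform} with respect to the regularization, and finally let the parameter go to zero exploiting the vanishing of the Lavrentiev gap \eqref{nolavq}. For the first step I would regularize $F$ in the Esposito--Leonetti--Mingione spirit: mollify the $x$-dependence and add $\eps(1+\snr{z}^{2})^{q/2}$, obtaining smooth integrands $F_{\eps}$ that are genuinely elliptic of $q$-growth and still comply, uniformly in $\eps$, with \eqref{104}, \eqref{assfh} and \eqref{pqh}; simultaneously mollify the obstacle into $\psi_{\eps}\in C^{\infty}$ preserving \eqref{obsh}--\eqref{obsh1} uniformly, and adjust the datum $g$ accordingly. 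Let $v_{\eps}\in\mathcal K_{\psi_{\eps},g_{\eps}}(\Omega)$ be the unique minimizer of the corresponding obstacle problem: since $F_{\eps}$ is uniformly elliptic of $q$-growth, $v_{\eps}$ inherits as much smoothness as the data allow, in particular $v_{\eps}\in W^{2,2}_{loc}\cap W^{1,\infty}_{loc}$, so every manipulation below is licit at this level.

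\emph{From the inequality to an equation.} On the non-contact set $\{v_{\eps}>\psi_{\eps}\}$ the map $v_{\eps}$ is a free local minimizer, hence $-\operatorname{div}\partial_{z}F_{\eps}(x,\dd v_{\eps})=0$ there; globally the variational inequality yields $-\operatorname{div}\partial_{z}F_{\eps}(x,\dd v_{\eps})=\mu_{\eps}$ for a nonnegative Radon measure $\mu_{\eps}$ carried by the contact set, where $\dd v_{\eps}=\dd\psi_{\eps}$ and the standard comparison bounds $\mu_{\eps}$ from above by $\big(-\operatorname{div}\partial_{z}F_{\eps}(\cdot,\dd\psi_{\eps})\big)^{+}$. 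By the $W^{2,s}$-type regularity of the obstacle in \eqref{obsh}--\eqref{obsh1} and the growth/ellipticity in \eqref{assfh}, this last quantity lies in $L^{\sigma}_{loc}$ for a suitable $\sigma=\sigma(n,p,q,s)$, with norm controlled by $\texttt{data}_{\infty}$ and by $\int F_{\eps}(x,\dd v_{\eps})$, uniformly in $\eps$; the latter integral is itself bounded by testing the minimality of $v_{\eps}$ against a recovery sequence furnished by \eqref{nolavq}. Thus each $v_{\eps}$ solves a $(p,q)$-elliptic, $\alpha$-Hölder-in-$x$ equation with right-hand side bounded in $L^{\sigma}_{loc}$ uniformly in $\eps$.

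\emph{Uniform Lipschitz bound.} This is the heart of the proof. Differentiating the equation along $e_{k}$ and testing the differentiated relation with functions of the form $\eta^{2}(1+\snr{\dd v_{\eps}}^{2})^{\gamma}\partial_{k}v_{\eps}$ produces, for $\gamma$ ranging over a suitable interval, a Caccioppoli inequality for $(1+\snr{\dd v_{\eps}}^{2})$ in which the non-uniform ellipticity is quantified by the ratio $q/p$ and by the Hölder exponent $\alpha$, while the reaction term contributes a correction absorbed through $\nr{\mu_{\eps}}_{L^{\sigma}}$ and Sobolev embedding. Starting the scheme requires a mild gain of integrability of $\dd v_{\eps}$, which comes from the Caccioppoli-type inequalities for second derivatives already underlying Theorem \ref{t1} (adapted, uniformly in $\eps$, to the regularized problem); then, provided \eqref{pqh} holds, the exponents entering the ensuing Moser iteration close and one reaches
\begin{flalign*}
\sup_{x\in B_{\rr}}\snr{\dd v_{\eps}(x)}\le\left(\frac{c}{r-\rr}\right)^{\tilde\theta}\left[1+\int_{B_{r}}F_{\eps}(x,\dd v_{\eps}) \ \dx\right]^{\theta},
\end{flalign*}
with $c=c(\texttt{data}_{\infty})$, $\theta=\theta(n,p,q,s)$ and $\tilde\theta=\tilde\theta(n,p,q,s)$, all independent of $\eps$.

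\emph{Passage to the limit.} By \eqref{nolavq} together with the choice of $F_{\eps}$ and $\psi_{\eps}$, no Lavrentiev gap is created along the approximation, so $\int_{B_{r}}F_{\eps}(x,\dd v_{\eps})\,\dx\to\int_{B_{r}}F(x,\dd v)\,\dx$ and, along a subsequence, $\dd v_{\eps}\to\dd v$ a.e.; strict convexity identifies the limit with $v$. Since $\dd v_{\eps}$ is uniformly bounded in $L^{\infty}(B_{\rr})$, it converges weak-$*$ there and lower semicontinuity of the $L^{\infty}$-norm transfers the bound of the previous step to $\dd v$, which yields $v\in W^{1,\infty}_{loc}(\Omega)$ together with the asserted local Lipschitz estimate. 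The main obstacle is the uniform a priori estimate: one must keep a strictly positive margin in the iteration while the oscillation in $x$ of $F$ \emph{and} the obstacle reaction in $L^{\sigma}$ simultaneously erode the ellipticity budget, which is precisely why the stronger hypotheses \eqref{assfh}, \eqref{assh}, \eqref{pqh} are imposed in place of \eqref{assf}, \eqref{pq}; closing all the exponents demands a careful quantitative bookkeeping in the Moser iteration, in the style of the potential-theoretic estimates for non-uniformly elliptic equations with data.
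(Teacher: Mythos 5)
Your sketch follows essentially the same route as the paper: regularize by mollifying $F$ and adding an $\varepsilon(1+\snr{z}^{2})^{q/2}$ correction, linearize the variational inequality into an equation with a nonnegative measure supported on the contact set whose density is controlled by $-\dv\big(\partial_{z}F(\cdot,D\psi)\big)$ in $L^{s}$, run a Caccioppoli--Sobolev--Moser iteration uniform in the regularization, and pass to the limit via \eqref{nolavq}, weak-$*$ lower semicontinuity and strict convexity. The only cosmetic deviations are that the paper keeps two separate parameters (the index $j$ of the Lavrentiev recovery sequence, whose members serve as boundary data and admissible competitors for the uniform energy bound, and the mollification parameter $\delta$) and does not mollify the obstacle, since $\psi\in W^{2,\infty}$ already suffices; your mollified $\psi_{\eps}$ would need a small correction to keep the recovery sequence admissible.
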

The Lipschitz bound in Theorem \ref{t2} is essentially realized in three steps: first, the problem is linearized via the identification of a non-negative Radon measure which turns the variational inequality naturally associated to a regularized version of \eqref{op} into an integral identity. Then, the revisited Moser's iteration introduced in \cite{demi} leads to a uniform bound on the sup-norm of the gradient of a suitable sequence of maps approximating the original solution. Finally, careful convergence arguments give the conclusion. \\
The paper is organized as follows: in Section \ref{pre} we list some basic assumptions which will always be in force and  strengthened when needed; well-known results on fractional Sobolev spaces and some useful miscellanea. We also briefly discuss existence and uniqueness for solutions of problem \eqref{op}. In Section \ref{re} we tackle the question of relaxation of functionals with $(p,q)$-growth with obstacle constraint. Sections \ref{pt1}-\ref{pt2} are devoted to the proof of Theorems \ref{t1}-\ref{t2} respectively, while in Section \ref{diff} we provide a higher weak differentiability result for local minimizers of variational integrals with standard $q$-growth and obstacle constraint.
\section{Preliminaries}\label{pre}
\subsection{Main assumptions} \label{ma} In this section we shall collect some minimal hypotheses which will be eventually strengthened throughout the paper. We assume that $\Omega\subset \mathbb{R}^{n}$, $n\ge 2$, is an open, bounded domain with $C^{1}$ boundary and $F\colon \Omega\times \mathbb{R}^{n}\to \mathbb{R}$ is a Carath\'eodory integrand satisfying, for all $x,x_{1},x_{2}\in \Omega$ and $z,z_{1},z_{2}\in \mathbb{R}^{n}$
\begin{flalign}\label{assf0}
\begin{cases}
\ \nu\snr{z}^{p}\le F(x,z)\le L(1+\snr{z}^{2})^{\frac{q}{2}} \ \ \mbox{for all} \ \ (x,z)\in \Omega\times \mathbb{R}^{n} \\
\ z\mapsto F(\cdot,z) \ \ \mbox{is convex},
\end{cases}
\end{flalign}
where $0<\nu\le L$ are absolute constants and the exponents $(p,q)$ are so that
\begin{flalign}\label{pq0}
1<p< q\quad \mbox{and}\quad 0< q-p<\frac{p}{n-1}.
\end{flalign}
Let us consider also two measurable functions: $\psi\colon \Omega\to \mathbb{R}$ so that
\begin{flalign}\label{obs0}
\psi\in W^{1,q}(\Omega)
\end{flalign}
and 
\begin{flalign}\label{op3}
g\in W^{1,p}(\bar{\Omega}).
\end{flalign}
We are interested in some regularity properties of solutions of the obstacle problem
\begin{flalign}\label{op}
\mathcal{K}_{\psi,g}(\Omega)\ni w\mapsto \min\mathcal{F}(w,\Omega),
\end{flalign}
where 
\begin{flalign}\label{op1}
\mathcal{F}(w,\Omega):=\int_{\Omega}F(x,Dw) \ \dx
\end{flalign}
and
\begin{flalign}\label{op2}
\mathcal{K}_{\psi,g}(\Omega):=\left\{w\in W^{1,p}(\Omega)\colon w(x)\ge \psi(x) \ \mbox{a.e. in} \ \Omega \ \mbox{and} \ \left.w\right|_{\partial \Omega}=\left.g\right|_{\partial \Omega}\right\}.
\end{flalign}
In the following, we shall always assume that
\begin{flalign}\label{104}
\mathcal{K}_{\psi,g}(\Omega) \quad \mbox{is non-empty}.
\end{flalign}
Notice that if $v\in\mathcal{K}_{\psi,g}(\Omega)$ is a solution of problem \eqref{op}, then it is a local minimizer of the variational integral in \eqref{op1} with the obstacle constraint, in the sense of the following definition.
\begin{definition}\label{dm}
By local minimizer of \eqref{op1} with obstacle constraint we mean a map $v\in W^{1,p}(\Omega)$ such that
\begin{flalign*}
F(\cdot,Dv)\in L^{1}(\Omega), \qquad v(x)\ge \psi(x) \ \mbox{a.e. in} \ \Omega
\end{flalign*}
and whenever $\tilde{\Omega}\subseteq \Omega$ is an open set there holds that
\begin{flalign*}
\int_{\tilde{\Omega}}F(x,Dv) \ \dx \le \int_{\tilde{\Omega}}F(x,Dw) \ \dx \quad \mbox{for all} \ \ w\in v+W^{1,p}_{0}(\tilde{\Omega})\ \mbox{such that} \ w\ge \psi \ \mbox{a.e. in} \ \tilde{\Omega}.
\end{flalign*}
\end{definition}
In fact, if $\tilde{\Omega}\Subset \Omega$ is any open subset and $w\in v+W^{1,p}_{0}(\tilde{\Omega})$ is such that $w(x)\ge \psi(x)$ for a.e. $x\in \Omega$, then the map
\begin{flalign*}
\tilde{w}(x):=\begin{cases} 
\ w(x)\quad &\mbox{if} \ x\in \tilde{\Omega}\\
\ v(x)\quad &\mbox{if} \ x\in \Omega\setminus \tilde{\Omega}
\end{cases}
\end{flalign*}
belongs to $W^{1,p}(\Omega)$ since $v-w\in W^{1,p}_{0}(\tilde{\Omega})$ and by construction, $\tilde{w}\ge \psi$ a.e. in $\Omega$. Thus $\tilde{w}\in \mathcal{K}_{\psi,g}(\Omega)$ and
\begin{flalign*}
\int_{\tilde{\Omega}}F(x,Dv) \ \dx =&\int_{\Omega}F(x,Dv) \ \dx-\int_{\Omega\setminus \tilde{\Omega}}F(x,Dv) \ \dx \nonumber \\
\le &\int_{\Omega}F(x,D\tilde{w}) \ \dx -\int_{\Omega\setminus \tilde{\Omega}}F(x,Dv) \ \dx =\int_{\tilde{\Omega}}F(x,Dw) \ \dx.
\end{flalign*}
In particular, this argument shows that if $v\in \mathcal{K}_{\psi,g}(\Omega)$ is a solution of problem \eqref{op} and $\tilde{\Omega}\Subset \Omega$ is any open subset with boundary regular enough to allow for the concept of traces, then $v$ is a solution of the obstacle problem
\begin{flalign*}
\mathcal{K}_{\psi,v}(\tilde{\Omega})\ni w\mapsto \min \mathcal{F}(w,\tilde{\Omega}),
\end{flalign*}
where $\mathcal{K}_{\psi,v}(\tilde{\Omega})$ is defined as in \eqref{op2} with $g$ replaced by $v$, $\tilde{\Omega}$ instead of $\Omega$ and it is obviously non-empty, since $v\in \mathcal{K}_{\psi,v}(\tilde{\Omega})$.
\begin{remark}\label{bdd}
\emph{Being the outcomes of Theorems \ref{t1}-\ref{t2} local in nature, we do not assume more than \eqref{op3} for the regularity of the boundary datum $g$. Anyway, by $\eqref{pq0}_{2}$ and \cite[Lemma 2.1]{acbofo}, hypotheses \eqref{op3} makes problem \eqref{op} well posed.}
\end{remark}
\subsection{Notation} 
In this paper we denote by $c$ a general constant larger than one. Different occurences from line to line will be still denoted by $c$, while special occurrences will be denoted by $c_{1}, c_{2}, \tilde{c}$ and so on. Relevant dependencies on parameters will be emphasised using parentheses, i.e., $c_{1}= c_{1}(n,p)$ means that $c_{1}$ depends on $n,p$. In a similar fashion, by $o(\mathcal{l})$ we denote a quantity depending on the parameter $\mathcal{l}$ such that $o(\mathcal{l})\to 0$ when $\mathcal{l}$ goes to a relevant limit (typically $\mathcal{l} \to 0$ or $\mathcal{l} \to \infty$); also in this case the expression of $o(\mathcal{l})$ might vary from line to line and relevant dependencies are emphasized. We denote by $ B_{\rr}(x_{0}):=\{x \in \mathbb{R}^{n}\colon \snr{x-x_{0}}<\rr\}$ the open ball with center $x_{0}\in \mathbb{R}^{n}$ and radius $\rr>0$; when no ambiguity arises, we omit denoting the center as follows: $B_{\rr} \equiv B_{\rr}(x_{0})$. Very often, when not otherwise stated, different balls in the same context will share the same center. When considering function spaces of vector valued maps, such as $L^{p}(\Omega,\mathbb{R}^k)$, $W^{1,p}(\Omega,\mathbb{R}^{k})$ etc, we often abbreviate as $L^{p}(\Omega)$, $W^{1,p}(\Omega)$ and so on; the meaning will be clear from the context. Given any differentiable map $G: \Omega\times \mathbb{R}^{n}\to \mathbb{R}$, with $\partial_{z}G(x,z)$ we mean the derivative of $G$ with respect to the $z$ variable and by $\partial_{x}G(x,z)$ the derivative of $G$ in the $x$-variable, while, by $\partial^{2}_{z}G(x,z)$ we denote the second derivative in $z$ of $G$ and by $\partial^{2}_{x,z}G(x,z)$ the mixed one. For the sake of clarity, we shall adopt the shorthand notation
\begin{flalign*}
\texttt{data}_{\texttt{q}}:=\left(n,\nu,L,p,q,\alpha,\nr{\psi}_{W^{1+\alpha,q}(\Omega)}\right),\quad \texttt{data}_{\infty}:=\left(n,\nu,L,p,q,s,\mathcal{A}_{h,\psi},\nr{\psi}_{W^{2,\infty}(\Omega)}\right),
\end{flalign*}
see Sections \ref{pt1}-\ref{pt2} for more details on all the quantities involved.
\subsection{Auxiliary results} 
We start with some elementary facts on Sobolev functions. For a map $f \colon \Omega \to \mathbb{R}^{k}$, $k\ge 1$ and a vector $h \in \mathbb{R}^n$, we denote by $\tau_{h}\colon L^1(\Omega,\mathbb{\er}^{k}) \to L^{1}(\Omega_{|h|},\mathbb{R}^{k})$ the standard finite difference operator pointwise defined as
\begin{flalign*}
\tau_{h}f(x):=f(x+h)-f(x) \quad \mbox{for a.e.} \ x\in \Omega_{\snr{h}},
\end{flalign*}
where $\Omega_{|h|}:=\{x \in \Omega \, : \, 
\dist(x, \partial \Omega) > |h|\}$. It is clear that the finite difference operator is strictly connected with the weak differentiability of a function.
\begin{lemma}\label{l3} Let $B_{\rr}\subset B_{r}\Subset \Omega$ be two balls, $h\in \mathbb{R}^{n}$ be a vector with $\snr{h}<\frac{1}{4}\min\{r-\rr,\dist(\partial B_{r},\partial \Omega)\}$ and $f\in W^{1,t}(\Omega,\mathbb{R}^{k})$ for some $t\in [1,\infty)$. Then
\begin{flalign*}
\int_{B_{\rr}}\snr{\tau_{h}f}^{t} \ \dx\le \snr{h}^{t}\int_{B_{r}}\snr{Df}^{t} \ \dx.
\end{flalign*}
\end{lemma}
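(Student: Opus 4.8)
This is the standard finite-difference estimate for Sobolev functions, so the plan is to reduce to the scalar case and then to a one-dimensional computation via the fundamental theorem of calculus. First I would note that it suffices to treat $k=1$, since both sides of the claimed inequality split componentwise: $\snr{\tau_h f}^t = \left(\sum_{i=1}^k \snr{\tau_h f_i}^2\right)^{t/2}$ and $\snr{Df}^t$ likewise, and an application of the scalar estimate to each $f_i$ together with convexity of $s\mapsto s^{t/2}$ (i.e. the elementary inequality for the $\ell^2$ versus $\ell^1$ structure) yields the vector-valued bound; alternatively one can simply observe that $\snr{\tau_h f(x)}^t \le \left(\int_0^1 \snr{Df(x+\lambda h)}\,\snr{h}\,\D\lambda\right)^t$ directly in the vector-valued setting, which is cleaner.

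\textbf{Main argument.} Assume first $f\in C^\infty(\Omega)\cap W^{1,t}(\Omega)$. For $x\in B_\rr$ and $\snr{h}$ as in the hypothesis, the whole segment $\{x+\lambda h : \lambda\in[0,1]\}$ lies in $B_r$ (and well inside $\Omega$), because $\snr{h}<\tfrac14(r-\rr)$. Then
\begin{flalign*}
\tau_h f(x) = f(x+h)-f(x) = \int_0^1 \frac{\D}{\D\lambda} f(x+\lambda h)\,\D\lambda = \int_0^1 Df(x+\lambda h)\cdot h\,\D\lambda,
\end{flalign*}
so by Cauchy--Schwarz in $\mathbb{R}^n$ and Jensen's inequality applied to the probability measure $\D\lambda$ on $[0,1]$,
\begin{flalign*}
\snr{\tau_h f(x)}^t \le \snr{h}^t\left(\int_0^1 \snr{Df(x+\lambda h)}\,\D\lambda\right)^t \le \snr{h}^t\int_0^1 \snr{Df(x+\lambda h)}^t\,\D\lambda.
\end{flalign*}
Integrating over $x\in B_\rr$ and using Tonelli's theorem to swap the order of integration,
\begin{flalign*}
\int_{B_\rr}\snr{\tau_h f}^t\,\dx \le \snr{h}^t\int_0^1\!\!\int_{B_\rr}\snr{Df(x+\lambda h)}^t\,\dx\,\D\lambda = \snr{h}^t\int_0^1\!\!\int_{B_\rr+\lambda h}\snr{Df(y)}^t\,\D y\,\D\lambda,
\end{flalign*}
and since $B_\rr+\lambda h\subset B_r$ for every $\lambda\in[0,1]$ by the constraint on $\snr{h}$, each inner integral is bounded by $\int_{B_r}\snr{Df}^t\,\D y$; the remaining $\int_0^1\D\lambda=1$ gives the claim.

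\textbf{Passage to the general case and the one delicate point.} For general $f\in W^{1,t}(\Omega,\mathbb{R}^k)$ one approximates: pick an open set $U$ with $B_r\Subset U\Subset\Omega$ and mollify, $f_\eps=f*\rho_\eps$, which for $\eps$ small is smooth on a neighbourhood of $\overline{B_r}$ with $f_\eps\to f$ in $W^{1,t}(U)$; apply the inequality just proved to $f_\eps$ on the pair $B_\rr\subset B_r$ and pass to the limit $\eps\to0$ (along a subsequence if one wants a.e. convergence of the integrands, though $L^t$ convergence of $\tau_h f_\eps\to\tau_h f$ and of $Df_\eps\to Df$ on the relevant balls is enough to pass to the limit in both integrals directly). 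The only thing requiring a little care is that the finite-difference translation and the segments $x+\lambda h$ stay within the domain on which we have smoothness and $W^{1,t}$ control — this is exactly what the quantitative bound $\snr{h}<\tfrac14\min\{r-\rr,\dist(\partial B_r,\partial\Omega)\}$ guarantees, with room to spare, so no genuine obstacle arises; the estimate is essentially soft. I expect the ``hard part'' to be purely bookkeeping: making sure the chain $B_\rr \rightsquigarrow B_\rr+\lambda h \subset B_r \subset U \Subset \Omega$ is respected uniformly in $\lambda\in[0,1]$ and in the mollification parameter.
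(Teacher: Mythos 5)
Your argument is correct and is the standard proof of this classical finite-difference estimate (fundamental theorem of calculus along the segment, Jensen, Tonelli, then mollification); the paper states Lemma \ref{l3} without proof as a well-known fact, so there is nothing to compare it against beyond confirming that your chain of inclusions $B_{\rr}+\lambda h\subset B_{r}\Subset\Omega$ is exactly what the hypothesis on $\snr{h}$ is designed to guarantee.
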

Controlling a suitable Lebesgue norm of the finite difference of a function implies weak differentiability.
\begin{lemma}\label{l8}
Let $B_{\rr}\subset B_{r}\Subset \Omega$ be two balls. If $f\in L^{t}(\Omega,\mathbb{R}^{k})$, $t\in (1,\infty)$, is a map such that
\begin{flalign*}
\int_{B_{\rr}}\snr{\tau_{h}f}^{t} \ \dx \le S^{t}\snr{h}^{t}
\end{flalign*}
for all vectors $h\in \mathbb{R}^{n}$ with $\snr{h}<\frac{1}{4}\min\{r-\rr,\dist(\partial B_{r},\partial \Omega)\}$, then
\begin{flalign*}
f\in W^{1,t}(B_{\rr},\mathbb{R}^{k})\quad \mbox{and}\quad \nr{Df}_{L^{t}(B_{\rr})}\le S.
\end{flalign*}
\end{lemma}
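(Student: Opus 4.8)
The plan is to exploit the classical characterization of Sobolev spaces via difference quotients, which for $t \in (1,\infty)$ works cleanly because bounded sequences in $L^t(B_\rr)$ have weakly convergent subsequences. First I would fix a coordinate direction $e_s$, $s \in \{1,\dots,n\}$, and consider the sequence of difference quotients $\Delta_{s,h}f := \tau_{he_s}f / h$ for a sequence of scalars $h \to 0$ with $|h|$ small enough that the hypothesis applies on $B_\rr$. The assumed bound gives $\nr{\Delta_{s,h}f}_{L^t(B_\rr)} \le S$ uniformly in $h$. Since $L^t(B_\rr,\mathbb{R}^k)$ is reflexive for $t \in (1,\infty)$, along a subsequence $h_j \to 0$ we extract a weak limit $\Delta_{s,h_j}f \rightharpoonup g_s$ in $L^t(B_\rr,\mathbb{R}^k)$, and by weak lower semicontinuity of the norm, $\nr{g_s}_{L^t(B_\rr)} \le S$.

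Next I would identify $g_s$ as the weak derivative $D_s f$ on $B_\rr$. This is the standard test-function computation: for any $\varphi \in C_c^\infty(B_\rr, \mathbb{R}^k)$, once $|h_j|$ is smaller than $\dist(\supp \varphi, \partial B_\rr)$, one has the discrete integration-by-parts identity
\begin{flalign*}
\int_{B_\rr} \Delta_{s,h_j} f \cdot \varphi \ \dx = -\int_{B_\rr} f \cdot \Delta_{s,-h_j}\varphi \ \dx .
\end{flalign*}
Letting $j \to \infty$: the left-hand side converges to $\int_{B_\rr} g_s \cdot \varphi \ \dx$ by weak convergence, while on the right-hand side $\Delta_{s,-h_j}\varphi \to -D_s\varphi$ uniformly (as $\varphi$ is smooth with compact support), so the right-hand side converges to $\int_{B_\rr} f \cdot D_s \varphi \ \dx$. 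Hence $\int_{B_\rr} g_s \cdot \varphi \ \dx = \int_{B_\rr} f \cdot D_s\varphi \ \dx$ for all such $\varphi$, which says precisely that the distributional derivative $D_s f$ equals $-g_s \in L^t(B_\rr,\mathbb{R}^k)$ — or rather, matching signs correctly, that $D_s f = g_s$. Doing this for each $s = 1,\dots,n$ yields $f \in W^{1,t}(B_\rr,\mathbb{R}^k)$ with $|Df| \le \bigl(\sum_s |g_s|^2\bigr)^{1/2}$ a.e.

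Finally, for the quantitative bound I would avoid handling directions separately and instead test directly with $h = |h|\omega$ for unit vectors $\omega$, or simply note that from $D_sf = g_s$ and the weak lower semicontinuity above, applied to the full (non-coordinate) difference quotients, one gets $\nr{Df}_{L^t(B_\rr)} \le \liminf_j \nr{\tau_{h_j}f/|h_j|}_{L^t(B_\rr)} \le S$; the hypothesis is stated for all vectors $h$, so picking $h_j \to 0$ along any direction and using that the difference quotient in direction $\omega$ tends weakly to $\partial_\omega f = Df \cdot \omega$ suffices. The main obstacle — really the only subtlety — is the extraction of the weak limit and the justification that the limit is direction-independent in the sense needed for the sharp constant $S$ (rather than a dimension-dependent multiple of it); this is handled by working with the genuine vector difference quotients $\tau_h f/|h|$ rather than coordinate ones, so that the bound passes to the limit without loss. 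Everything else is routine, and no regularity of $f$ beyond $L^t$ is used.
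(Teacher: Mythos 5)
The paper does not actually prove Lemma \ref{l8}: it is quoted as a classical fact, and what you propose is precisely the standard proof. The core of your argument is correct. The uniform bound on the coordinate difference quotients, reflexivity of $L^{t}$ for $t\in(1,\infty)$, extraction of a weak limit $g_{s}$, and its identification via discrete integration by parts against $\varphi\in C^{\infty}_{c}(B_{\rr})$ all go through (your sign bookkeeping in the middle is off --- one has $\Delta_{s,-h_{j}}\varphi\to D_{s}\varphi$, not $-D_{s}\varphi$, so the identity in the limit reads $\int_{B_{\rr}}g_{s}\cdot\varphi \ \dx=-\int_{B_{\rr}}f\cdot D_{s}\varphi \ \dx$, which is exactly the statement $g_{s}=D_{s}f$; you land on the right conclusion anyway). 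This yields $f\in W^{1,t}(B_{\rr},\mathbb{R}^{k})$ with $\nr{D_{s}f}_{L^{t}(B_{\rr})}\le S$ for every $s$, hence $\nr{Df}_{L^{t}(B_{\rr})}\le c(n,t)S$, which is all that is needed where the lemma is invoked (the passage from \eqref{47} to \eqref{51} in Proposition \ref{regst}).

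The one step that does not hold up is your final paragraph, i.e.\ the recovery of the \emph{exact} constant $S$ for the full gradient. Working with the vector difference quotients $\tau_{\snr{h}\omega}f/\snr{h}$ and weak lower semicontinuity gives $\nr{Df\cdot\omega}_{L^{t}(B_{\rr})}\le S$ for each \emph{fixed} unit vector $\omega$, and the supremum over $\omega$ of these directional norms is in general strictly smaller than $\nr{\snr{Df}}_{L^{t}(B_{\rr})}$; the two coincide essentially only when $Df(x)$ points in a fixed direction for a.e.\ $x$. Concretely, for $f(x)=\max(x_{1},x_{2})$ in $n=2$ one has $\int_{B}\snr{Df\cdot\omega}^{t}\ \dx=\tfrac{1}{2}\snr{B}(\snr{\omega_{1}}^{t}+\snr{\omega_{2}}^{t})$, whose supremum over unit $\omega$ is a fixed fraction of $\snr{B}=\int_{B}\snr{Df}^{t}\ \dx$; consequently the hypothesis of the lemma holds (for small $\snr{h}$) with an $S$ strictly below $\nr{Df}_{L^{t}}$, and no argument can upgrade the directional bounds to the Euclidean one without a dimensional factor. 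So the honest output of the difference-quotient method is $\nr{Df}_{L^{t}(B_{\rr})}\le c(n,t)S$; the constant-$S$ form in which the lemma is stated should be read with that understanding, and nothing in the paper is affected since only the qualitative membership and a finite bound are used.
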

The next result explains how to control translations.
\begin{lemma}\label{l2}
Let $B_{\rr}\subset B_{r}\Subset \Omega$ be two balls, $h\in \mathbb{R}^{n}$ be a vector so that $\snr{h}<\frac{1}{4}\min\{r-\rr,\dist(\partial B_{r},\partial \Omega)\}$ and $f\in L^{t}(\Omega,\mathbb{R}^{k})$ for some $t\in [1,\infty)$. Then
\begin{flalign*}
\int_{B_{\rr}}\left[\snr{f(x)}^{2}+\snr{f(x+h)}^{2}\right]^{\frac{t}{2}} \ \dx \le c(n,t)\int_{B_{r}}\snr{f(x)}^{t} \ \dx.
\end{flalign*}
\end{lemma}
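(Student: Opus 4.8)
The statement to prove is Lemma \ref{l2}, a pointwise/integral estimate comparing the $L^t$-norm of the pair $(f(x), f(x+h))$ on a smaller ball with the $L^t$-norm of $f$ on a larger ball.

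\medskip

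The plan is to reduce everything to two elementary facts: first, the pointwise inequality $(a^2+b^2)^{t/2}\le c(t)(a^t+b^t)$ valid for $a,b\ge 0$ and any $t>0$ (which follows, for instance, from the equivalence of the $\ell^2$ and $\ell^t$ norms on $\mathbb{R}^2$, or by splitting into the cases $a\ge b$ and $a<b$); and second, a plain change of variables $y=x+h$ to relocate the integral $\int_{B_{\rr}}\snr{f(x+h)}^t\dx$ onto the translated ball $B_{\rr}+h$, which, thanks to the hypothesis $\snr{h}<\tfrac14\min\{r-\rr,\dist(\partial B_r,\partial\Omega)\}$, is contained in $B_r$ (indeed $B_{\rr}+h\subset B_{\rr+\snr{h}}\subset B_r$). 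Both pieces are routine; there is essentially no hard part here, the lemma being one of the standard "miscellanea" used repeatedly later.

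\medskip

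Concretely, I would first apply the pointwise inequality inside the integral to get
\[
\int_{B_{\rr}}\left[\snr{f(x)}^{2}+\snr{f(x+h)}^{2}\right]^{\frac{t}{2}}\dx\le c(t)\int_{B_{\rr}}\snr{f(x)}^{t}\dx+c(t)\int_{B_{\rr}}\snr{f(x+h)}^{t}\dx.
\]
The first term on the right is already bounded by $c(t)\int_{B_r}\snr{f(x)}^t\dx$ since $B_{\rr}\subset B_r$. For the second term, the substitution $y=x+h$ gives $\int_{B_{\rr}}\snr{f(x+h)}^t\dx=\int_{B_{\rr}+h}\snr{f(y)}^t\dy$, and since $B_{\rr}+h\subset B_r$ (using the smallness of $\snr{h}$ and that the balls are concentric), this is again at most $\int_{B_r}\snr{f(y)}^t\dy$. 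Note also that the condition $\snr{h}<\tfrac14\dist(\partial B_r,\partial\Omega)$ ensures $B_{\rr}+h\subset\Omega$, so the integrand makes sense there since $f\in L^t(\Omega,\mathbb{R}^k)$. Summing the two contributions yields the claim with constant $c(n,t)$ — in fact a constant depending only on $t$ suffices, but the statement allows dependence on $n$ as well, so there is nothing more to check.

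\medskip

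The only point requiring a word of care is the inclusion $B_{\rr}+h\subset B_r$: since the two balls share the same center $x_0$, any $y\in B_{\rr}+h$ satisfies $\snr{y-x_0}\le \snr{y-x_0-h}+\snr{h}<\rr+\snr{h}<\rr+(r-\rr)=r$, so $y\in B_r$; here I used only $\snr{h}<r-\rr$, which is weaker than the stated hypothesis. Everything else is bookkeeping, so I do not anticipate any genuine obstacle in writing out the full proof.
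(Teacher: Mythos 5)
Your argument is correct and is the standard one (the paper states this lemma without proof, as a routine auxiliary fact): the pointwise bound $(a^2+b^2)^{t/2}\le c(t)(a^t+b^t)$ plus the translation $y=x+h$ and the inclusion $B_{\varrho}+h\subset B_r$ give the claim. Nothing is missing.
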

We now recall a few basic facts concerning fractional Sobolev spaces. \begin{definition}\label{fra1def}
Let $\alpha \in (0,1)$, $p \in [1, \infty)$, $k \in \mathbb{N}$, and let $\Omega \subset \mathbb{R}^n$ be an open subset with $n\ge 2$ (we allow for the case $\Omega =\mathbb{R}^{n}$). The fractional Sobolev space $W^{\alpha ,p}(\Omega,\mathbb{R}^k )$ is defined prescribing that $f \colon \Omega \to \mathbb{R}^k$ belongs to  $W^{\alpha ,p}(\Omega,\mathbb{R}^k )\equiv W^{\alpha ,p}(\Omega)$ iff the following Gagliardo type norm is finite:
\begin{flalign*}
\nr{f}_{W^{\alpha,p}(\Omega)}&:=\nr{f}_{L^{p}(\Omega)}+\left(\int_{\Omega} \int_{\Omega}  
\frac{\snr{f(x)
- f(y) }^{p}}{\snr{x-y}^{n+\alpha p}} \ \dx \, \dy \right)^{1/p}=:\nr{f}_{L^{p}(\Omega)}+[f]_{\alpha,p;\Omega}.
\end{flalign*}
Accordingly, in the case $\alpha = [\alpha]+\{\alpha\}\in \en + (0,1)>1$, we say that $f\in W^{\alpha ,p}(\Omega,\mathbb{R}^k )$ iff the following quantity is finite
\begin{flalign*}
\nr{f}_{W^{\alpha,p}(\Omega)}:=\nr{f}_{W^{[\alpha],p}(\Omega)}+[D^{[\alpha]}f]_{\{\alpha\},p;\Omega}.
\end{flalign*}
The local variant $W^{\alpha ,p}_{loc}(\Omega,\er^k )$ is defined by requiring that $f \in W^{\alpha ,p}_{loc}(\Omega,\mathbb{R}^k )$ iff $f \in W^{\alpha ,p}(\tilde{\Omega},\mathbb{R}^k)$ for every open subset $\tilde{\Omega} \Subset \Omega$. 
\end{definition}

\begin{definition}\label{fra2def}
Let $\alpha \in (0,1)$, $p\in [1, \infty)$, $k \in \mathbb{R}^{n}$, and let $\Omega \subset \mathbb{R}^n$ be an open subset with $n\geq 2$. The Nikol'skii space $N^{\alpha,p}(\Omega,\mathbb{R}^k )$ is defined prescribing that $f \in N^{\alpha,p}(\Omega,\mathbb{R}^k )$ iff 
$$\nr{f}_{N^{\alpha,p}(\Omega )} :=\nr{f}_{L^{p}(\Omega)} + \left(\sup_{\snr{h}\not=0}\, \int_{\Omega_{|h|}} 
\frac{\snr{f(x+h)
- f(x) }^{p}}{|h|^{\alpha p}} \ dx  \right)^{1/p}\;.$$
The local variant $N^{\alpha,p}_{\loc}(\Omega,\mathbb{R}^{k} )$ is defined by requiring that $f \in N^{\alpha,p}_{loc}(\Omega,\mathbb{R}^{k} )$ iff $f \in N^{\alpha,p}(\tilde \Omega,\mathbb{R}^{k})$ for every open subset $\tilde{\Omega} \Subset \Omega$.
\end{definition}
Moreover we have that 
\begin{flalign}\label{33}
W^{\alpha ,p}(\Omega,\er^k)\subsetneqq N^{\alpha,p}(\Omega,\er^k)\subsetneqq
W^{\beta,p}(\Omega,\er^k)\quad \mbox{for every} \ \ \beta<\alpha,
\end{flalign}
holds for sufficiently regular domains $\Omega$. Notice that, given any ball $B_{\rr}\Subset \Omega$ such that $\dist(\partial B_{\rr},\partial{\Omega})>0$, a function $f\in N^{\alpha,q}(\Omega,\mathbb{R}^{k})$ and a vector $h\in \mathbb{R}^{n}$ with $\snr{h}<\frac{1}{4}\dist(\partial B_{\rr},\partial{\Omega})$, than Definition \ref{fra2def} and $\eqref{33}$ immediately imply that
\begin{flalign}\label{0}
\left(\int_{B_{\rr}}\snr{f(x+h)-f(x)}^{p} \ \dx\right)^{\frac{1}{p}}\le& \snr{h}^{\alpha}\left(\sup_{\snr{h}\not =0}\int_{\Omega_{\snr{h}}}\frac{\snr{f(x+h)-f(x)}^{p}}{\snr{h}^{\alpha p}} \ \dx\right)^{\frac{1}{p}}\nonumber \\
\le& \snr{h}^{\alpha}\nr{f}_{N^{\alpha,p}(\Omega)}\le c(n,p)\snr{h}^{\alpha}\nr{f}_{W^{\alpha,p}(\Omega)}.
\end{flalign}
A local, quantified version of \eqref{33} in the next lemma. 
\begin{lemma}\label{l4}\emph{\cite{avkumi}}
Let $B_{r}\Subset \mathbb{R}^{n}$ be a ball with $r\le 1$, $f\in L^{p}(B_{r},\mathbb{R}^{k})$, $p>1$ and assume that, for $\alpha \in (0,1]$, $S\ge 1$ and concentric balls $B_{\rr}\Subset B_{r}$, there holds
\begin{flalign*}
\nr{\tau_{h}f}_{L^{p}(B_{\rr})}\le S\snr{h}^{\alpha}\quad \mbox{for every} \ h\in \mathbb{R}^{n} \ \mbox{with} \ 0<\snr{h}\le \frac{r-\rr}{K}, \ \mbox{where} \ K\ge 1.
\end{flalign*}
Then $f\in W^{\beta,p}(B_{\rr},\mathbb{R}^{k})$ whenever $\beta\in (0,\alpha )$ and
\begin{flalign*}
\nr{f}_{W^{\beta,p}(B_{\rr})}\le\frac{c}{(\alpha -\beta)^{1/p}}
\left(\frac{r-\rr}{K}\right)^{\alpha -\beta}S+c\left(\frac{K}{r-\rr}\right)^{n/p+\beta} \nr{f}_{L^{p}(B_{r})}
\end{flalign*}
holds, where $c= c(n,p)$. 
\end{lemma}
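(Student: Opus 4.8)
\textbf{Proof proposal for Lemma \ref{l4}.}

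The plan is to derive the fractional Sobolev estimate by carefully summing up the finite-difference hypothesis over a dyadic (or rather, geometric) scale of increments, localising at each stage via a cut-off function to stay inside the ball where the difference quotient bound is available. First I would fix $\beta \in (0,\alpha)$ and a cut-off $\eta \in C_c^\infty(B_{(r+\rr)/2})$ with $\eta \equiv 1$ on $B_{\rr}$, $0 \le \eta \le 1$ and $\snr{D\eta} \lesssim (r-\rr)^{-1}$. The quantity to estimate is the Gagliardo seminorm $[f]_{\beta,p;B_{\rr}}^p = \int_{B_{\rr}}\int_{B_{\rr}} \snr{f(x)-f(y)}^p \snr{x-y}^{-n-\beta p} \dx\dy$; by the change of variables $y = x+h$ and Fubini, this is comparable to $\int_{\snr{h}\le 2r} \snr{h}^{-n-\beta p} \big(\int_{B_{\rr}} \snr{\tau_h f(x)}^p \dx\big) \dh$. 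The integral over the region $\snr{h} > (r-\rr)/K$ is trivially controlled by $\nr{f}_{L^p(B_r)}^p$ times $\int_{\snr{h}>(r-\rr)/K}\snr{h}^{-n-\beta p}\dh \simeq (\beta p)^{-1}\big(K/(r-\rr)\big)^{\beta p}$, which produces the second term of the asserted bound.

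For the remaining region $0 < \snr{h} \le (r-\rr)/K$ I would invoke the hypothesis $\nr{\tau_h f}_{L^p(B_{\rr})} \le S\snr{h}^\alpha$ directly, so that the contribution is bounded by
\begin{flalign*}
S^p \int_{0 < \snr{h}\le (r-\rr)/K} \snr{h}^{-n-\beta p + \alpha p}\dh \simeq \frac{S^p}{(\alpha-\beta)p}\left(\frac{r-\rr}{K}\right)^{(\alpha-\beta)p},
\end{flalign*}
using that $-n - \beta p + \alpha p > -n$ so the integral converges at the origin, and the factor $(\alpha-\beta)^{-1}$ emerges from integrating $\snr{h}^{(\alpha-\beta)p - n}$ in polar coordinates up to radius $(r-\rr)/K$. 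Taking $p$-th roots of the sum of the two contributions and adding $\nr{f}_{L^p(B_r)}$ to account for the full $W^{\beta,p}$ norm (not just the seminorm) gives precisely the stated inequality, with the constant $c$ depending only on $n$ and $p$ through the surface measure of the unit sphere and the Fubini/change-of-variables constants.

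The main technical point to get right is the bookkeeping that reduces the double Gagliardo integral over $B_{\rr} \times B_{\rr}$ to a single integral in the increment $h$ with the correct domain of integration: one must check that for $x, y \in B_{\rr}$ the increment $h = y - x$ indeed satisfies $\snr{h} < 2\rr \le 2r$, and that after splitting at the threshold $(r-\rr)/K$ both pieces are legitimately estimated — the small-$h$ piece by the hypothesis, the large-$h$ piece by the crude bound $\snr{\tau_h f(x)}^p \le 2^{p-1}(\snr{f(x+h)}^p + \snr{f(x)}^p)$ together with Lemma \ref{l3}-type translation control (or simply $\int_{B_{\rr}}\snr{\tau_h f}^p \le 2^p \nr{f}_{L^p(B_r)}^p$ once $\snr{h}$ is small enough that the translated ball stays in $B_r$, which is guaranteed by $K \ge 1$ and $\snr{h} \le r-\rr$). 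The dependence of the exponents on $(\alpha-\beta)$ and $(r-\rr)/K$ is then read off transparently from the two elementary radial integrals, so there is no genuine obstacle beyond careful tracking of these constants; the result is essentially the quantitative form of the embedding $N^{\alpha,p}\hookrightarrow W^{\beta,p}$ localised to balls.
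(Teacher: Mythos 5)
The paper gives no proof of this lemma (it is imported from \cite{avkumi}), so your argument must be judged against the standard proof of that result, and your strategy is exactly the right one: rewrite the Gagliardo seminorm over $B_{\rr}\times B_{\rr}$ as an integral in the increment $h$, split at $\snr{h}=(r-\rr)/K$, use the hypothesis on the near region and a crude bound on the far region. The near-region computation is correct and yields $\frac{c}{(\alpha-\beta)^{1/p}}\left(\frac{r-\rr}{K}\right)^{\alpha-\beta}S$ with $c=c(n,p)$. (The cut-off $\eta$ you introduce at the outset is never used and can simply be dropped.)

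There is one small but genuine bookkeeping issue in the far region. As written, you estimate $\int_{\snr{h}>(r-\rr)/K}\snr{h}^{-n-\beta p}\ \d h\simeq(\beta p)^{-1}\left(\frac{K}{r-\rr}\right)^{\beta p}$, which after taking $p$-th roots leaves a factor $(\beta p)^{-1/p}$; this blows up as $\beta\to 0$ and is therefore not admissible in a constant that the statement requires to depend only on $(n,p)$. The fix is the one that also explains the exponent $n/p+\beta$ appearing in the statement: on the set $\{\snr{x-y}\ge(r-\rr)/K\}$ bound the kernel pointwise by $\snr{x-y}^{-n-\beta p}\le\left(\frac{K}{r-\rr}\right)^{n+\beta p}$, so that the far contribution is at most $2^{p}\snr{B_{\rr}}\left(\frac{K}{r-\rr}\right)^{n+\beta p}\nr{f}_{L^{p}(B_{\rr})}^{p}$, with $\snr{B_{\rr}}^{1/p}\le c(n)$ since $\rr\le 1$. (Alternatively, your radial integral can be salvaged using the boundedness of the domain, $\snr{h}\le 2\rr\le 2$, which gives $\int_{(r-\rr)/K}^{2\rr}t^{-\beta p-1}\ \d t\le 2\left(\frac{K}{r-\rr}\right)^{\beta p+1}$, again with a $\beta$-free constant.) With either correction the argument is complete and delivers the stated inequality.
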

The next is the embedding theorem for fractional Sobolev spaces.
\begin{lemma}\label{l9}\emph{\cite{pala}}
Let $f\in W^{\alpha,p}(\Omega,\mathbb{R}^{k})$, with $p\ge 1$, $\alpha\in (0,1]$ such that $\alpha p<n$ and let $\Omega\subset \mathbb{R}^{n}$ be a bounded, Lipschitz domain. Then 
\begin{itemize}
    \item[-]$\alpha p<n\Rightarrow f\in L^{\frac{n p}{n-\alpha p}}(\Omega,\mathbb{R}^{k})$ with $\nr{f}_{L^{\frac{np}{n-\alpha p}}(\Omega)}\le c\nr{f}_{W^{\alpha,p}(\Omega)}$;
    \item[-]$\alpha p=n\Rightarrow f\in L^{t}(\Omega,\mathbb{R}^{k})$ for all $t\in [p,\infty)$, with $\nr{f}_{L^{t}(\Omega)}\le c\nr{f}_{W^{\alpha,p}(\Omega)}$;
    \item[-]$\alpha p>n\Rightarrow f\in C^{0,\frac{\alpha p-n}{p}}(\Omega,\mathbb{R}^{k})$ with $\nr{f}_{0,\frac{\alpha p-n}{p};\Omega}\le c\nr{f}_{W^{\alpha,p}(\Omega)}$;
\end{itemize}
with $c$ depending at the most from $(n,\alpha,p,t,[\partial \Omega]_{0,1},\diam(\Omega))$.
\end{lemma}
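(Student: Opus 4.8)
The plan is to reduce the three embeddings to statements on the whole of $\mathbb{R}^{n}$ by means of a bounded extension operator, and then to exploit the representation of fractional Sobolev functions via Riesz potentials. Since $\Omega$ is a bounded Lipschitz domain, there is a linear extension $E\colon W^{\alpha,p}(\Omega)\to W^{\alpha,p}(\mathbb{R}^{n})$ with $(Ef)|_{\Omega}=f$ and $\nr{Ef}_{W^{\alpha,p}(\mathbb{R}^{n})}\le c\nr{f}_{W^{\alpha,p}(\Omega)}$, the constant $c$ depending only on $n,\alpha,p,[\del\Omega]_{0,1}$ and $\diam(\Omega)$ -- this is exactly where the asserted dependence of the embedding constant originates. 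After this reduction it is enough to prove the global embeddings $W^{\alpha,p}(\mathbb{R}^{n})\hookrightarrow L^{np/(n-\alpha p)}$, resp.\ $\hookrightarrow L^{t}$ for every $t<\infty$, resp.\ $\hookrightarrow C^{0,\alpha-n/p}$, according to the sign of $\alpha p-n$, and then to restrict back to $\Omega$; in the borderline and supercritical cases one also invokes $L^{t}(\Omega)\hookrightarrow L^{p}(\Omega)$, valid because $\snr{\Omega}<\infty$, to recover the full range of exponents in the statement. The endpoint $\alpha=1$ is the classical Sobolev--Morrey theorem, so I may assume $\alpha\in(0,1)$.

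For the subcritical case $\alpha p<n$ and $p>1$, I would use the well-known equivalence $[f]_{\alpha,p;\mathbb{R}^{n}}\simeq\nr{(-\Delta)^{\alpha/2}f}_{L^{p}(\mathbb{R}^{n})}$ (valid for $\alpha\in(0,1)$, $1<p<\infty$), write $f=I_{\alpha}g$ with $g:=(-\Delta)^{\alpha/2}f$ and $I_{\alpha}g(x):=c_{n,\alpha}\int_{\mathbb{R}^{n}}\snr{x-y}^{\alpha-n}g(y)\d y$ the Riesz potential of order $\alpha$, and then apply the Hardy--Littlewood--Sobolev inequality $\nr{I_{\alpha}g}_{L^{p^{*}}(\mathbb{R}^{n})}\le c\nr{g}_{L^{p}(\mathbb{R}^{n})}$ with $p^{*}=np/(n-\alpha p)$; this gives $\nr{f}_{L^{p^{*}}}\le c[f]_{\alpha,p;\mathbb{R}^{n}}\le c\nr{f}_{W^{\alpha,p}(\mathbb{R}^{n})}$. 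For $p=1$, where Hardy--Littlewood--Sobolev is only of weak type, I would argue directly: by density it suffices to treat $f\in C^{\infty}_{c}(\mathbb{R}^{n})$, and splitting $f(x)$ off its integral average on $B_{\varrho}(x)$ followed by Hölder's inequality yields, for every $\varrho>0$, the pointwise bound $\snr{f(x)}\le c\varrho^{\alpha}g(x)+c\varrho^{-n}\nr{f}_{L^{1}(\mathbb{R}^{n})}$ with $g(x):=\big(\int_{\mathbb{R}^{n}}\snr{f(x)-f(y)}^{p}\snr{x-y}^{-n-\alpha p}\d y\big)^{1/p}$; optimising in $\varrho$ produces a first, non-sharp gain of integrability, which a dyadic truncation and summation argument as in \cite{pala} then upgrades to the sharp $L^{p^{*}}$ bound.

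For the borderline case $\alpha p=n$, the same representation together with the endpoint mapping property $I_{n/p}\colon L^{p}\to\mathrm{BMO}$ (equivalently, local exponential integrability of $f$ via a Trudinger--Moser-type inequality) yields local integrability of $f$ of every finite order, hence $f\in L^{t}(\Omega)$ for all $t\in[p,\infty)$ with the claimed quantitative estimate. For the supercritical case $\alpha p>n$, I would prove the Hölder bound directly: for $x,y$ with $R:=\snr{x-y}$, comparing both $f(x)$ and $f(y)$ with the integral average $f_{B}$ of $f$ over a ball $B$ of radius comparable to $R$ containing $\{x,y\}$, and estimating $\snr{f(x)-f_{B}}$ by the average of $\snr{f(x)-f(\cdot)}$ on $B$ together with Hölder's inequality, gives $\snr{f(x)-f(y)}\le cR^{\alpha-n/p}[f]_{\alpha,p;B}$; bounding in addition $\nr{f}_{L^{\infty}}$ by a ball average plus the seminorm produces the full $C^{0,\alpha-n/p}$ estimate. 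Tracking the constants through these arguments together with the extension constant above gives the dependence $c=c(n,\alpha,p,t,[\del\Omega]_{0,1},\diam(\Omega))$ asserted in the statement.

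The main obstacle is the subcritical estimate at $p=1$, and more precisely the passage from the crude optimisation in $\varrho$ -- which alone yields only $f\in L^{(n+\alpha p)p/n}(\mathbb{R}^{n})$ -- to the sharp Sobolev exponent $np/(n-\alpha p)$: this requires the dyadic truncation and summation scheme and is the technical core of the proof. A secondary, but genuine, point is to make the extension operator in the reduction step quantitative in the Lipschitz character $[\del\Omega]_{0,1}$ and in $\diam(\Omega)$, so that the final constant has precisely the dependencies listed.
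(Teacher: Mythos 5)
The paper offers no proof of Lemma \ref{l9}: it is quoted directly from \cite{pala}, so the only meaningful comparison is with the proofs given there. Your reduction to $\mathbb{R}^{n}$ via a quantitative extension operator for Lipschitz domains, your Morrey--Campanato argument in the supercritical case, and your $p=1$ scheme (pointwise bound by a ball average plus the dyadic truncation) all match the structure of \cite{pala} and are sound as sketches.

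The genuine gap is the subcritical case for $p>1$. The equivalence $[f]_{\alpha,p;\mathbb{R}^{n}}\simeq\nr{(-\Delta)^{\alpha/2}f}_{L^{p}(\mathbb{R}^{n})}$ that you invoke is \emph{false} for $p\neq 2$: the Gagliardo space $W^{\alpha,p}(\mathbb{R}^{n})$ coincides with the Besov space $B^{\alpha}_{p,p}$, whereas $\nr{(-\Delta)^{\alpha/2}f}_{L^{p}}$ is the seminorm of the Bessel-potential space $H^{\alpha,p}=F^{\alpha}_{p,2}$, and these coincide only for $p=2$. One has $W^{\alpha,p}\hookrightarrow H^{\alpha,p}$ for $p\le 2$ but only the \emph{reverse} inclusion for $p\ge 2$; hence for $p>2$ you cannot write $f=I_{\alpha}g$ with $\nr{g}_{L^{p}(\mathbb{R}^{n})}\le c\,[f]_{\alpha,p;\mathbb{R}^{n}}$, and the Hardy--Littlewood--Sobolev step collapses. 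The same unavailable representation underlies your treatment of the borderline case $\alpha p=n$ via $I_{n/p}\colon L^{p}\to \mathrm{BMO}$. The repair is close at hand: the elementary argument you reserve for $p=1$ --- the pointwise bound $\snr{f(x)}\le c\varrho^{\alpha}g(x)+c\varrho^{-n}\nr{f}_{L^{1}}$ (with the $L^{1}$ average replaced by an $L^{p}$ average) followed by the dyadic truncation and summation of \cite{pala} --- works verbatim for every $p\ge 1$ and is precisely how \cite{pala} obtains the sharp exponent; the borderline case then follows, since $\Omega$ is bounded, from the inclusion $W^{\alpha,p}(\Omega)\subset W^{\beta,p}(\Omega)$ for every $\beta<\alpha$ and the subcritical result with $\beta\uparrow\alpha=n/p$, with no need for $\mathrm{BMO}$ or Trudinger--Moser estimates.
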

We refer to \cite{pala} for a survey on this matter. We close this section by reporting some informations on well-known tools in the Calculus of Variations. For constant $\tilde{c}\in [0,1]$ and $z\in \mathbb{R}^{n}$ we introduce the auxiliary vector field
\begin{flalign*}
V_{\tilde{c},t}(z):=(\tilde{c}^{2}+\snr{z}^{2})^{\frac{t-2}{4}}z\qquad t\in \{p,q\},
\end{flalign*}
which turns out to be very convenient in handling the monotonicity properties of certain operators.
\begin{lemma}\label{l1}\emph{\cite{ha}}
For any given $z_{1},z_{2}\in \mathbb{R}^{n}$, $z_{1}\not=z_{2}$ there holds that
\begin{flalign*}
\snr{V_{\tilde{c},t}(z_{1})-V_{\tilde{c},t}(z_{2})}^{2}\sim (\tilde{c}^{2}+\snr{z_{1}}^{2}+\snr{z_{2}}^{2})^{\frac{t-2}{2}}\snr{z_{1}-z_{2}}^{2},
\end{flalign*}
where the constants implicit in "$\sim$" depend only from $(n,t)$.
\end{lemma}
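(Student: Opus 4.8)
The plan is to deduce the two-sided bound from a compactness argument after a homogeneity reduction. First I would note that $V_{\lambda\tilde c,t}(\lambda z)=\lambda^{t/2}V_{\tilde c,t}(z)$ for every $\lambda>0$, so that both $|V_{\tilde c,t}(z_1)-V_{\tilde c,t}(z_2)|^2$ and $(\tilde c^2+|z_1|^2+|z_2|^2)^{(t-2)/2}|z_1-z_2|^2$ are homogeneous of degree $t$ under the joint rescaling $(z_1,z_2,\tilde c)\mapsto(\lambda z_1,\lambda z_2,\lambda\tilde c)$. Since $z_1\neq z_2$ we may therefore rescale so that $\tilde c^2+|z_1|^2+|z_2|^2=1$, and it is then enough to show that
\[
R(z_1,z_2,\tilde c):=\frac{|V_{\tilde c,t}(z_1)-V_{\tilde c,t}(z_2)|^2}{|z_1-z_2|^2}
\]
is bounded above and below by positive constants $c=c(n,t)$ on the compact set $\mathcal{S}:=\{(z_1,z_2,\tilde c)\in\mathbb{R}^n\times\mathbb{R}^n\times[0,\infty):\ \tilde c^2+|z_1|^2+|z_2|^2=1,\ z_1\neq z_2\}$.

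Next I would record the pointwise facts about $V_{\tilde c,t}$ needed for this. The map $(z,\tilde c)\mapsto V_{\tilde c,t}(z)$ is continuous on $\mathbb{R}^n\times[0,\infty)$, the only delicate point being the origin, where one uses $|V_{\tilde c,t}(z)|\le(\tilde c^2+|z|^2)^{t/4}$ together with $t>1$; for each fixed $\tilde c$ the map $z\mapsto V_{\tilde c,t}(z)$ is injective, because $r\mapsto(\tilde c^2+r^2)^{(t-2)/4}r$ is strictly increasing on $[0,\infty)$; and wherever $\tilde c^2+|z|^2>0$ one computes
\[
\mathrm{D}V_{\tilde c,t}(z)=(\tilde c^2+|z|^2)^{\frac{t-2}{4}}\Big[\,\mathrm{Id}+\tfrac{t-2}{2}\,\tfrac{z\otimes z}{\tilde c^2+|z|^2}\,\Big],
\]
a symmetric matrix with eigenvalues $(\tilde c^2+|z|^2)^{(t-2)/4}$ (multiplicity $n-1$) and $(\tilde c^2+|z|^2)^{(t-6)/4}(\tilde c^2+\tfrac{t}{2}|z|^2)$, both comparable to $(\tilde c^2+|z|^2)^{(t-2)/4}$ up to constants depending only on $t$ (here $t>1$ is used). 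Hence $\mathrm{D}V_{\tilde c,t}(z)$ is symmetric positive definite and $\langle\mathrm{D}V_{\tilde c,t}(z)\xi,\xi\rangle\sim(\tilde c^2+|z|^2)^{(t-2)/4}|\xi|^2$ with implicit constants depending on $n,t$.

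With these in hand I would conclude as follows. On the portion of $\mathcal{S}$ that is bounded away from the diagonal $\{z_1=z_2\}$, both the numerator and the denominator of $R$ are continuous and strictly positive (for the numerator, by continuity and injectivity of $V_{\tilde c,t}$), so $R$ is squeezed between positive constants there by compactness. Near a diagonal point $(z^0,z^0,\tilde c_0)\in\overline{\mathcal{S}}$ one has $2|z^0|^2+\tilde c_0^2=1$, hence $(z^0,\tilde c_0)\neq(0,0)$, so $\tilde c^2+|z|^2$ stays bounded below on a neighbourhood of $(z^0,\tilde c_0)$ and there $z\mapsto V_{\tilde c,t}(z)$ is $C^1$; writing $V_{\tilde c,t}(z_1)-V_{\tilde c,t}(z_2)=\big(\int_0^1\mathrm{D}V_{\tilde c,t}(z_2+s(z_1-z_2))\,\mathrm{d}s\big)(z_1-z_2)$ and invoking the two-sided estimate on $\mathrm{D}V_{\tilde c,t}$ above (the integrand being a positive definite matrix of size $\sim(\tilde c^2+|z^0|^2)^{(t-2)/4}\sim1$ near the point) gives $c^{-1}\le R\le c$ on $\mathcal{S}$ in this neighbourhood. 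Covering the compact diagonal of $\overline{\mathcal{S}}$ by finitely many such neighbourhoods, and combining with the previous step, yields the global bound; undoing the rescaling is the assertion. I expect the one genuinely delicate point — the main obstacle — to be exactly this passage to the diagonal: a priori $R$ need not extend continuously across $\{z_1=z_2\}$, and to pin it between positive constants there one must use the non-degeneracy of $\mathrm{D}V_{\tilde c,t}$ together with the fact that the normalization $\tilde c^2+|z_1|^2+|z_2|^2=1$ keeps us away from the single point $z=\tilde c=0$, which (for $t<2$) is where $V_{\tilde c,t}$ loses $C^1$ regularity.
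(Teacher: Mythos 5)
Your proof is correct. Note that the paper does not prove this lemma at all: it is quoted from \cite{ha}, and the route implicitly endorsed there (and standard in this literature, cf.\ also \cite{acfu,giamod}) is the direct one: write $V_{\tilde c,t}(z_1)-V_{\tilde c,t}(z_2)=\bigl(\int_0^1 \mathrm{D}V_{\tilde c,t}(z_2+s(z_1-z_2))\,\mathrm{d}s\bigr)(z_1-z_2)$ for \emph{all} pairs $z_1\neq z_2$, bound the eigenvalues of $\mathrm{D}V_{\tilde c,t}$ exactly as you do, and then control $\int_0^1(\tilde c^2+|z_2+s(z_1-z_2)|^2)^{(t-2)/4}\,\mathrm{d}s$ by $(\tilde c^2+|z_1|^2+|z_2|^2)^{(t-2)/4}$ via the integral estimate that appears in the paper as Lemma \ref{l6} (which is precisely the device that handles segments passing near the origin when $\tilde c=0$ and $t<2$). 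Your argument replaces that quantitative integral lemma by homogeneity plus compactness: you only need the $C^1$ representation locally near the diagonal of the normalized sphere, where the constraint $\tilde c^2+|z_1|^2+|z_2|^2=1$ keeps the segment away from the singular point $z=\tilde c=0$, and you dispose of the off-diagonal region by continuity, injectivity of $z\mapsto V_{\tilde c,t}(z)$, and positivity on a compact set. This is a legitimate trade-off: your proof is softer and avoids Lemma \ref{l6} entirely, at the price of non-explicit constants; the direct proof yields explicit constants and reuses a lemma the paper needs anyway. The one point worth spelling out in your write-up is the lower bound for the averaged matrix, i.e.\ that $\mathcal A:=\int_0^1\mathrm{D}V_{\tilde c,t}(\cdot)\,\mathrm{d}s$ is symmetric with $\lambda_{\min}(\mathcal A)\ge\int_0^1\lambda_{\min}(\mathrm{D}V_{\tilde c,t}(\cdot))\,\mathrm{d}s$, so that $|\mathcal A\xi|\ge\langle\mathcal A\xi,\xi\rangle/|\xi|\ge c^{-1}|\xi|$; you use this implicitly when passing from the eigenvalue bounds to the two-sided estimate on $R$ near the diagonal.
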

Another useful result is the following
\begin{lemma}\label{l6}\emph{\cite{acfu}}
Let $t>-1$, $\tilde{c}\in [0,1]$ and $z_{1},z_{2}\in \mathbb{R}^{n}$ be so that $\tilde{c}+\snr{z_{1}}+\snr{z}_{2}>0$. Then
\begin{flalign*}
\int_{0}^{1}\left[\tilde{c}^{2}+\snr{z_{1}+\lambda(z_{2}-z_{1})}^{2}\right]^{\frac{t}{2}} \ \d\lambda\sim (\tilde{c}^{2}+\snr{z_{1}}^{2}+\snr{z_{2}}^{2})^{\frac{t}{2}},
\end{flalign*}
with constants implicit in "$\sim$" depending only from $t$.
\end{lemma}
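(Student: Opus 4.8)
The plan is to reduce everything to a one–dimensional estimate for the convex quadratic $V(\lambda):=\tilde c^{2}+\snr{z_{1}+\lambda(z_{2}-z_{1})}^{2}$ and to track its behaviour via completing the square. Writing $w:=z_{2}-z_{1}$ and $M:=\tilde c^{2}+\snr{z_{1}}^{2}+\snr{z_{2}}^{2}$, the assertion is $\int_{0}^{1}V(\lambda)^{t/2}\,\d\lambda\sim M^{t/2}$. If $w=0$ then $V$ is the constant $\tilde c^{2}+\snr{z_{1}}^{2}\sim M$ and there is nothing to prove, so I would assume $w\neq 0$; by the substitution $\lambda\mapsto 1-\lambda$, which swaps $z_{1}$ and $z_{2}$ and preserves $M$, I may also assume $\snr{z_{1}}\ge\snr{z_{2}}$. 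Then $V(\lambda)=a^{2}+\snr{w}^{2}(\lambda-\lambda^{\ast})^{2}$ with $\lambda^{\ast}:=-\langle z_{1},w\rangle/\snr{w}^{2}\in\mathbb{R}$ and $a^{2}:=\tilde c^{2}+\snr{z_{1}}^{2}-\langle z_{1},w\rangle^{2}/\snr{w}^{2}$, which lies in $[\tilde c^{2},\infty)$ by Cauchy--Schwarz; note $V(0)=\tilde c^{2}+\snr{z_{1}}^{2}=:K$, $V(1)=\tilde c^{2}+\snr{z_{2}}^{2}\le K$, and convexity of the Euclidean norm gives the pointwise bound $V(\lambda)\le\tilde c^{2}+\max\{\snr{z_{1}},\snr{z_{2}}\}^{2}=K$ on $[0,1]$. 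Since also $K\le M\le 2K$, it suffices to prove $\int_{0}^{1}V^{t/2}\,\d\lambda\sim K^{t/2}$.

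Two of the four inequalities hidden in ``$\sim$'' are then immediate from $0\le V\le K$: for $t\ge 0$ one gets $\int_{0}^{1}V^{t/2}\,\d\lambda\le K^{t/2}$, and for $-1<t<0$ one gets $\int_{0}^{1}V^{t/2}\,\d\lambda\ge K^{t/2}$. For the remaining lower bound, needed when $t\ge 0$, I would observe that $\snr{w}\le\snr{z_{1}}+\snr{z_{2}}\le 2\snr{z_{1}}$, so that $\snr{z_{1}+\lambda w}\ge\snr{z_{1}}-\lambda\snr{w}\ge\tfrac12\snr{z_{1}}$ for $\lambda\in[0,\tfrac14]$, whence $V(\lambda)\ge\tilde c^{2}+\tfrac14\snr{z_{1}}^{2}\ge\tfrac14 K$ there; integrating over $[0,\tfrac14]$ gives $\int_{0}^{1}V^{t/2}\,\d\lambda\ge\tfrac14(K/4)^{t/2}$.

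The genuinely delicate point is the upper bound $\int_{0}^{1}V^{t/2}\,\d\lambda\le c(t)K^{t/2}$ in the range $-1<t<0$, since $V$ may degenerate to second order at the interior point $\lambda^{\ast}$ when $a=0$. Here the plan is a short case analysis based on the two lower bounds $V(\lambda)\ge a^{2}$ and $V(\lambda)\ge\snr{w}^{2}(\lambda-\lambda^{\ast})^{2}$ and the identity $K=a^{2}+\snr{w}^{2}D^{2}$, where $D:=\max\{\snr{\lambda^{\ast}},\snr{1-\lambda^{\ast}}\}\ge\tfrac12$. If $a^{2}\ge\snr{w}^{2}D^{2}$, then $K\le 2a^{2}$ and $V\ge a^{2}\ge K/2$ on $[0,1]$, so $\int_{0}^{1}V^{t/2}\le(K/2)^{t/2}$. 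Otherwise $\snr{w}^{2}D^{2}\ge K/2$; if moreover $D>2$, then $\lambda^{\ast}\notin[0,1]$ with $\dist(\lambda^{\ast},[0,1])=D-1>D/2$, hence $V(\lambda)\ge\snr{w}^{2}D^{2}/4\ge K/8$ on $[0,1]$ and again $\int_{0}^{1}V^{t/2}\le(K/8)^{t/2}$; while if $D\le 2$, then $\snr{w}^{2}\ge K/(2D^{2})\ge K/8$, and combining $V(\lambda)^{t/2}\le\snr{w}^{t}\snr{\lambda-\lambda^{\ast}}^{t}$ with the scale-free one-dimensional estimate
\[
\sup_{c\in\mathbb{R}}\int_{0}^{1}\snr{\lambda-c}^{t}\,\d\lambda=\int_{-1/2}^{1/2}\snr{u}^{t}\,\d u=\frac{2^{-t}}{t+1}<\infty
\]
--- which is precisely where the hypothesis $t>-1$ enters --- yields $\int_{0}^{1}V^{t/2}\,\d\lambda\le\frac{2^{-t}}{t+1}\snr{w}^{t}\le\frac{2^{-t}}{t+1}(K/8)^{t/2}$. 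Collecting the cases gives $\int_{0}^{1}V(\lambda)^{t/2}\,\d\lambda\sim K^{t/2}\sim M^{t/2}$ with constants depending only on $t$.

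I expect this last case --- the upper bound for $-1<t<0$ --- to be the only real obstacle: one must isolate the portion of $[0,1]$ on which $V$ is comparable to its ``endpoint size'' $K$, and on the complementary region, where $V$ degenerates near the (possibly interior) minimum point $\lambda^{\ast}$, exploit that the one-dimensional singularity $\snr{\lambda-\lambda^{\ast}}^{t}$ is integrable with integral bounded \emph{uniformly} in the location $\lambda^{\ast}$; this integrability is exactly the role of the restriction $t>-1$. Everything else reduces to the trivial comparison $V\le K$ and an elementary estimate near an endpoint.
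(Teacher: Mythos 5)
The paper offers no proof of this lemma: it is quoted verbatim from \cite{acfu} (Acerbi--Fusco), so there is nothing in the text to compare your argument against line by line. Your proof is correct and self-contained. The reductions are all sound: the swap $\lambda\mapsto 1-\lambda$ legitimately enforces $\snr{z_{1}}\ge\snr{z_{2}}$, convexity of the norm gives $V\le K:=\tilde c^{2}+\snr{z_{1}}^{2}$ with $K\le M\le 2K$ (and $K>0$ under the stated nondegeneracy hypothesis), and this disposes of the upper bound for $t\ge 0$ and the lower bound for $t<0$ at once. The endpoint estimate $V\ge K/4$ on $[0,\tfrac14]$ correctly handles the lower bound for $t\ge0$. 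The only delicate case, the upper bound for $-1<t<0$, is handled properly: the identity $K=a^{2}+\snr{w}^{2}D^{2}$ with $D=\max\{\snr{\lambda^{\ast}},\snr{1-\lambda^{\ast}}\}\ge\tfrac12$ is exactly $\max\{V(0),V(1)\}$, the three cases are exhaustive, and the uniform bound $\sup_{c}\int_{0}^{1}\snr{\lambda-c}^{t}\,\d\lambda=2^{-t}/(t+1)$ is where $t>-1$ is genuinely used. This is essentially the classical route (complete the square, then distinguish whether the minimum of the quadratic on the line is comparable to the endpoint values or whether one must integrate the one-dimensional singularity $\snr{\lambda-\lambda^{\ast}}^{t}$); your write-up makes the case structure and the role of each hypothesis fully explicit, which the cited reference leaves somewhat implicit.
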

Finally, the iteration lemma.
\begin{lemma}\label{l5}
Let $h\colon [\rr_{0},\rr_{1}]\to \mathbb{R}$ be a non-negative and bounded function, and let $\theta \in (0,1)$, $A,B,\gamma_{1},\gamma_{2}\ge 0$ be numbers. Assume that
\begin{flalign*}
h(t)\le \theta h(s)+\frac{A}{(s-t)^{\gamma_{1}}}+\frac{B}{(s-t)^{\gamma_{2}}}
\end{flalign*}
holds for all $\rr_{0}\le t<s\le \rr_{1}$. Then the following inequality holds
\begin{flalign*}
h(\rr_{0})\le c(\theta,\gamma_{1},\gamma_{2})\left\{\frac{A}{(\rr_{1}-\rr_{0})^{\gamma_{1}}}+\frac{B}{(\rr_{1}-\rr_{0})^{\gamma_{2}}}\right\}.
\end{flalign*}
\end{lemma}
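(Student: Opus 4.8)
The plan is to iterate the assumed inequality along a geometric sequence of radii increasing to $\rr_1$, choosing the dilation ratio so small that the gain $\theta<1$ at each step dominates the blow-up of the negative powers of $(s-t)$ that are produced. This is the classical hole-filling/iteration scheme of Giaquinta--Giusti, and here it reduces to a purely one-dimensional computation.

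First I would set $\gamma:=\max\{\gamma_1,\gamma_2,1\}$ and fix $\lambda\in(0,1)$ with $\lambda^{\gamma}=\frac{1+\theta}{2}$, so that $\mu:=\theta\lambda^{-\gamma}=\frac{2\theta}{1+\theta}<1$ and hence $\theta\lambda^{-\gamma_j}\le\mu<1$ for $j=1,2$. Next, define $t_0:=\rr_0$ and $t_{i+1}:=t_i+(1-\lambda)\lambda^{i}(\rr_1-\rr_0)$; since $\sum_{i\ge0}(1-\lambda)\lambda^{i}=1$, the sequence $(t_i)$ is increasing, stays strictly below $\rr_1$, converges to $\rr_1$, and satisfies $t_{i+1}-t_i=(1-\lambda)\lambda^{i}(\rr_1-\rr_0)$. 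Applying the hypothesis with $t=t_i$ and $s=t_{i+1}$ and iterating $k$ times yields
\begin{flalign*}
h(\rr_0)\le\theta^{k}h(t_k)+\sum_{i=0}^{k-1}\theta^{i}\left[\frac{A}{\big((1-\lambda)\lambda^{i}(\rr_1-\rr_0)\big)^{\gamma_1}}+\frac{B}{\big((1-\lambda)\lambda^{i}(\rr_1-\rr_0)\big)^{\gamma_2}}\right].
\end{flalign*}

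Then I would pass to the limit $k\to\infty$. Since $h$ is bounded and $\theta\in(0,1)$, the term $\theta^{k}h(t_k)$ vanishes; and each of the two series converges because its ratio is $\theta\lambda^{-\gamma_j}\le\mu<1$, whence $\sum_{i\ge0}(\theta\lambda^{-\gamma_j})^{i}\le(1-\mu)^{-1}$. Using moreover $(1-\lambda)^{-\gamma_j}\le(1-\lambda)^{-\gamma}$, I obtain
\begin{flalign*}
h(\rr_0)\le\frac{1}{(1-\mu)(1-\lambda)^{\gamma}}\left[\frac{A}{(\rr_1-\rr_0)^{\gamma_1}}+\frac{B}{(\rr_1-\rr_0)^{\gamma_2}}\right],
\end{flalign*}
which is the claim with $c(\theta,\gamma_1,\gamma_2):=\big((1-\mu)(1-\lambda)^{\gamma}\big)^{-1}$, noting that $\mu,\lambda,\gamma$ depend only on $\theta,\gamma_1,\gamma_2$. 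There is no genuine obstacle in this argument; the only points requiring a little care are the choice of $\lambda$ guaranteeing summability of the iterated series (which is why $\gamma$ must be taken at least as large as $\max\{\gamma_1,\gamma_2\}$) and the use of the boundedness of $h$ to discard the term $\theta^{k}h(t_k)$ in the limit.
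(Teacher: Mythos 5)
Your proof is correct and is precisely the classical Giaquinta--Giusti iteration argument that this lemma is always proved by; the paper states the lemma without proof, so there is nothing to compare against. All the details check out: the choice $\lambda^{\gamma}=\frac{1+\theta}{2}$ with $\gamma=\max\{\gamma_1,\gamma_2,1\}$ (the inclusion of $1$ correctly avoids the degenerate case $\gamma_1=\gamma_2=0$) guarantees $\theta\lambda^{-\gamma_j}\le\frac{2\theta}{1+\theta}<1$, the radii $t_k=\rr_0+(1-\lambda^{k})(\rr_1-\rr_0)$ stay in $[\rr_0,\rr_1]$, and the boundedness of $h$ disposes of the remainder term $\theta^{k}h(t_k)$.
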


\subsection{Existence and uniqueness}\label{ex}
The existence of a solution of problem \eqref{op} easily follows from direct methods, we briefly report a sketch for completeness. Let $\{v_{j}\}_{j \in \N}\subset \mathcal{K}_{\psi,g}(\Omega)$ be a minimizing sequence. Therefore,
\begin{flalign}\label{3}
\int_{\Omega}F(x,Dv_{j}) \ \dx \to_{j\to \infty}m:=\inf_{w\in \mathcal{K}_{\psi,g}(\Omega)}\mathcal{F}(w,\Omega).
\end{flalign}
This means that, for $j\in \N$ sufficiently large there holds that
\begin{flalign}\label{1}
\int_{\Omega}F(x,Dv_{j}) \ \dx \le m+1.
\end{flalign}
Combining $\eqref{assf}_{1}$, \eqref{1} and Poincar\'e inequality we directly have
\begin{flalign*}
\nr{Dv_{j}}_{W^{1,p}(\Omega)}^{p}\le \frac{m+1}{\nu}\quad \mbox{and}\quad \nr{v_{j}}_{W^{1,p}(\Omega)}\le c(n,p)\left[\nr{g}_{W^{1,p}(\Omega)}+\left(\frac{m+1}{\nu}\right)^{\frac{1}{p}}\right],
\end{flalign*}
thus, up to extract a (non-relabelled) subsequence, we get
\begin{flalign}\label{2}
v_{j}\rightharpoonup_{j\to \infty}v \ \ \mbox{in} \ \ W^{1,p}(\Omega) \quad \mbox{and} \quad v_{j}\to_{j\to \infty}v \ \ \mbox{in} \ \ L^{p}(\Omega).
\end{flalign}
By $\eqref{2}$ we have that $v(x)\ge \psi(x)$ a.e. in $\Omega$ and $\left.v\right|_{\partial \Omega}=\left.g\right|_{\partial \Omega}$, thus $v\in \mathcal{K}_{\psi,g}(\Omega)$. Using $\eqref{assf0}_{2}$, $\eqref{2}_{1}$, weak lower semicontinuity and \eqref{3} we can conclude that
\begin{flalign*}
\int_{\Omega}F(x,Dv) \ \dx \le \liminf_{j\to \infty}\int_{\Omega}F(x,Dv_{j}) \ \dx =m,
\end{flalign*}
so $v\in \mathcal{K}_{\psi,g}(\Omega)$ solves \eqref{op}. In case we ask for strict convexity rather than just $\eqref{assf0}_{2}$, we can guarantee that $v\in \mathcal{K}_{\psi,g}(\Omega)$ is actually the unique solution of our problem: in fact, if $v,\tilde{v}\in \mathcal{K}_{\psi,g}(\Omega)$ are both solutions of problem \eqref{op}, we can define $\bar{v}:=\frac{\tilde{v}+v}{2}$ and get
\begin{flalign*}
\int_{\Omega}F(x,D\bar{v}) \ \dx <\frac{1}{2}\int_{\Omega}F(x,D\tilde{v}) \ \dx+\frac{1}{2}\int_{\Omega}F(x,Dv) \ \dx=m,
\end{flalign*}
which is clearly a nonsense, since $\bar{v}\in \mathcal{K}_{\psi,g}(\Omega)$. 
\section{Relaxation}\label{re} In this section we shall provide a meaningful definition of relaxation for problem \eqref{op} in the spirit of \cite{acbofo,eslemi,ma2}. Given the local nature of our main theorems, in the following we will not consider boundary conditions. Let $\tilde{\Omega}\Subset \Omega$ be an open subset and define
\begin{flalign*}
&\mathcal{K}_{\psi}:=W^{1,p}_{loc}(\Omega)\cap\left\{w\in W^{1,p}(\tilde{\Omega})\colon w(x)\ge \psi(x) \ \mbox{for a.e.} \ x\in \tilde{\Omega}\right\},\\
&\mathcal{K}^{*}_{\psi}:=W^{1,p}_{loc}(\Omega)\cap\left\{w\in W^{1,q}(\tilde{\Omega})\colon w(x)\ge \psi(x) \ \mbox{for a.e.} \ x\in\tilde{\Omega}\right\}.
\end{flalign*}
Being convex and closed, $\mathcal{K}_{\psi}$ is a Banach subspace of $W^{1,p}(\tilde{\Omega})$ and $\mathcal{K}_{\psi}^{*}$ is a Banach subspace of $W^{1,q}(\tilde{\Omega})$. 
\begin{lemma}\label{l10}
Class $\mathcal{K}_{\psi}^{*}$ is dense in $\mathcal{K}_{\psi}$ with respect to the $W^{1,p}$-norm.
\end{lemma}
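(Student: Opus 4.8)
The plan is to show that every $w\in\mathcal{K}_\psi$ can be approximated in $W^{1,p}(\tilde\Omega)$ by maps $w_\varepsilon\in\mathcal{K}_\psi^*$, i.e.\ maps that lie above $\psi$ and additionally belong to $W^{1,q}(\tilde\Omega)$. The obvious idea—mollifying $w$—fails on two counts: mollification destroys the obstacle constraint $w\ge\psi$ (the average of something above $\psi$ need not stay above $\psi$ when $\psi$ is variable), and near $\partial\tilde\Omega$ the mollified map is not defined. The standard fix is to first \emph{lift} $w$ off the obstacle so that it sits strictly above $\psi$ by a controlled amount, then mollify, exploiting that $\psi\in W^{1,q}(\Omega)$ so a mollification of $\psi$ stays close to $\psi$ in $W^{1,q}$, hence in $W^{1,p}$ since $q>p$ and $\tilde\Omega$ is bounded.

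Concretely, I would proceed as follows. First fix an open set $\tilde\Omega\Subset\Omega'\Subset\Omega$ with smooth boundary so that all mollifications below make sense on $\tilde\Omega$ for small parameter. Write $w=\psi+(w-\psi)$ with $w-\psi\ge 0$; set $\eta:=w-\psi\in W^{1,p}(\tilde\Omega)$, $\eta\ge 0$. For $\delta>0$ consider $\eta_\delta:=\eta+\delta$ and approximate $\eta_\delta$ in $W^{1,p}(\tilde\Omega)$ by a smooth (or merely $W^{1,q}$, even $W^{1,\infty}$) function $\zeta_{\delta,\varepsilon}$ via mollification; since $\eta_\delta\ge\delta>0$, for $\varepsilon=\varepsilon(\delta)$ small enough one has $\zeta_{\delta,\varepsilon}\ge \delta/2>0$ pointwise a.e.\ on $\tilde\Omega$ while $\|\zeta_{\delta,\varepsilon}-\eta_\delta\|_{W^{1,p}(\tilde\Omega)}\le\delta$. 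Now define
\begin{flalign*}
w_{\delta}:=\psi+\zeta_{\delta,\varepsilon(\delta)}.
\end{flalign*}
Then $w_\delta\ge\psi+\delta/2\ge\psi$ a.e., so $w_\delta$ respects the obstacle; $w_\delta\in W^{1,p}(\tilde\Omega)$ and, being $\psi\in W^{1,q}(\tilde\Omega)$ (by \eqref{obs0}) and $\zeta_{\delta,\varepsilon(\delta)}\in W^{1,q}(\tilde\Omega)$, we get $w_\delta\in W^{1,q}(\tilde\Omega)$; and $w_\delta\in W^{1,p}_{loc}(\Omega)$ because outside $\tilde\Omega$ one can simply keep $w$ itself (here one must glue: take $w_\delta$ on $\tilde\Omega$ and $w$ on $\Omega\setminus\tilde\Omega$, checking the two agree closely enough near $\partial\tilde\Omega$, or more cleanly fix the approximation to be an interior one and note membership in $\mathcal{K}_\psi^*$ only requires $W^{1,p}_{loc}(\Omega)$ of the original $w$ together with the $W^{1,q}(\tilde\Omega)$ bound on $\tilde\Omega$). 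Finally estimate
\begin{flalign*}
\|w_\delta-w\|_{W^{1,p}(\tilde\Omega)}=\|\zeta_{\delta,\varepsilon(\delta)}-\eta\|_{W^{1,p}(\tilde\Omega)}\le\|\zeta_{\delta,\varepsilon(\delta)}-\eta_\delta\|_{W^{1,p}(\tilde\Omega)}+\delta\,|\tilde\Omega|^{1/p}\le\delta\big(1+|\tilde\Omega|^{1/p}\big),
\end{flalign*}
which tends to $0$ as $\delta\to 0$, proving density.

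The main obstacle is the matching near $\partial\tilde\Omega$ together with the requirement $w_\delta\in W^{1,p}_{loc}(\Omega)$: a naive mollification is undefined on a collar of $\partial\tilde\Omega$, and a crude gluing introduces a jump in the gradient unless the interior approximant is already $W^{1,p}$-close to $w$ up to the boundary. The clean way around this is to observe that the definition of $\mathcal{K}_\psi$ already builds in $W^{1,p}_{loc}(\Omega)$ for \emph{the same underlying function}, so it suffices to modify $w$ only on $\tilde\Omega$ while leaving its germ near $\partial\tilde\Omega$ essentially intact; concretely one uses a cutoff $\chi$ that is $1$ on a slightly smaller $\tilde\Omega''\Subset\tilde\Omega$ and $0$ near $\partial\tilde\Omega$, sets $w_\delta:=w+\chi(\zeta_{\delta,\varepsilon(\delta)}-\eta_\delta+\delta)$ on $\tilde\Omega$ and $w_\delta:=w$ elsewhere, and checks that $w_\delta\ge\psi$ still holds (the correction is $\ge -\,|\zeta_{\delta,\varepsilon}-\eta_\delta|$ in the transition region, which one can absorb by starting from $\eta+2\delta$ rather than $\eta+\delta$), that $w_\delta\in W^{1,q}(\tilde\Omega)$, and that $\|w_\delta-w\|_{W^{1,p}(\tilde\Omega)}\to0$. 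The role of $q>p$ with $\tilde\Omega$ bounded is only to guarantee that $\psi\in W^{1,q}$ and the mollified corrector in $W^{1,q}$ together keep $w_\delta$ in $W^{1,q}(\tilde\Omega)$; no use of the sharper bound $q-p<p/(n-1)$ from \eqref{pq0} is needed here.
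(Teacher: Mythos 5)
Your construction is correct and coincides with the paper's: since $(w-\psi)*\rho_{\varepsilon}=w*\rho_{\varepsilon}-\psi*\rho_{\varepsilon}$, your map $w_{\delta}=\psi+(w-\psi+\delta)*\rho_{\varepsilon}$ is exactly the paper's approximant $\tilde{w}_{j}-\tilde{\psi}_{j}+\psi$ shifted by the (unnecessary) constant $\delta$ --- the pointwise lower bound you want already follows from $w\ge\psi$ together with the non-negativity of the mollification kernel, with no smallness condition on $\varepsilon$. The paper likewise does not fuss over the collar near $\partial\tilde{\Omega}$, so your additional gluing discussion is more care than the published argument takes.
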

\begin{proof}
Let $\{\phi_{j}\}_{j \in \N}$ be a family of standard, non-negative, radially symmetric mollifiers so that
\begin{flalign}\label{molli}
\phi \in C^{\infty}_{c}(B_{1}),\qquad \nr{\phi}_{L^{1}(\mathbb{R}^{n})}=1, \qquad \phi_{j}(x):=j^{n}\phi\left(jx\right)
\end{flalign}
and set $\tilde{\psi}_{j}:=\psi*\phi_{j}$ and $\tilde{w}_{j}:=w*\phi_{j}$, where $w\in \mathcal{K}_{\psi}$. By the properties of convolution and \eqref{obs0} we have that
\begin{flalign}\label{21}
\begin{cases}
\ \{\tilde{\psi}_{j}\}_{j\in \N}\subset C^{\infty}_{loc}(\Omega)\quad \mbox{and}\quad \tilde{\psi}_{j}\to_{j\to \infty}\psi \ \ \mbox{in} \ \ W^{1,q}(\tilde{\Omega})\\
\ \{\tilde{w}_{j}\}_{j\in \N}\subset C^{\infty}_{loc}(\Omega)\quad \mbox{and}\quad \tilde{w}_{j}\to_{j\to \infty}w \ \ \mbox{in} \ \ W^{1,p}(\tilde{\Omega}).
\end{cases}
\end{flalign}
Furthermore, there holds
\begin{flalign}\label{22}
\tilde{w}_{j}(x)=\int_{B_{1}}\phi(y)w(x+j^{-1}y) \ \dx \ge \int_{B_{1}}\phi(y)\psi(x+j^{-1}y) \ \dx=\tilde{\psi}_{j}(x) \quad \mbox{for all} \ \ x\in \tilde{\Omega}.
\end{flalign}
Now set $w_{j}:=\tilde{w}_{j}-\tilde{\psi}_{j}+\psi$. From $\eqref{pq0}_{1}$, \eqref{21}, \eqref{obs0} and \eqref{22}, it directly follows that $\{w_{j}\}_{j\in \N}\subset \mathcal{K}^{*}_{\psi}$ and
\begin{flalign*}
\nr{w_{j}-w}_{L^{p}(\tilde{\Omega})}\le \left[\nr{\tilde{w}_{j}-w}_{L^{p}(\tilde{\Omega})}+\nr{\tilde{\psi}_{j}-\psi}_{L^{p}(\tilde{\Omega})}\right]\to_{j\to \infty}0.
\end{flalign*}
\end{proof}
Once established this density result, we can consider the relaxed functional
\begin{flalign*}
\mathcal{K}_{\psi}\ni w\mapsto\bar{\mathcal{F}}(w,\tilde{\Omega}):=\inf_{\mathcal{C}(w)}\left\{\liminf_{j\to \infty}\int_{\tilde{\Omega}}F(x,Dw_{j}) \ \dx\right\},
\end{flalign*}
where
\begin{flalign*}
&\mathcal{C}(w):=\left\{\{w_{j}\}_{j \in \N}\subset \mathcal{K}^{*}_{\psi}\colon w_{j}\rightharpoonup_{j\to \infty}w \ \ \mbox{in} \ \ W^{1,p}_{loc}(\Omega)\right\},
\end{flalign*}
Notice that $\mathcal{C}(w)$ is non-empty, given that the sequence $\{\tilde{w}_{j}-\tilde{\psi}_{j}+\psi\}_{j\in \N}$, where $\{\tilde{w}_{j}\}_{j\in \N}$ and $\{\tilde{\psi}_{j}\}_{j \in \N}$ are as in \eqref{21}, belongs to $\mathcal{C}(w)$, (recall \eqref{22}). Let us connect functional $\bar{\mathcal{F}}$ with the original one appearing in problem \eqref{op}. By $\eqref{assf0}_{2}$ and weak-lower semicontinuity, we have
\begin{flalign}\label{27}
\bar{\mathcal{F}}(w,\tilde{\Omega})\ge \mathcal{F}(w,\tilde{\Omega}) \quad \mbox{for all} \ \ w\in \mathcal{K}_{\psi}.
\end{flalign}
Moreover, if $w\in \mathcal{K}_{\psi}^{*}$, we get in addition that the regularized sequence in $\eqref{21}_{2}$, $\{\tilde{w}_{j}\}_{j\in \N}$ strongly converges to $w$ in $W^{1,q}(\tilde{\Omega})$, therefore, using a well-known variant of Lebesgue dominated convergence theorem, we end up with
\begin{flalign}\label{28}
\liminf_{j\to \infty}\int_{\tilde{\Omega}}F(x,D\tilde{w}_{j}) \ \dx=\lim_{j\to \infty}\int_{\tilde{\Omega}}F(x,D\tilde{w}_{j}) \ \dx =\int_{\tilde{\Omega}}F(x,Dw) \ \dx.
\end{flalign}
From \eqref{27} and \eqref{28} we can conclude that if $w\in \mathcal{K}_{\psi}^{*}$, then $\bar{\mathcal{F}}(w,\tilde{\Omega})=\mathcal{F}(w,\tilde{\Omega})$. As in \cite{eslemi}, we then define the gap functional
\begin{flalign*}
\mathcal{L}(w,\tilde{\Omega}):=\begin{cases} \ \bar{\mathcal{F}}(w,\tilde{\Omega})-\mathcal{F}(w,\tilde{\Omega}) \ \ &\mbox{if} \ \ \mathcal{F}(w,\tilde{\Omega})<\infty\\
\ 0 \ \ &\mbox{if} \ \ \mathcal{F}(w,\tilde{\Omega})=\infty.
\end{cases}
\end{flalign*}
If $w\in \mathcal{K}_{\psi}$ is so that $\mathcal{L}(w,\tilde{\Omega})=0$, then there exists a sequence $\{w_{j}\}_{j\in \N}\subset W^{1,p}_{loc}(\Omega)\cap W^{1,q}(\tilde{\Omega})$ so that
\begin{flalign}\label{31}
\begin{cases}
\ w_{j}\rightharpoonup_{j\to \infty}w \ \ \mbox{in} \ \ W^{1,p}_{loc}(\Omega)\\
\ w_{j}\ge \psi \ \ \mbox{a.e. in} \ \tilde{\Omega}\\
\ \int_{\tilde{\Omega}}F(x,Dw_{j}) \ \dx \to_{j\to \infty}\int_{\tilde{\Omega}}F(x,Dw) \ \dx,
\end{cases}
\end{flalign}
see \cite[Section 4]{acbofo}. This is actually the key to show that the vanishing of the Lavrentiev gap functional assures that
\begin{flalign*}
\inf_{w\in \mathcal{K}_{\psi}}\int_{\tilde{\Omega}}F(x,Dw) \ \dx\stackrel{\eqref{assf0}_{1}}{=}\min_{w\in \mathcal{K}_{\psi}}\int_{\tilde{\Omega}}F(x,Dw) \ \dx=\inf_{w\in \mathcal{K}^{*}_{\psi}}\int_{\tilde{\Omega}}F(x,Dw) \ \dx.
\end{flalign*}
Indeed, since $\mathcal{K}^{*}_{\psi}\subset \mathcal{K}_{\psi}$, we have
\begin{flalign}\label{30}
\inf_{w\in \mathcal{K}^{*}_{\psi}}\int_{\tilde{\Omega}}F(x,Dw) \ \dx\ge \inf_{w\in \mathcal{K}_{\psi}}\int_{\tilde{\Omega}}F(x,Dw) \ \dx
\end{flalign}
and if we assume that $\mathcal{L}(v,\tilde{\Omega})=0$, where $v\in \mathcal{K}_{\psi}$ is so that
\begin{flalign*}
\int_{\tilde{\Omega}}F(x,Dv) \ \dx =\min_{w\in \mathcal{K}_{\psi}}\int_{\tilde{\Omega}}F(x,Dw) \ \dx,
\end{flalign*}
then we can find a sequence $\{v_{j}\}_{j\in \N}\subset W^{1,p}_{loc}(\Omega)\cap W^{1,q}(\tilde{\Omega})$ as in \eqref{31} which realizes \eqref{30}.

\begin{remark}
\emph{We saw before that for any given map $w\in \mathcal{K}_{\psi,g}(\Omega)$, condition
\begin{flalign}\label{nolavq}
\mathcal{L}(w,\tilde{\Omega})=0\quad \mbox{for all open subsets} \ \tilde{\Omega}\Subset \Omega
\end{flalign}
yields \eqref{31}, which is a crucial tool in the proof of Theorems \ref{t1}-\ref{t2}. In particular, if we do not assume any specific underlying structure for the integrand $F$, \eqref{nolavq} needs to be taken as an assumption. On the other hand, by \cite[Section 5]{eslemi} and \cite[Section 3.5]{haha}, under suitable assumptions, we know that there are several models, such as}
\begin{flalign*}
&\mathcal{F}_{1}(w,\Omega):=\int_{\Omega}\snr{Dw}^{p(x)} \ \dx\ \  &&1<\inf_{x\in \Omega}p(x)\le p(\cdot)\le \sup_{x\in \Omega}p(x)<\infty\\
&\mathcal{F}_{2}(w,\Omega):=\int_{\Omega}\left[\snr{Dw}^{p}+a(x)\snr{Dw}^{q}\right] \ \dx\ \ &&0\le a(\cdot)\in C^{0,\alpha}(\Omega)\\ &\mathcal{F}_{3}(w,\Omega):=\int_{\Omega}\varphi(x,\snr{Dw}) \ \dx, \ \ &&\mbox{see assumptions of \cite[Section 3]{haok}},
\end{flalign*}
\emph{just to quote the most popular, realizing \eqref{nolavq}. In fact, whenever $w\in W^{1,1}(\Omega)$ is so that $\mathcal{F}_{i}(w,\Omega)<\infty$ and $\tilde{\Omega}\Subset \Omega$ is an open subset, then we can regularize $w$ via a family of mollifiers as in \eqref{molli}, thus obtaining a sequence $\{\bar{w}_{j}\}_{j \in \N}\in C^{\infty}_{loc}(\Omega)$ satisfying}
\begin{flalign*}
\bar{w}_{j}\to_{j\to \infty}w \ \ \mbox{in} \ \ W^{1,p}(\tilde{\Omega})\quad \mbox{\emph{and}}\quad \mathcal{F}_{i}(\bar{w}_{j},\tilde{\Omega})\to_{j\to \infty}\mathcal{F}_{i}(w,\tilde{\Omega}),
\end{flalign*}
\emph{for all $i\in \{1,2,3\}$. We can then apply the trick presented in the proof of Lemma \ref{l10} and make minor changes to the techniques in \cite[Section 5]{eslemi} and \cite[Section 3.5]{haha} to build a sequence $\{\tilde{v}_{j}\}_{j \in \N}\subset W^{1,p}_{loc}(\Omega)\cap W^{1,q}(\tilde{\Omega})$ matching \eqref{31}. Given that \eqref{31} and \eqref{nolavq} are equivalent, under the appropriate set of assumptions on exponents or coefficients, our results cover models $\mathcal{F}_{1}$-$\mathcal{F}_{3}$, see Sections \ref{pt1}-\ref{pt2} for more details.}

\end{remark} 
\section{Proof of Theorem \ref{t1}}\label{pt1} To prove Theorem \ref{t1}, we need to assume something more on both the integrand $F$ and on the obstacle $\psi$. Precisely, we ask that the Carath\'eodory integrand $F\colon \Omega\times \mathbb{R}^{n}\to \mathbb{R}$ verifies
\begin{flalign}\label{assf}
\begin{cases}
\ \nu \snr{z}^{p}\le F(x,z)\le L(1+\snr{z}^{2})^{\frac{q}{2}}\\
\ z\mapsto F(\cdot,z)\in C^{1}(\mathbb{R}^{n}) \\
\ \left[(\partial_{z} F(x,z_{1})-\partial_{z} F_{z}(x,z_{2}))\cdot (z_{1}-z_{2})\right]\ge \nu (\mu^{2}+\snr{z_{1}}^{2}+\snr{z_{2}}^{2})^{\frac{p-2}{2}}\snr{z_{1}-z_{2}}^{2}\\
\ \snr{\partial_{z} F(x_{1},z)-\partial_{z} F(x_{2},z)}\le L \snr{x-y}^{\alpha}(1+\snr{z}^{2})^{\frac{q-1}{2}}, \quad \alpha \in (0,1],
\end{cases}
\end{flalign}
for all $x,x_{1},x_{2}\in \Omega$ and all $z,z_{1},z_{2}\in \mathbb{R}^{n}$ with $0<\nu\le L$ and $\mu\in [0,1]$ absolute constants. The exponents $(p,q)$ are such that
\begin{flalign}\label{pq}
1<p\le q<p\left(1+\frac{\alpha}{n}\right)
\end{flalign}
and the obstacle $\psi\colon \Omega \to \mathbb{R}$ satisfies
\begin{flalign}\label{obbd}
\psi\in W^{1+\alpha,q}(\Omega).
\end{flalign}
Some comments are in order. First, notice that \eqref{pq0} holds also in this case. Moreover, $\eqref{assf}_{3}$ implies that
\begin{flalign}\label{sconv}
z\mapsto F(\cdot,z)\ \ \mbox{is strictly convex}
\end{flalign}
and, as a consequence of $\eqref{assf}_{1,2}$ and \eqref{sconv}, we get that
\begin{flalign}\label{23}
\snr{\partial_{z} F(x,z)}\le c(n,L,q)(1+\snr{z}^{2})^{\frac{q-1}{2}},
\end{flalign}
see \cite[Lemma 2.1]{ma1}. Furthermore, by Lemma \ref{l9}, 
\begin{flalign*}
\begin{cases}
\ D\psi\in L^{\frac{nq}{n-\alpha q}}(\Omega,\mathbb{R}^{n}) \quad &\mbox{if} \ \  \alpha q<n\\
\ D\psi \in L^{t}(\Omega,\mathbb{R}^{n}) \ \ \mbox{for all} \ t\ge q\quad &\mbox{if} \ \ \alpha q\ge n
\end{cases}
\end{flalign*}
so in any case $D\psi\in L^{q}(\Omega)$ and also \eqref{obs0} still holds true. This legalizes our final assumption: condition \eqref{nolavq} is verified by the solution $v\in \mathcal{K}_{\psi,g}(\Omega)$ of problem \eqref{op}, (recall the content of Section \ref{ex} and \eqref{sconv}).
Finally, by \eqref{obbd} and \eqref{0} we can conclude that if $B_{\rr}\Subset \Omega$ is a ball and $h\in \mathbb{R}^{n}$ is any vector with $\snr{h}<\frac{\dist(\partial B_{\rr},\partial \Omega)}{4}$, then 
\begin{flalign}\label{34}
\left(\int_{B_{\rr}}\snr{D\psi(x+h)-D\psi(x)}^{q} \ \dx\right)^{\frac{1}{q}}\le c\snr{h}^{\alpha}\nr{\psi}_{W^{1+\alpha,q}(\Omega)},
\end{flalign}
for $c=c(n,q,\alpha)$. For the ease of exposition, we shall split the proof into two moments: first we are going to show the higher integrability result and then derive extra fractional differentiability.
\subsection{Higher integrability}\label{hi}
Let $v\in \mathcal{K}_{\psi}(\Omega)$ be the solution of problem \eqref{op}. Let us fix a ball $B_{\rr}\Subset \Omega$ with $\rr\in (0,1]$. Since $v$ satisfies \eqref{nolavq}, by \eqref{31}, this means that there exists a sequence $\{\tilde{v}_{j}\}_{j \in \N}\subset W^{1,p}_{loc}(\Omega)\cap W^{1,q}(B_{\rr})$ such that
\begin{flalign}\label{35}
\begin{cases}
\ \tilde{v}_{j}\rightharpoonup_{j\to \infty}v \ \ \mbox{in} \ \ W^{1,p}_{loc}(\Omega)\\
\ \tilde{v}_{j}\ge \psi \ \ \mbox{a.e. in} \ B_{\rr}\\
\ \int_{B_{\rr}}F(x,D\tilde{v}_{j}) \ \dx \to_{j\to \infty}\int_{B_{\rr}}F(x,Dv) \ \dx.
\end{cases}
\end{flalign}
We introduce a suitable family of regularized problems. To do so, we set
\begin{flalign}\label{fj} F_{j}(x,z):=F(x,z)+\frac{\varepsilon_{j}}{q}(1+\snr{z}^{2})^{\frac{q}{2}} \ \ \mbox{for all} \ \ (x,z)\in B_{\rr}\times \mathbb{R}^{n}
\end{flalign}
and consider the obstacle problem
\begin{flalign}\label{opj}
\mathcal{K}_{\psi,\tilde{v}_{j}}^{*}(B_{\rr})\ni w\mapsto \min\int_{B_{\rr}}F_{j}(x,Dw) \ \dx,
\end{flalign}
where 
\begin{flalign}\label{eps}
\varepsilon_{j}:=\left[1+j+\nr{D\tilde{v}_{j}}_{L^{q}(B_{\rr})}^{q}\right]^{-1}
\end{flalign}
and
\begin{flalign}\label{kjd}
\mathcal{K}^{*}_{\psi,\tilde{v}_{j}}(B_{\rr}):=\left\{w\in W^{1,q}(B_{\rr})\colon w(x)\ge \psi(x) \ \mbox{a.e. in} \ B_{\rr} \ \mbox{and} \ \left.w\right|_{\partial{B_{\rr}}}=\left.\tilde{v}_{j}\right|_{\partial{B_{\rr}}}\right\}.
\end{flalign}
Notice that $\mathcal{K}_{\psi,\tilde{v}_{j}}^{*}(B_{\rr})\not =\emptyset$ since by \eqref{35}, $\tilde{v}_{j}\in \mathcal{K}_{\psi,\tilde{v}_{j}}^{*}(B_{\rr})$. Recalling assumptions \eqref{assf}, it is easy to see that the integrand in \eqref{fj} satisfies
\begin{flalign}\label{assfj}
\begin{cases}
\ \nu \snr{z}^{p}+\frac{\varepsilon_{j}}{q}(1+\snr{z}^{2})^{\frac{q}{2}}\le F_{j}(x,z)\le \left(L+\frac{\varepsilon_{j}}{q}\right)(1+\snr{z}^{2})^{\frac{q}{2}}\\
\ z\mapsto F_{j}(x,z)\in C^{1}(\mathbb{R}^{n})\\
\ \left[(\partial_{z} F_{j}(x,z_{1})-\partial_{z} F_{j}(x,z_{2}))\cdot (z_{1}-z_{2})\right]\ge \nu (\mu^{2}+\snr{z_{1}}^{2}+\snr{z_{2}}^{2})^{\frac{p-2}{2}}\snr{z_{1}-z_{2}}^{2}\\
\ \snr{\partial_{z} F_{j}(x_{1},z)-\partial_{z} F_{j}(x_{2},z)}\le L \snr{x-y}^{\alpha}(1+\snr{z}^{2})^{\frac{q-1}{2}},
\end{cases}
\end{flalign}
whenever $x,x_{1},x_{2}\in B_{\rr}$ and $z,z_{1},z_{2}\in \mathbb{R}^{n}$ for absolute constants $0<\nu\le L$ and $\mu\in [0,1]$. Notice that $\eqref{assfj}_{3}$ yields that $z\mapsto F_{j}(\cdot,z)$ is strictly convex so, again by $\eqref{assfj}_{1,2}$ it follows that
\begin{flalign}\label{4}
\snr{\partial_{z} F_{j}(x,z)}\le c(n,L,q)(1+\snr{z}^{2})^{\frac{q-1}{2}} \ \ \mbox{for all} \ \ (x,z)\in B_{\rr}\times \mathbb{R}^{n}.
\end{flalign}
Using the content of Section \ref{ex}, we see that there exists a unique solution $v_{j}\in \mathcal{K}_{\psi,\tilde{v}_{j}}^{*}(B_{\rr})$ of problem \eqref{opj} and the following variational inequality holds
\begin{flalign}\label{vej}
\int_{B_{\rr}}\partial_{z} F_{j}(x,Dv_{j})\cdot D(w-v_{j}) \ \dx\ge 0\quad \mbox{for all} \ \ w\in \mathcal{K}_{\psi,\tilde{v}_{j}}^{*}(B_{\rr}).
\end{flalign}
To recover \eqref{vej}, we pick any $w\in \mathcal{K}_{\psi,\tilde{v}_{j}}^{*}(B_{\rr})$ and notice that, for $\sigma \in (0,1)$, the function $w_{j,\sigma}:=v_{j}+\sigma(w-v_{j})$ belongs to $\mathcal{K}_{\psi,\tilde{v}_{j}}^{*}(B_{\rr})$, thus it is an admissible competitor in problem \eqref{opj}. By the minimality of $v_{j}$ we have
\begin{flalign}\label{5}
0\le &\sigma^{-1}\int_{B_{\rr}}\left[F_{j}(x,Dw_{j,\sigma})-F_{j}(x,Dv_{j})\right] \ \dx\nonumber \\
=&\int_{B_{\rr}}\left(\int_{0}^{1}\partial_{z} F_{j}(x,Dv_{j}+\lambda\sigma (Dw-Dv_{j})) \ \d \lambda\right)\cdot (Dw-Dv_{j}) \ \dx.
\end{flalign}
Now we can use \eqref{4} to legalize an application of the dominated convergence theorem and send $\sigma \to 0$ in \eqref{5}, the outcome being precisely \eqref{vej}. At this point we fix parameters $0<\frac{\rr}{2}\le t<s\le \rr\le 1$, take a cut-off function $\eta\in C^{1}_{c}(B_{\rr})$ with the following specifics:
\begin{flalign}\label{eta}
\chi_{B_{t}}\le \eta\le \chi_{B_{(s+t)/2}}\quad \mbox{and}\quad \snr{D\eta}\le \frac{4}{s-t}
\end{flalign}
and a vector $h\in \mathbb{R}^{n}$ with $\snr{h}<\frac{1}{1000}\min\left\{s-t,\dist(\partial B_{\rr},\partial \Omega)\right\}$. We look at the map $w_{j}:=v_{j}+\frac{1}{2}\tau_{-h}(\eta^{2}\tau_{h}(v_{j}-\psi))$. By construction, $w_{j}\in W^{1,q}(\Omega)$, condition $\eqref{eta}_{1}$ guarantees that $\left.w_{j}\right|_{\partial B_{\rr}}=\left.\tilde{v}_{j}\right|_{\partial B_{\rr}}$ and
\begin{flalign*}
w_{j}(x)=&v_{j}(x)+\frac{1}{2}\left\{\eta^{2}(x)\left[(v_{j}(x+h)-\psi(x+h))-(v_{j}(x)-\psi(x))\right]\right\}\nonumber \\
&-\frac{1}{2}\left\{\eta^{2}(x-h)\left[(v_{j}(x)-\psi(x))-(v_{j}(x-h)-\psi(x-h))\right]\right\}\nonumber \\
\ge &\frac{1}{2}(1-\eta^{2}(x))v_{j}(x)+\frac{1}{2}\eta^{2}(x)\psi(x)\nonumber \\
&+\frac{1}{2}(1-\eta^{2}(x-h))v_{j}(x)+\frac{1}{2}\eta^{2}(x-h)\psi(x)\ge \psi(x),
\end{flalign*}
therefore $w_{j}\in \mathcal{K}^{*}_{\tilde{v}_{j},\psi}(B_{\rr})$ is an admissible test function in \eqref{vej}. Using the integration by part rule for finite difference operators we obtain
\begin{flalign*}
0\le& -\int_{B_{\rr}}\tau_{h}(\partial_{z} F_{j}(x,Dv_{j}))\cdot D(\eta^{2}\tau_{h}(v_{j}-\psi)) \ \dx\nonumber \\
=&-\int_{B_{\rr}}\eta^{2}\left[\partial_{z} F_{j}(x+h,Dv_{j}(x+h))-\partial_{z} F_{j}(x+h,Dv_{j}(x))\right]\cdot \tau_{h}(Dv_{j}) \ \dx \nonumber \\
&+\int_{B_{\rr}}\eta^{2}\left[\partial_{z} F_{j}(x+h,Dv_{j}(x+h))-\partial_{z} F_{j}(x+h,Dv_{j}(x))\right]\cdot \tau_{h}(D\psi) \ \dx\\
&-2\int_{B_{\rr}}\eta\left[\partial_{z} F_{j}(x+h,Dv_{j}(x+h))-\partial_{z} F_{j}(x+h,Dv_{j}(x))\right]\cdot (\tau_{h}(v_{j}-\psi)D\eta) \ \dx\nonumber \\
&-\int_{B_{\rr}}\eta^{2}\left[\partial_{z} F_{j}(x+h,Dv_{j}(x))-\partial_{z} F_{j}(x,Dv_{j}(x))\right]\cdot \tau_{h}(Dv_{j}) \ \dx\nonumber \\
&+\int_{B_{\rr}}\eta^{2}\left[\partial_{z} F_{j}(x+h,Dv_{j}(x))-\partial_{z} F_{j}(x,Dv_{j}(x))\right]\cdot \tau_{h}(D\psi) \ \dx\nonumber \\
&-2\int_{B_{\rr}}\eta\left[\partial_{z} F_{j}(x+h,Dv_{j}(x))-\partial_{z} F_{j}(x,Dv_{j}(x))\right]\cdot (\tau_{h}(v_{j}-\psi)D\eta) \ \dx\nonumber \\
=:&\mbox{(I)}+\mbox{(II)}+\mbox{(III)}+\mbox{(IV)}+\mbox{(V)}+\mbox{(VI)}.
\end{flalign*}
From $\eqref{assfj}_{3}$ and Lemma \ref{l1}, we readily have
\begin{flalign*}
\mbox{(I)}\le&-\nu\int_{B_{\rr}}(\mu^{2}+\snr{Dv_{j}(x+h)}^{2}+\snr{Dv_{j}(x)}^{2})^{\frac{p-2}{2}}\snr{\tau_{h}(Dv_{j})}^{2} \ \dx\nonumber \\
\le& -c(\nu,p)\int_{B_{\rr}}\eta^{2}\snr{\tau_{h}(V_{\mu,p}(Dv_{j}))}^{2} \ \dx.
\end{flalign*}
Combining $\eqref{4}$, H\"older and Young inequalities, $\eqref{eta}_{1}$, Lemma \ref{l2} and \eqref{34} we obtain
\begin{flalign*}
\snr{\mbox{(II)}}\le&c\int_{B_{\rr}}\eta^{2}\left(1+\snr{Dv_{j}(x+h)}^{2}+\snr{Dv_{j}(x)}^{2}\right)^{\frac{q-1}{2}}\snr{\tau_{h}(D\psi)} \ \dx\nonumber \\
\le &c\left(\int_{B_{(s+t)/2}}(1+\snr{Dv_{j}}^{2})^{\frac{q}{2}}\right)^{\frac{q-1}{q}}\left(\int_{B_{(s+t)/2}}\snr{\tau_{h}(D\psi)}^{q} \ \dx\right)^{\frac{1}{q}}\nonumber \\
\le& c\snr{h}^{\alpha}\left[1+\int_{B_{s}}(1+\snr{Dv_{j}}^{2})^{\frac{q}{2}} \ \dx\right],
\end{flalign*}
where $c=c(L,q,\nr{\psi}_{W^{1+\alpha,q}(\Omega)})$. By \eqref{4}, H\"older and Young inequalities, \eqref{eta}, Lemmas \ref{l3} and \ref{l2} we get
\begin{flalign*}
\snr{(\mbox{III})}\le &c\int_{B_{\rr}}\eta\left(1+\snr{Dv_{j}(x+h)}^{2}+\snr{Dv_{j}(x)}^{2}\right)^{\frac{q-1}{2}}\snr{\tau_{h}(v_{j}-\tilde{\psi_{j}})}\snr{D\eta} \ \dx\nonumber \\
\le &\frac{c}{(s-t)}\left(\int_{B_{(s+t)/2}}(1+\snr{Dv_{j}}^{2})^{\frac{q}{2}} \ \dx\right)^{\frac{q-1}{q}}\left(\int_{B_{(s+t)/2}}\snr{\tau_{h}(v_{j}-\psi)}^{q} \ \dx\right)^{\frac{1}{q}}\nonumber \\
\le &\frac{c\snr{h}}{(s-t)}\left(\int_{B_{s}}(1+\snr{Dv_{j}}^{2})^{\frac{q}{2}} \ \dx\right)^{\frac{q-1}{q}}\left(\int_{B_{s}}\left[1+\snr{Dv_{j}}^{2}+\snr{D\psi}^{2}\right]^{\frac{q}{2}} \ \dx\right)^{\frac{1}{q}}\nonumber \\
\le &\frac{c\snr{h}}{(s-t)}\left[1+\int_{B_{s}}\left[\snr{Dv_{j}}^{q}+\snr{D\psi}^{q}\right] \ \dx\right],
\end{flalign*}
with $c=c(n,L,q)$. Using $\eqref{assfj}_{5}$, H\"older and Young inequalities, $\eqref{eta}_{1}$ and Lemma \ref{l2} we see that
\begin{flalign*}
\snr{\mbox{(IV)}}+\snr{\mbox{(V)}}\le &c\snr{h}^{\alpha}\int_{B_{\rr}}\eta^{2}(1+\snr{Dv_{j}}^{2})^{\frac{q-1}{2}}(\snr{Dv_{j}(x+h)}+\snr{Dv_{j}(x)}) \ \dx\nonumber \\
&+c\snr{h}^{\alpha}\int_{B_{\rr}}\eta^{2}(1+\snr{Dv_{j}}^{2})^{\frac{q-1}{2}}(\snr{D\psi(x+h)}+\snr{D\psi(x)}) \ \dx\nonumber \\
\le &c\snr{h}^{\alpha}\int_{B_{s}}(1+\snr{Dv_{j}}^{2})^{\frac{q}{2}} \ \dx \nonumber \\
&+c\snr{h}^{\alpha}\left(\int_{B_{t}}(1+\snr{Dv_{j}}^{2})^{\frac{q}{2}} \ \dx\right)^{\frac{q-1}{2}}\left(1+\int_{B_{t}}\snr{D\psi}^{q} \ \dx\right)^{\frac{1}{q}}\nonumber \\
\le &c\snr{h}^{\alpha}\left[1+\int_{B_{t}}\left[\snr{Dv_{j}}^{q}+\snr{D\psi}^{q}\right] \ \dx\right],
\end{flalign*}
for $c=c(n,L,q)$. Finally, exploiting $\eqref{assfj}_{4}$, H\"older and Young inequalities, \eqref{eta}, Lemmas \ref{l2} and \ref{l3} we obtain
\begin{flalign*}
\snr{\mbox{(VI)}}\le &\frac{c\snr{h}^{\alpha}}{(s-t)}\left(\int_{B_{s}}(1+\snr{Dv_{j}}^{2})^{\frac{q}{2}} \ \dx\right)^{\frac{q-1}{q}}\left(\int_{B_{(s+t)/2}}\snr{\tau_{h}(v_{j}-\psi)}^{q} \ \dx\right)^{\frac{1}{q}}\nonumber \\
\le &\frac{c\snr{h}^{1+\alpha}}{(s-t)}\left(\int_{B_{s}}(1+\snr{Dv_{j}}^{2})^{\frac{q}{2}} \ \dx\right)\nonumber \\
&+\frac{c\snr{h}^{1+\alpha}}{(s-t)}\left(\int_{B_{s}}(1+\snr{Dv_{j}}^{2})^{\frac{q}{2}} \ \dx\right)^{\frac{q-1}{q}}\left(\int_{B_{s}}\snr{D\psi}^{q} \ \dx\right)^{\frac{1}{q}}\nonumber\\
\le &\frac{c\snr{h}^{1+\alpha}}{(s-t)}\left[1+\int_{B_{s}}\left[\snr{Dv_{j}}^{q}+\snr{D\psi}^{q}\right] \ \dx\right],
\end{flalign*}
where $c=c(n,L,q)$. Merging the content of all the above displays and recalling $\eqref{eta}_{1}$, we can conclude that
\begin{flalign}\label{7}
\int_{B_{t}}\snr{\tau_{h}(V_{\mu,p}(Dv_{j}))}^{2} \ \dx \le \frac{c\snr{h}^{\alpha}}{(s-t)}\left[1+\int_{B_{s}}\left[\snr{Dv_{j}}^{q}+\snr{D\psi}^{q}\right] \ \dx\right],
\end{flalign}
with $c=c(\texttt{data}_{\texttt{q}})$. Now we can invoke Lemma \ref{l4} to get, with \eqref{7},
\begin{flalign*}
\nr{V_{\mu,p}(Dv_{j})}_{W^{\beta,2}(B_{t})}\le \frac{c}{(s-t)^{\frac{n+1+2\beta}{2}}}\left[1+\left(\int_{B_{s}}\left[\snr{Dv_{j}}^{q}+\snr{D\psi}^{q}\right] \ \dx\right)^{\frac{1}{2}}\right],
\end{flalign*}
with $c=c(\texttt{data}_{\texttt{q}},\beta)$, so, by Lemma \ref{l9} we obtain
\begin{flalign}\label{8}
\nr{V_{\mu,p}(Dv_{j})}_{L^{\frac{2n}{n-2\beta}}(B_{t})}\le \frac{c(\texttt{data}_{\texttt{q}},\beta)}{(s-t)^{\tilde{\theta}}}\left[1+\nr{Dv_{j}}_{L^{q}(B_{s})}^{\frac{q}{2}}+\nr{D\psi}_{L^{q}(B_{s})}^{\frac{q}{2}}\right],
\end{flalign}
for all $\beta\in \left(0,\frac{\alpha}{2}\right)$. In \eqref{8}, $\tilde{\theta}=\tilde{\theta}(n,\alpha,\beta)$. We manipulate \eqref{8} in a more convenient way: 
\begin{flalign}\label{9}
\nr{Dv_{j}}_{L^{\frac{np}{n-2\beta}(B_{s})}}\le \frac{c(\texttt{data}_{\texttt{q}},\beta)}{(s-t)^{\bar{\theta}}}\left[1+\nr{Dv_{j}}_{L^{q}(B_{s})}^{\frac{q}{p}}+\nr{D\psi}_{L^{q}(B_{s})}^{\frac{q}{p}}\right],
\end{flalign}
set $\bar{\theta}:=\frac{2\tilde{\theta}}{p}$. Notice that, by \eqref{pq}, for $\beta \in \left[\frac{\alpha n}{2(n+\alpha)},\frac{\alpha}{2}\right)$, there holds that $q<\frac{np}{n-2\beta}$, thus we can apply the interpolation inequality
\begin{flalign}\label{10}
\nr{Dv_{j}}_{L^{q}(B_{s})}\le \nr{Dv_{j}}_{L^{\frac{np}{n-2\beta}}(B_{s})}^{\kappa}\nr{Dv_{j}}_{L^{p}(B_{s})}^{1-\kappa},
\end{flalign}
where $\kappa\in (0,1)$ is derived via the equation
\begin{flalign*}
\frac{1}{q}=\frac{\kappa(n-2\beta)}{np}+\frac{(1-\kappa)}{p}\Rightarrow \kappa=\frac{(q-p)n}{2\beta q}.
\end{flalign*}
Inserting \eqref{10} in \eqref{9} we get
\begin{flalign*}
\nr{Dv_{j}}_{L^{\frac{np}{n-2\beta}}(B_{s})}\le \frac{c(\texttt{data}_{\texttt{q}},\beta)}{(s-t)^{\bar{\theta}}}\left[1+\nr{Dv_{j}}_{L^{\frac{np}{n-2\beta}}(B_{s})}^{\frac{(q-p)n}{2\beta p}}\nr{Dv_{j}}_{L^{p}(B_{s})}^{\frac{(2\beta q-(q-p)n)}{2\beta p}}+\nr{D\psi}_{L^{q}(B_{s})}^{\frac{q}{p}}\right]
\end{flalign*}
so, for $\beta\in \left(\frac{n(q-p)}{2p},\frac{\alpha}{2}\right)$ and \eqref{pq} we see that $\frac{(q-p)n}{2\beta p}<1$. This allows using Young inequality with conjugate exponents $\frac{2\beta p}{(q-p)n}$ and $\frac{2\beta p}{(2\beta p-(q-p)n)}$ to obtain
\begin{flalign}\label{11}
\nr{Dv_{j}}_{L^{\frac{np}{n-2\beta}}(B_{t})}\le \frac{1}{2}\nr{Dv_{j}}_{L^{\frac{np}{n-2\beta}}(B_{s})}+\frac{c(\texttt{data}_{\texttt{q}},\beta)}{(s-t)^{\theta}}\left[1+\nr{D\psi}_{L^{q}(B_{s})}^{\frac{q}{p}}+\nr{Dv_{j}}_{L^{p}(B_{s})}^{\gamma}\right],
\end{flalign}
where we set $\theta:=\frac{2\bar{\theta}\beta p}{(2\beta p-(q-p)n)}$ and $\gamma:=\frac{(2\beta q-(q-p)n)}{(2\beta p-(q-p)n)}$. Since inequality \eqref{11} holds true for all $\frac{\rr}{2}\le t<s\le \rr$, we can use Lemma \ref{l5} to end up with
\begin{flalign}\label{12}
\nr{Dv_{j}}_{L^{\frac{np}{n-2\beta}}(B_{\rr/2})}\le \frac{c(\texttt{data}_{\texttt{q}},\beta)}{\rr^{\theta}}\left[1+\nr{D\psi}_{L^{q}(B_{\rr})}^{\frac{q}{p}}+\nr{Dv_{j}}_{L^{p}(B_{\rr})}^{\gamma}\right],
\end{flalign}
with $\theta=\theta(n,p,q,\alpha,\beta)$ and $\gamma=\gamma(n,p,q,\alpha,\beta)$ as in \eqref{11}. At this stage, we jump back to problem \eqref{opj} and notice that by $\eqref{35}_{2}$, $\tilde{v}_{j}\in \mathcal{K}_{\psi,\tilde{v}_{j}}^{*}(B_{\rr})$. Thus, using the minimality of $v_{j}$ in class $\mathcal{K}^{*}_{\psi,\tilde{v}_{j}}(B_{\rr})$ we get
\begin{flalign}
\nu\int_{B_{\rr}}\snr{Dv_{j}}^{p} \ \dx \stackrel{\eqref{assfj}_{1}}{\le}& \int_{B_{\rr}}F_{j}(x,Dv_{j}) \ \dx \stackrel{\eqref{35}_{2}}{\le} \int_{B_{\rr}}F_{j}(x,D\tilde{v}_{j}) \ \dx\nonumber \\
\stackrel{\eqref{fj}}{=}&\int_{B_{\rr}}F(x,D\tilde{v}_{j}) \ \dx +\frac{\varepsilon_{j}}{q}\int_{B_{\rr}}(1+\snr{D\tilde{v}_{j}}^{2})^{\frac{q}{2}} \ \dx\nonumber \\
\stackrel{\eqref{eps}}{\le}&\int_{B_{\rr}}F(x,D\tilde{v}_{j}) \ \dx +o(j^{-1})\stackrel{\eqref{35}_{3}}{\le}\int_{B_{\rr}}F(x,Dv) \ \dx +1,\label{13}
\end{flalign}
thus 
\begin{flalign}\label{15}
v_{j}\rightharpoonup_{j\to \infty} \tilde{v} \ \ \mbox{in} \ \ W^{1,p}(B_{\rr})\quad \mbox{and}\quad v_{j}\to_{j\to \infty} \tilde{v} \ \ \mbox{in} \ \ L^{p}(B_{\rr}).
\end{flalign}
Merging \eqref{13}, $\eqref{assfj}_{1}$ and \eqref{12} we get
\begin{flalign}\label{14}
\nr{Dv_{j}}_{L^{\frac{np}{n-2\beta}}(B_{\rr/2})}\le \frac{c(\texttt{data}_{\texttt{q}},\beta)}{\rr^{\theta}}\left[1+\nr{D\psi}_{L^{q}(B_{\rr})}^{\frac{q}{p}}+\left(\int_{B_{\rr}}F(x,Dv) \ \dx\right)^{\frac{\gamma}{p}}\right],
\end{flalign}
thus, by $\eqref{15}_{1}$, \eqref{14} and weak lower semicontinuity, we can conclude that
\begin{flalign*}
\nr{D\tilde{v}}_{L^{\frac{np}{n-2\beta}}(B_{\rr/2})}\le& \liminf_{j\to \infty}\nr{Dv_{j}}_{L^{\frac{np}{n-2\beta}}(B_{\rr/2})}\nonumber \\
\le& \frac{c(\texttt{data}_{\texttt{q}},\beta)}{\rr^{\theta}}\left[1+\nr{D\psi}_{L^{q}(B_{\rr})}^{\frac{q}{p}}+\left(\int_{B_{\rr}}F(x,Dv) \ \dx\right)^{\frac{\gamma}{p}}\right].
\end{flalign*}
At this point we only need to show that $\tilde{v}(x)=v(x)$ for a.e. $x\in B_{\rr}$. To do so, we notice that by $\eqref{15}$, $\eqref{35}_{1,2}$ and the weak continuity of the trace operator, there holds that
\begin{flalign}\label{16}
\tilde{v}(x)\ge \psi(x) \ \ \mbox{for a.e.} \ \ x\in B_{\rr}\quad \mbox{and} \quad \left.\tilde{v}\right|_{\partial B_{\rr}}=\left.v\right|_{\partial B_{\rr}}.
\end{flalign}
Moreover, by $\eqref{35}_{3}$, \eqref{eps}, $\eqref{15}_{1}$, weak lover semicontinuity and the minimality of the $v_{j}$'s we have
\begin{flalign}\label{17}
\int_{B_{\rr}}F(x,D\tilde{v}) \ \dx \le &\liminf_{j\to \infty}\int_{B_{\rr}}F(x,Dv_{j}) \ \dx \le \liminf_{j\to \infty}\int_{B_{\rr}}F_{j}(x,Dv_{j}) \ \dx \nonumber\\
\le &\liminf_{j\to \infty}\left[\int_{B_{\rr}}F(x,D\tilde{v}_{j}) \ \dx+o(j^{-1})\right]=\int_{B_{\rr}}F(x,Dv) \ \dx.
\end{flalign}
Collecting estimates \eqref{14} and \eqref{17} and keeping in mind \eqref{sconv} and \eqref{16} we can conclude that $\tilde{v}=v$ a.e. in $B_{\rr}$ and
\begin{flalign}\label{19}
\nr{Dv}_{L^{\frac{np}{n-2\beta}}(B_{\rr/2})}\le \frac{c(\texttt{data}_{\texttt{q}},\beta)}{\rr^{\theta}}\left[1+\int_{B_{\rr}}\left[F(x,Dv)+\snr{D\psi}^{q}\right] \ \dx\right]^{\tilde{\gamma}},
\end{flalign}
with $\theta=\theta(n,p,q,\alpha,\beta)$ and $\tilde{\gamma}=\tilde{\gamma}(n,p,q,\alpha,\beta)$. Recalling that $\beta\in \left(0,\frac{\alpha}{2}\right)$ is arbitrary, using H\"older inequality in \eqref{19} we obtain \eqref{18}, where $d\in \left[1,\frac{np}{n-\alpha}\right)$ is arbitrary. Finally, a standard covering argument renders that $Dv\in L^{d}_{loc}(\Omega,\mathbb{R}^{n})$ and we are done.
\begin{remark}
\emph{For transforming \eqref{8} into \eqref{9}, we implicitely used that, for any map $w\in W^{1,p}(B_{\rr})$, $\rr\in (0,1]$ such that $V_{\mu,p}(Dv)\in L^{2t}(B_{\rr},\mathbb{R}^{n})$ for some $t>0$ there holds}
\begin{flalign}\label{99}
\int_{B_{\rr}}\snr{Dw}^{pt} \ \dx \le c(n,p,t)\left[1+\int_{B_{\rr}}\snr{V_{\mu,p}(Dw)}^{2t} \ \dx\right].
\end{flalign}
\emph{Inequality \eqref{99} is trivial when $p\ge 2$, while for $1<p<2$ we have}
\begin{flalign*}
\int_{B_{\rr}}&\snr{V_{\mu,p}(Dw)}^{2t} \ \dx =\int_{B_{\rr}}\left[(\mu^{2}+\snr{Dw}^{2})^{\frac{p-2}{2}}\snr{Dw}^{2}\right]^{t} \ \dx\ge 2^{\frac{t(p-2)}{2}} \int_{B_{\rr}\cap \{\snr{Dw}\ge \mu\}}\snr{Dw}^{pt} \ \dx,
\end{flalign*}
\emph{therefore}
\begin{flalign*}
\int_{B_{\rr}}\snr{Dw}^{pt} \ \dx \le& \int_{B_{\rr}\cap\{\snr{Dw}\ge \mu\}}\snr{Dw}^{pt} \ \dx +c(n)\mu^{pt}\nonumber \\
\le &2^{\frac{t(2-p)}{2}}\int_{B_{\rr}}\snr{V_{\mu,p}(Dw)}^{2t} \ \dx +c(n),
\end{flalign*}
\emph{where we also used that $\mu\in [0,1]$.}
\end{remark}

\begin{remark}
\emph{The arbitrariety of $\beta$ allows a corresponding choice of $d\in \left[1,\frac{np}{n-\alpha}\right)$, therefore we will translate any dependency of the constants from $\beta$ into the one from $d$, i.e.: $c(\texttt{data}_{\texttt{q}},\beta)$ becomes $c(\texttt{data}_{\texttt{q}},d)$. This justifies the final dependencies of the constant $c$ appearing in \eqref{18}.}
\end{remark}
\subsection{Fractional differentiability}\label{fra}
Let $v\in \mathcal{K}_{\psi,g}(\Omega)$ be the solution of problem \eqref{op}. Combining assumption \eqref{pq} and the outcome of Theorem \ref{t1}, we see that $q<\frac{np}{n-\alpha}$, so, in particular, $Dv\in L^{q}_{loc}(\Omega,\mathbb{R}^{n})$. This means that we no longer need the approximating problems to study the fractional differentiability of $Dv$. In fact, let $B_{\rr}\Subset \Omega$ be any ball with $\rr\in (0,1]$ and notice that, as in Section \ref{ex}, it follows that $v$ is the solution of
\begin{flalign}\label{opq}
\mathcal{K}^{*}_{\psi,v}(B_{\rr})\ni w\mapsto \min \int_{B_{\rr}}F(x,Dw) \ \dx,
\end{flalign}
where $\mathcal{K}^{*}_{\psi,v}(B_{\rr})$ is defined as in \eqref{kjd}, with $v$ instead of $\tilde{v}_{j}$. As for \eqref{vej}, we see that the variational inequality
\begin{flalign}\label{ve}
\int_{B_{\rr}}F(x,Dv)\cdot (Dw-Dv) \ \dx \ge 0
\end{flalign}
holds for all $w\in \mathcal{K}^{*}_{\psi,v}(B_{\rr})$ and the map $w:=v+\tau_{-h}(\eta^{2}\tau_{h}(v-\psi))$ is an admissible test function. Here, $\eta\in C^{1}_{c}(B_{\rr})$ is such that
\begin{flalign*}
\chi_{B_{\rr/2}}\le \eta\le \chi_{B_{3\rr/4}}\quad \mbox{and}\quad \snr{D\eta}\le \frac{4}{\rr}
\end{flalign*}
and $\snr{h}\le \frac{1}{1000}\min\left\{\frac{\rr}{4},\dist(\partial B_{\rr},\partial \Omega)\right\}$. We can repeat exactly the same procedure outlined in Section \ref{pt1} with $v_{j}$, $\tilde{v}_{j}$ both replaced by $v$, to end up with
\begin{flalign}\label{20}
\nr{V_{\mu,p}(Dv)}_{W^{\beta,2}(B_{\rr/2})}\le \frac{c}{\rr^{\frac{n+1+2\beta}{2}}}\left[1+\left(\int_{B_{\rr}}\left[\snr{Dv}^{q}+\snr{D\psi}^{q}\right] \ \dx\right)^{\frac{1}{2}}\right]
\end{flalign}
for all $\beta\in \left(0,\frac{\alpha}{2}\right)$, with $c=c(\texttt{data}_{\texttt{q}},\beta)$. Via a standard covering argument, we can conclude that $V_{\mu,p}(Dv)\in W^{2,\beta}_{loc}(\Omega,\mathbb{R}^{n})$ for all $\beta \in \left(0,\frac{\alpha}{2}\right)$ and the proof is complete.
\section{Proof of Theorem \ref{t2}}\label{pt2}
The proof of Theorem \ref{t2} requires certain assumptions which are stronger that \eqref{assf0}-\eqref{assf}. Precisely, we need a Carath\'eodery integrand $F\colon \Omega\times \mathbb{R}^{n}\to \mathbb{R}$ satisfying
\begin{flalign}\label{assfh}
\begin{cases}
\ z\mapsto F(\cdot,z)\in C^{1}_{loc}(\mathbb{R}^{n})\cap C^{2}_{loc}(\mathbb{R}^{n}\setminus \{0\})\\
\ x\mapsto \partial_{z}F(x,z)\in W^{1,s}(\Omega,\mathbb{R}^{n}) \ \ \mbox{for all} \ z\in \mathbb{R}^{n}\\
\ \nu (\mu^{2}+\snr{z}^{2})^{\frac{p}{2}}\le F(x,z)\le L\left[(\mu^{2}+\snr{z}^{2})^{\frac{p}{2}}+(\mu^{2}+\snr{z}^{2})^{\frac{q}{2}}\right]\\
\ \nu (\mu^{2}+\snr{z}^{2})^{\frac{p-2}{2}}\snr{\xi}^{2}\le \partial_{z}^{2}F(x,z)\xi\cdot \xi \\
\ \snr{\partial^{2}_{z}F(x,z)}\le L\left[(\mu^{2}+\snr{z}^{2})^{\frac{p-2}{2}}+(\mu^{2}+\snr{z}^{2})^{\frac{q-2}{2}}\right]\\
\ \snr{\partial^{2}_{x,z}F(x,z)}\le Lh(x)\left[(\mu^{2}+\snr{z}^{2})^{\frac{p-1}{2}}+(\mu^{2}+\snr{z}^{2})^{\frac{q-1}{2}}\right],\\
\ x\mapsto F(x,0)\equiv \const
\end{cases}
\end{flalign}
for all $x\in \Omega$ and $z,\xi\in \mathbb{R}^{n}$. In \eqref{assfh}, $0<\nu\le L$ are absolute constants, and 
\begin{flalign}\label{assh}
0\le h(\cdot)\in L^{s}(\Omega) \quad \mbox{with} \ s>n,
\end{flalign}
the exponents $(p,q)$ match condition
\begin{flalign}\label{pqh}
1<p<q<p\left(1+\frac{1}{n}-\frac{1}{s}\right).
\end{flalign}
Concerning the obstacle, we shall assume that 
\begin{flalign}\label{obsh}
\psi \in W^{2,\infty}(\Omega).
\end{flalign}
When $1<p<2\le q$ or $1<p<q<2$, we also ask that
\begin{flalign}\label{obsh1}
\mathcal{H}(D\psi):=\max\left\{(\mu^{2}+\snr{D\psi}^{2})^{\frac{p-2}{2}},(\mu^{2}+\snr{D\psi}^{2})^{\frac{q-2}{2}}\right\}\in L^{s}(\Omega),
\end{flalign}
where $s$ is the same as in \eqref{assh}. Notice that the hypotheses considered in Section \ref{re} are trivially satisfied. Moreover, as before, assumption $\eqref{assfh}_{4}$ implies that
\begin{flalign}\label{98}
z\mapsto F(\cdot,z)\quad \mbox{is strictly convex}.
\end{flalign}
We just spend a few lines commenting on the relation between \eqref{pq} and \eqref{pqh}. First, notice that as in \cite[Remark 1.4]{demi}, we directly see that, whenever $x_{1},x_{2}\in \Omega$ and $z\in \mathbb{R}^{n}$, there holds 
\begin{flalign}\label{37}
\snr{\partial_{z} F(x_{1},z)-\partial_{z} F(x_{2},z)}\le c\nr{h}_{L^s(\Omega)}\left[(\mu^2+|z|^2)^{\frac{p-1}{2}}+(\mu^2+|z|^2)^{\frac{q-1}{2}}\right] |x_{1}-x_{2}|^{1-\frac{n}{s}},
\end{flalign}
for $c=c(n,L,p,q,s)$, which is $\eqref{assf}_{4}$ with $\alpha=1-\frac{n}{s}$ and such value of $\alpha$ turns \eqref{pqh} in \eqref{pq}. Therefore, if assumptions \eqref{assfh}-\eqref{obsh} are satisfied and if the solution $v\in \mathcal{K}_{\psi,g}(\Omega)$ of problem \eqref{op} satisfies \eqref{nolavq} for all open subsets $\tilde{\Omega}\Subset \Omega$, then
\begin{flalign*}
Dv\in L^{q}_{loc}(\Omega,\mathbb{R}^{n})\quad \mbox{and}\quad V_{\mu,p}(Dv)\in W^{2,\beta}_{loc}(\Omega,\mathbb{R}^{n}) \ \ \mbox{for all} \ \beta\in \left(0,\frac{1}{2}\left(1-\frac{n}{s}\right)\right)
\end{flalign*}
by Theorem \ref{t1}. Finally, integrating \eqref{37} and using that, by $\eqref{assfh}_{7}$, $F(x_{1},0)=F(x_{2},0)$, we can conclude with
\begin{flalign}\label{38}
\snr{F(x_{1},z)-F(x_{2},z)}\le c\nr{h}_{L^{s}(\Omega)}\left[(\mu^2+\snr{z}^2)^{\frac{p}{2}}+(\mu^{2}+\snr{z}^2)^{\frac{p}{2}}\right]\snr{x_{1}-x_{2}}^{1-\frac{n}{s}},
\end{flalign}
with $c= c(n,L,p,q,s)$.
\begin{remark}
\emph{Assumption $\eqref{assfh}_{7}$ comes essentially for free. In fact, if $F\colon \Omega\times \mathbb{R}^{n}\to \mathbb{R}$ is any integrand as in \eqref{assfh} with $F(x,0)$ not constant for all $x\in \Omega$, then we can consider the shifted function $\tilde{F}(x,z):=F(x,z)-F(x,0)+2L\mu^{p}$. It is then straightforward to check that $\tilde{F}$ matches \eqref{assfh} (with $4L$ instead of $L$) and, by construction, $\tilde{F}(x,0)$ is constantly equal to $2L\mu^{p}$.}
\end{remark}

\begin{remark}
\emph{Assumption \eqref{obsh1} has a significant role only to treat the degenerate case $\mu=0$ when
\begin{flalign}\label{xxx}
\mbox{either}\quad 1<p<q<2\quad \mbox{or}\quad 1<p<2\le q.
\end{flalign}
If $\mu>0$ and \eqref{xxx} holds, we can neglect it up to accept a dependency from $\mu^{-1}$ of the constants appearing in the forthcoming estimates.}
\end{remark}
\subsection{Approximating problems} As in \cite[Section 4]{demi}, we regularize the integrand in \eqref{assfh} and correct its non-standard growth behavior in the following way.  Let $B_{\rr}\subset B_{r}\Subset \Omega $ be two concentric balls with $0<\rr<r\le 1$. We consider a standard family of symmetric mollifiers $\{\rho_{\delta}\}_{\delta}$ for $\delta >0$ such that $\delta < \min\{\dist(\partial B_r,\partial \Omega),1\}/16$, that is
\begin{flalign}\label{molli2}
\rho \in C^{\infty}_{\rm{c}}(B_1(0))\,,\quad \nr{\rho}_{L^{1}(\mathbb{R}^{n})} =1\,,\quad \rho_{\delta}(x):=  \delta^{-n}\rho\left(x/\delta\right)\,,\quad B_{3/4}\subset \textnormal{supp}(\rho).
\end{flalign}
We then define
\eqn{molli1}
$$
F_{\delta}(x,z):=(F*\rho_{\delta})(x,z)=\mint_{B_{1}}\mint_{B_{1}}F(x+\delta \tilde y,z+\delta y) \rho(\tilde{y})  \rho(y)\d\tilde{y} \dy 
\;,
$$
for all $(x,z)\in \bar{B}_{r}\times \mathbb{R}^{n}$. By the very definition in \eqref{molli1} and \eqref{38}, we have
\begin{flalign}\label{39}
F_{\delta}(x,z) \to F(x,z) \quad \mbox{uniformly on compact subsets of $\bar{B}_{r}\times \mathbb{R}^{n}$ as $\delta \to 0$}.
\end{flalign}
We further define 
\begin{flalign}\label{40}
h_{\delta}(x):=(h*\phi_{\delta})(x)=\mint_{B_{1}} h(x+\delta \tilde y)\rho(\tilde{y}) \d\tilde{y}, \quad \mu_{\delta}:=\mu+\delta, \quad H_{\delta}(z):=\mu^{2}_{\delta}+\snr{z}^{2},
\end{flalign}
for $x \in \bar{B}_{r}$ and $z\in \mathbb{R}^{n}$. 
Next, we use that $v$ satisfies \eqref{nolavq} which, by the results in Section \ref{re}, renders a sequence $\{\tilde{v}_{j}\}_{j\in \N} \subset W^{1,p}_{loc}(\Omega)\cap W^{1,q}(B_{r})$ such that 
\begin{flalign}\label{44}
\tilde{v}_{j}\rightharpoonup_{j\to \infty} v \ \mbox{in} \ W^{1,p}_{loc}(\Omega), \quad \tilde{v}_{j}\ge \psi \ \ \mbox{a.e. in}  \ B_{r},\quad  \int_{B_{r}}F(x,D\tilde{v}_{j}) \ \dx\to_{j\to \infty}\int_{B_{r}}F(x,Dv) \ \dx.
\end{flalign}
For simplicity, define
\begin{flalign*}
&\mathcal{H}_{\delta}(D\psi):=\max\left\{H_{\delta}(D\psi)^{\frac{p-2}{2}},H_{\delta}(D\psi)^{\frac{q-2}{2}}\right\}.
\end{flalign*}
Recalling also \eqref{obsh1}, we trivially observe that
\begin{flalign}\label{obsh2}
\nr{\mathcal{H}_{\delta}(D\psi)}_{L^{s}(\Omega)}\le c(n,s)\left[1+\nr{\mathcal{H}(D\psi)}_{L^{s}(\Omega)}\right].
\end{flalign}
We then set, for $(x, z) \in \bar{B}_{r}\times \mathbb{R}^{n}$,
\begin{flalign*}
F_{j,\delta}(x,z):=F_{\delta}(x,z)+\frac{\varepsilon_{j}}{q}(\mu_{\delta}^2+\snr{z}^{2})^{\frac{q}{2}} \quad \mbox{and} \quad \mathcal{F}_{j,\delta}(w, B_{r}):=\int_{B_{r}}F_{j,\delta}(x,Dw)  \dx\;,
\end{flalign*}
with
\begin{flalign}\label{43}
\varepsilon_{j}:=\left(1+j+\nr{D\tilde{v}_{j}}^{q}_{L^{q}(B_{r})}\right)^{-1} \Rightarrow 
\frac{\varepsilon_{j}}{q}\int_{B_{r}}(\mu^{2}+\snr{D\tilde{v}_{j}}^{2})^{\frac{q}{2}}  \dx\to_{j\to \infty}0\;.
\end{flalign}
Finally, we define $m:=\frac{s}{s-2}$. From \eqref{assfh}, \eqref{molli2}, \eqref{molli1} and some convolution arguments, see \cite[Section 4]{demi}, we see that the integrand $F_{j,\delta}$ satisfies
\begin{flalign}\label{assfjd}
\begin{cases}
\ c^{-1}H_{\delta}(z)^{\frac{p}{2}}+ \frac{\varepsilon_{j}}{q}H_{\delta}(z)^{\frac{q}{2}}\le F_{j,\delta}(x,z) \leq  c\left[H_{\delta}(z)^{\frac{p}{2}}+ H_{\delta}(z)^{\frac{q}{2}}\right]\\ 
\ \left[c^{-1}[H_{\delta}(z)]^{\frac{p-2}{2}}+\frac{\eps_j}{c}[H_{\delta}(z)]^{\frac{q-2}{2}}\right]\snr{\xi}^{2}\le \partial_{z}^{2}F_{j,\delta}(x,z)\, \xi\cdot\xi\\ 
\ \snr{\partial^{2}_{z}F_{j,\delta}(x,z)}\le c\left[ H_{\delta}(z)^{\frac{p-2}{2}}+   H_{\delta}(z)^{\frac{q-2}{2}}\right]\\ 
\ \snr{\partial_{x,z}^{2} F_{j,\delta}(x,z)}\le c h_{\delta}(x)\left[H_{\delta}(z)^{\frac{p-1}{2}}+ H_{\delta}(z)^{\frac{q-1}{2}} \right] \\ 
\ \snr{\partial_{x,z}^{2}F_{j,\delta}(x,z)}\le c\nr{h_{\delta}}_{L^{\infty}(B_{r})}\left[H_{\delta}(z)^{\frac{p-1}{2}}+H_{\delta}(z)^{\frac{q-1}{2}}\right]\\
\ \nr{h_{\delta}}_{L^{s}(B_{r})}\le \nr{h}_{L^{s}(\Omega)},
\end{cases}
\end{flalign}
for all $x\in B_{r}$ and $z,\xi\in \mathbb{R}^{n}$ with $c=c(n,\nu,L,p,q)$. We stress that $\eqref{assfjd}_{3}$ in particular implies strict convexity and the monotonicity inequality
\begin{flalign}\label{53}
(\partial_{z}F_{j,\delta}(x,z_{1})&-\partial_{z}F_{j,\delta}(x,z_{2}))\cdot (z_{1}-z_{2})\nonumber \\
\ge& c(n,\nu,L,p,q)\left[\snr{V_{\mu_{\delta},p}(z_{1})-V_{\mu_{\delta},p}(z_{2})}^{2}+\varepsilon_{j}\snr{V_{\mu_{\delta},q}(z_{1})-V_{\mu_{\delta},q}(z_{2})}^{2}\right]\ge 0,
\end{flalign}
see also Lemma \ref{l1}. Let us consider the obstacle problem
\begin{flalign}\label{opjd}
\mathcal{K}_{\psi,\tilde{v}_{j}}^{*}(B_{r})\ni w\mapsto \min \mathcal{F}_{j,\delta}(w,B_{r}),
\end{flalign}
where $\mathcal{K}_{\psi,\tilde{v}_{j}}^{*}(B_{r})$ is the same as in \eqref{kjd}. By direct methods (cf. Section \ref{ex}) we know that there exists a unique solution $v_{j,\delta}\in \mathcal{K}^{*}_{\tilde{v}_{j},\psi}(B_{r})$ of problem \eqref{opjd}, satisfying the variational inequality
\begin{flalign}\label{46}
\int_{B_{r}}\partial_{z} F_{j,\delta}(x,Dv_{j,\delta})\cdot (Dw-Dv_{j,\delta}) \ \dx \ge 0 \quad \mbox{for all} \ w\in \mathcal{K}_{\tilde{v}_{j},\psi}^{*}(B_{r}).
\end{flalign}
Moreover, recalling the discussion in Section \ref{ma}, $v_{j,\delta}\in \mathcal{K}_{\psi,\tilde{v}_{j}}^{*}(B_{r})$ is a local minimizer of the variational integral $\mathcal{F}_{j,\delta}$ with obstacle constraint, thus assumptions \eqref{assfjd}, \eqref{obsh}, \eqref{44} together with Proposition \ref{regst} assure that
\begin{flalign}\label{48}
v_{j,\delta}\in W^{1,\infty}_{loc}(B_{r})\cap W^{2,2}_{loc}(B_{r})\quad \mbox{and}\quad \partial_{z}F_{j,\delta}(\cdot,Dv_{j,\delta})\in W^{1,2}_{loc}(B_{r},\mathbb{R}^{n}).
\end{flalign}
\subsection{Linearization}
We aim to recover an integral identity from the variational inequality \eqref{46}. To do so, we follow the arguments in \cite{fuli,fumi} and pick a cut-off function $\eta\in C^{1}_{c}(B_{r})$ so that $0\le \eta(x)\le 1$ for all $x\in B_{r}$ and, for $\varsigma\in (0,\infty)$, we take a function $h_{\varsigma}\in C^{1}(\mathbb{R})$ satisfying 
\begin{flalign}\label{54}
\begin{cases}
\ 0\le h_{\varsigma}(t)\le 1,\\
\ h'_{\varsigma}(t)\le 0 \ \ \mbox{for all} \ t\in \mathbb{R} \\
\ h_{\varsigma}(t)=1 \ \ \mbox{if} \ t\in (-\infty,\varsigma)\\ 
\ h_{\varsigma}(t)=0 \ \ \mbox{if} \ t\in (2\varsigma,+\infty).
\end{cases}
\end{flalign}
The map $w_{j,\delta}^{\varsigma}:=v_{j,\delta}+\eta h_{\varsigma}(v_{j,\delta}-\psi)$ clearly belongs to $\mathcal{K}_{\tilde{v}_{j},\psi}^{*}(B_{r})$, thus it is an admissible test in \eqref{46}. We then get
\begin{flalign*}
\int_{B_{r}}\partial_{z}F_{j,\delta}(x,Dv_{j,\delta})\cdot D(\eta h_{\varsigma}(v_{j,\delta}-\psi)) \ \dx \ge 0,
\end{flalign*}
so, by Riesz representation theorem there exists a non-negative Radon measure $\lambda_{j,\delta}$ such that
\begin{flalign}\label{50}
\int_{B_{r}}\partial_{z}F_{j,\delta}(x,Dv_{j,\delta})\cdot D(\eta h_{\varsigma}(v_{j,\delta}-\psi)) \ \dx =\int_{B_{r}}\eta \d \lambda_{j,\delta}.
\end{flalign}
Notice that, as shown in \cite[Section 3]{fuli}, $\lambda_{j,\delta}$ does not depend on $\varsigma$. Let us find a suitable representative for the measure $\lambda_{j,\delta}$. From \eqref{50}, \eqref{53} and $\eqref{54}_{2}$ we estimate
\begin{flalign*}
\int_{B_{r}}&\partial_{z}F_{j,\delta}(x,Dv_{j,\delta})\cdot D(\eta(h_{\varsigma}(v_{j,\delta}-\psi))) \ \dx =\int_{B_{r}}h_{\varsigma}(v_{j,\delta}-\psi)\partial_{z}F_{j,\delta}(x,Dv_{j,\delta})\cdot D\eta \ \dx \nonumber \\
&+\int_{B_{r}}\eta h'_{\varsigma}(v_{j,\delta}-\psi)\partial_{z}F_{j,\delta}(x,Dv_{j,\delta})\cdot (Dv_{j,\delta}-D\psi) \ \dx \nonumber \\
\le &\int_{B_{r}}h_{\varsigma}(v_{j,\delta}-\psi)\partial_{z}F_{j,\delta}(x,Dv_{j,\delta})\cdot D\eta \ \dx\nonumber \\
&+\int_{B_{r}}\eta h'_{\varsigma}(v_{j,\delta}-\psi)\partial_{z}F_{j,\delta}(x,D\psi)\cdot (Dv_{j,\delta}-D\psi) \ \dx\nonumber \\
=&\int_{B_{r}}h_{\varsigma}(v_{j,\delta}-\psi)\left[\partial_{z}F_{j,\delta}(x,Dv_{j,\delta})-\partial_{z}F_{j,\delta}(x,D\psi)\right]\cdot D\eta \ \dx\nonumber \\
&+\int_{B_{r}}\partial_{z}F_{j,\delta}(x,D\psi) \cdot D(\eta h_{\varsigma}(v_{j,\delta}-\psi)) \ \dx =:\mbox{(I)}+\mbox{(II)}.
\end{flalign*}
Set $\mathcal{S}(B_{r}):=\left\{x\in B_{r}\colon v_{j,\delta}(x)=\psi(x)\right\}$. Using the position in \eqref{54}, we get that
\begin{flalign}\label{55}
\mbox{(I)}\to_{\varsigma\to 0}\int_{\mathcal{S}(B_{r})}\left[\partial_{z}F_{j,\delta}(x,Dv_{j,\delta})-\partial_{z}F_{j,\delta}(x,D\psi)\right]\cdot D\eta \ \dx=0,
\end{flalign}
since $Dv_{j,\delta}=D\psi$ on $\mathcal{S}(B_{r})$. Concerning term (II), by \eqref{obsh} and \eqref{48} we can integrate by parts, thus getting
\begin{flalign}\label{56}
\mbox{(II)}=&-\int_{B_{r}}\diver(\partial_{z}F_{j,\delta}(x,D\psi))(\eta h_{\varsigma}(v_{j,\delta}-\psi)) \ \dx\nonumber \\
&\to_{\varsigma\to 0}-\int_{\mathcal{S}(B_{r})}\diver(\partial_{z}F_{j,\delta}(x,D\psi)) \eta \ \dx.
\end{flalign}
Merging \eqref{50}, \eqref{55} and \eqref{56} we obtain
\begin{flalign*}
\int_{B_{r}}\eta \d \lambda_{j,\delta}\le \int_{B_{r}}\chi_{\mathcal{S}(B_{r})}(-\diver(\partial_{z}F_{j,\delta}(x,D\psi))\eta) \ \dx,
\end{flalign*}
for all $\eta\in C^{1}_{c}(B_{r})$ such that $0\le \eta\le 1$. This implies that  
\begin{flalign*}
\chi_{\mathcal{S}(B_{r})}(-\diver(\partial_{z}F_{j,\delta}(x,D\psi)))\ge 0\quad \mbox{a.e. in} \ B_{r} 
\end{flalign*}
and that there exists a density function $\theta_{j,\delta}\colon B_{r}\to [0,1]$ such that
\begin{flalign}\label{70}
\d\lambda_{j,\delta}=\theta_{j,\delta}(x)\chi_{\mathcal{S}(B_{r})}(-\diver(\partial_{z}F_{j,\delta}(x,D\psi)))\dx.
\end{flalign}
Set $f_{j,\delta}(x):=\theta_{j,\delta}(x)\chi_{\mathcal{S}(B_{r})}(-\diver(\partial_{z}F_{j,\delta}(x,D\psi)))$. Notice that by \eqref{obsh}, $f_{j,\delta}$ exists almost everywhere in $B_{r}$, thus we can compute
\begin{flalign*}
\diver(\partial_{z}F_{j,\delta}(x,D\psi))=\sum_{i=1}^{n}\left[\partial^{2}_{x_{i},z_{i}}F_{j,\delta}(x,D\psi)+\sum_{l=1}^{n}\partial_{z_{l},z_{i}}^{2}F(x,D\psi)D^{2}_{x_{l},x_{i}}\psi\right], 
\end{flalign*}
so by $\eqref{assfjd}_{3,4}$ there holds that
\begin{flalign}\label{60}
\snr{f_{j,\delta}(x)}\le&\snr{\diver(\partial_{z}F_{j,\delta}(x,D\psi)))}\le c\nr{h_{\delta}}_{L^{\infty}(B_{r})}\left[H_{\delta}(D\psi)^{\frac{p-1}{2}}+H_{\delta}(D\psi)^{\frac{q-1}{2}}\right]\nonumber \\
&+c\left[H_{\delta}(D\psi)^{\frac{p-2}{2}}+H_{\delta}(D\psi)^{\frac{q-2}{2}}\right]\nonumber \\
\le& c\nr{h_{\delta}}_{L^{\infty}(B_{r})}\left[H_{\delta}(\nr{D\psi}_{L^{\infty}(B_{r})})^{\frac{p-1}{2}}+H_{\delta}(\nr{D\psi}_{L^{\infty}(B_{r})})^{\frac{q-1}{2}}\right]\nonumber \\
&+c\left[H_{\delta}(\nr{D\psi}_{L^{\infty}(B_{r})})^{\frac{p-2}{2}}+H_{\delta}(\nr{D\psi}_{L^{\infty}(B_{r})})^{\frac{q-2}{2}}+\mu_{\delta}^{p-2}+\mu_{\delta}^{q-2}\right],
\end{flalign}
where $c=c(n,\nu,L,p,q)$, while, by $\eqref{assfjd}_{3,5}$, we have
\begin{flalign}\label{62}
\snr{f_{j,\delta}(x)}\le& ch_{\delta}(x)\left[H_{\delta}(\nr{D\psi}_{L^{\infty}(B_{r})})^{\frac{p-1}{2}}+H_{\delta}(\nr{D\psi}_{L^{\infty}(B_{r})})^{\frac{q-1}{2}}\right]\nonumber \\
&+c\nr{D^{2}\psi}_{L^{\infty}(B_{r})}\max\left\{H_{\delta}(D\psi)^{\frac{p-2}{2}},H_{\delta}(D\psi)^{\frac{q-2}{2}}\right\},
\end{flalign}
with $c=c(n,\nu,L,p,q)$. Estimate \eqref{60} implies that 
\begin{flalign}\label{61}
f_{j,\delta}\in L^{\infty}(B_{r}),
\end{flalign}
and, by \eqref{62}, \eqref{assh}, \eqref{obsh1}, $\eqref{assfjd}_{6}$, and \eqref{obsh2} we see that
\begin{flalign}\label{64}
\nr{f_{j,\delta}}_{L^{s}(B_{r})}\le c\left[1+\nr{h}_{L^{s}(\Omega)}+\nr{\mathcal{H}(D\psi)}_{L^{s}(\Omega)}\right]=:c(n,\nu,L,p,q,\nr{\psi}_{W^{2,\infty}(\Omega)})\mathcal{A}_{h,\psi}.
\end{flalign}
This means that the $f_{j,\delta}$'s have uniformly bounded $L^{s}$-norm. Once identified $\lambda_{j,\delta}$ we turn back to \eqref{50}, which, as in \cite{fu}, implies that
\begin{flalign}\label{57}
\int_{B_{r}}\partial_{z}F_{j,\delta}(x,Dv_{j,\delta})\cdot D\eta \ \dx=\int_{B_{r}}\eta \ \d\lambda_{j,\delta}
\end{flalign}
for all $\eta\in C^{1}_{c}(B_{r})$ such that $0\le \eta\le 1$ on $B_{r}$. Now \eqref{57}, \eqref{70}, \eqref{61} and standard density arguments lead to
\begin{flalign}\label{58}
\int_{B_{r}}\partial_{z}F_{j,\delta}(x,Dv_{j,\delta})\cdot Dw \ \dx=\int_{B_{r}}f_{j,\delta}w\d x\quad \mbox{for all} \ w\in W^{1,q}_{0}(B_{r}).
\end{flalign}

\subsection{Caccioppoli inequality}
By virtue of \eqref{48}, we can differentiate equation \eqref{58} and sum over $i\in \{1,\cdots, n\}$ to obtain
\begin{flalign}\label{71}
\sum_{i=1}^{n}\int_{B_{r}}\left[\partial^{2}_{z}F_{j,\delta}(x,Dv_{j,\delta})D(D_{i}v_{j,\delta})+\partial^{2}_{x_{i},z}F_{j,\delta}(x,Dv_{j,\delta})\right]\cdot Dw \ \dx =-\sum_{i=1}^{n}\int_{B_{r}}f_{j,\delta}D_{i}w \ \dx,
\end{flalign}
which holds for all $w\in W^{1,2}(B_{r})$ with $\supp(w)\Subset B_{r}$. We let $\eta \in C^{1}_{c}(B_{r})$ be any non-negative map, $\gamma\ge 0$ a fixed number and set $w^{i}_{j,\delta}:=\eta^{2}H_{\delta}(Dv_{j,\delta})^{\gamma}D_{i}v_{j,\delta}$. A straightforward computation shows that
\begin{flalign*}
Dw^{i}_{j,\delta}=&\gamma\eta^{2}H_{\delta}(Dv_{j,\delta})^{\gamma-1}D_{i}v_{j,\delta}D(H_{\delta}(Dv_{j,\delta}))+\eta^{2}H_{\delta}(Dv_{j,\delta})^{\gamma}D(D_{i}v_{j,\delta})\nonumber \\
&+2H_{\delta}(Dv_{j,\delta})^{\gamma}D_{i}v_{j,\delta}D\eta,
\end{flalign*}
so, again by \eqref{48}, $w^{i}_{j,\delta}$ is admissible in \eqref{71}. We can rewrite \eqref{71} as
\begin{flalign}\label{72}
0=\mbox{(I)}_{z}+\mbox{(II)}_{z}+\mbox{(III)}_{z}+\mbox{(I)}_{x}+\mbox{(II)}_{x}+\mbox{(III)}_{x}+\mbox{(I)}_{\psi}+\mbox{(II)}_{\psi}+\mbox{(III)}_{\psi}
\end{flalign}
where the terms indexed with $x$ (resp. $\psi$) denote the ones stemming from those in \eqref{71} containing $\partial_{x,z}^{2}F_{j,\delta}$ (resp. $f_{j,\delta}$). Since
\begin{flalign*}
D(H_{\delta}(Dv_{j,\delta}))=2\sum_{l=1}^{n} D(D_{l}v_{j,\delta})D_{l}v_{j,\delta},
\end{flalign*}
with $\eqref{assfjd}_{2}$ we estimate
\begin{flalign*}
\mbox{(I)}_{z}&+\mbox{(II)}_{z}=\gamma\sum_{i=1}^{n}\int_{B_{r}}\eta^{2}H_{\delta}(Dv_{j,\delta})^{\gamma-1}\partial^{2}_{z}F_{j,\delta}(x,Dv_{j,\delta})D(D_{i}v_{j,\delta})\cdot D_{i}v_{j,\delta}D(H_{\delta}(Dv_{j,\delta})) \ \dx\nonumber \\
&+\int_{B_{r}}\eta^{2}H_{\delta}(Dv_{j,\delta})^{\gamma}\partial^{2}_{z}F_{j,\delta}(x,Dv_{j,\delta})\left(\sum_{i=1}^{n}D(D_{i}v_{j,\delta})\cdot D(D_{i}v_{j,\delta})\right) \ \dx\nonumber \\
\ge & \frac{\gamma}{c}\int_{B_{r}}\eta^{2}H_{\delta}(Dv_{j,\delta})^{\frac{p-4}{2}+\gamma}\snr{D(H_{\delta}(Dv_{j,\delta}))}^{2} \ \dx+\frac{1}{c}\int_{B_{r}}\eta^{2}H_{\delta}(Dv_{j,\delta})^{\frac{p-2}{2}+\gamma}\snr{D^{2}v_{j,\delta}}^{2} \ \dx.
\end{flalign*}
From $\eqref{assfjd}_{3}$, H\"older and Young inequalities we have
\begin{flalign*}
\snr{\mbox{(III)}_{z}}=&2\left | \ \sum_{i=1}^{n}\int_{B_{r}}\eta H_{\delta}(Dv_{j,\delta})^{\gamma}\partial^{2}_{z}F_{j,\delta}(x,Dv_{j,\delta})D(D_{i}v_{j,\delta})\cdot D_{i}v_{j,\delta} D\eta\ \dx\ \right|\nonumber \\
\le &c\int_{B_{r}}\eta H_{\delta}(Dv_{j,\delta})^{\gamma}\left[H_{\delta}(Dv_{j,\delta})^{\frac{p-2}{2}}+H_{\delta}(Dv_{j,\delta})^{\frac{q-2}{2}}\right]\snr{D^{2}v_{j,\delta}}\snr{Dv_{j,\delta}}\snr{D\eta} \ \dx\nonumber \\
\le&\frac{\sigma}{c}\int_{B_{r}}\eta^{2}H_{\delta}(Dv_{j,\delta})^{\frac{p-2}{2}+\gamma}\snr{D^{2}v_{j,\delta}}^{2} \ \dx\nonumber \\
&+\frac{c}{\sigma}\int_{B_{r}}\snr{D\eta}^{2}\left[H_{\delta}(Dv_{j,\delta})^{\frac{p}{2}+\gamma}+H_{\delta}(Dv_{j,\delta})^{q-\frac{p}{2}+\gamma}\right] \ \dx\nonumber \\
\le &\frac{\sigma}{c}\int_{B_{r}}\eta^{2}H_{\delta}(Dv_{j,\delta})^{\frac{p-2}{2}+\gamma}\snr{D^{2}v_{j,\delta}}^{2} \ \dx\nonumber \\
&+\frac{c}{\sigma}\left(\int_{B_{r}}\snr{D\eta}^{2m}\left[1+H_{\delta}(Dv_{j,\delta})^{m\left(q-\frac{p}{2}+\gamma\right)}\right] \ \dx\right)^{\frac{1}{m}},
\end{flalign*}
with $c=c(n,\nu,L,p,q,s)$. By $\eqref{assfjd}_{4,6}$, H\"older and Young inequalities we see that
\begin{flalign*}
\snr{(\mbox{I})_{x}}=&\gamma\left | \ \sum_{i=1}^{n}\int_{B_{r}}\eta^{2}H_{\delta}(Dv_{j,\delta})^{\gamma-1}\partial_{x_{i},z}^{2}F_{j,\delta}(x,Dv_{j,\delta})D_{i}v_{j,\delta}D(H_{\delta}(Dv_{j,\delta})) \ \dx \ \right |\nonumber \\
\le&c\gamma \int_{B_{r}}h_{\delta}(x)\left[H_{\delta}(Dv_{j,\delta})^{\frac{p-2}{2}+\gamma}+H_{\delta}(Dv_{j,\delta})^{\frac{q-2}{2}+\gamma}\right]\snr{D(H_{\delta}(Dv_{j,\delta}))} \ \dx\nonumber \\
\le &\sigma\frac{\gamma}{c}\int_{B_{r}}\eta^{2}H_{\delta}(Dv_{j,\delta})^{\frac{p-4}{2}+\gamma}\snr{D(H_{\delta}(Dv_{j,\delta}))}^{2} \ \dx\nonumber \\
&+c\frac{\gamma}{\sigma}\int_{B_{r}}\eta^{2}h_{\delta}(x)^{2}\left[H_{\delta}(Dv_{j,\delta})^{\frac{p}{2}+\gamma}+H_{\delta}(Dv_{j,\delta})^{q-\frac{p}{2}+\gamma}\right] \ \dx \nonumber \\
\le &\sigma\frac{\gamma}{c}\int_{B_{r}}\eta^{2}H_{\delta}(Dv_{j,\delta})^{\frac{p-4}{2}+\gamma}\snr{D(H_{\delta}(Dv_{j,\delta}))}^{2} \ \dx\nonumber \\
&+c\frac{\gamma}{\sigma}\nr{h}_{L^{s}(\Omega)}^{2}\left(\int_{B_{r}}\eta^{2m}\left[H_{\delta}(Dv_{j,\delta})^{m\left(\frac{p}{2}+\gamma\right)}+H_{\delta}(Dv_{j,\delta})^{m\left(q-\frac{p}{2}+\gamma\right)}\right] \ \dx\right)^{\frac{1}{m}},
\end{flalign*}
where $c=c(n,\nu,L,p,q,s)$. In an analogous fashion we also bound
\begin{flalign*}
\snr{\mbox{(II)}_{x}}=&\left| \ \sum_{i=1}^{n}\int_{B_{r}}\eta^{2}H_{\delta}(Dv_{j,\delta})^{\gamma}\partial_{x_{i},z}F_{j,\delta}(x,Dv_{j,\delta})\cdot D(D_{i}v_{j,\delta}) \ \dx\ \right |\nonumber \\
\le &c\int_{B_{r}}\eta^{2}h_{\delta}(x)\left[H_{\delta}(Dv_{j,\delta})^{\frac{p-1}{2}+\gamma}+H_{\delta}(Dv_{j,\delta})^{\frac{q-1}{2}+\gamma}\right]\snr{D^{2}v_{j,\delta}} \ \dx\nonumber \\
\le&\frac{\sigma}{c}\int_{B_{r}}\eta^{2}H_{\delta}(Dv_{j,\delta})^{\frac{p-2}{2}+\gamma}\snr{D^{2}v_{j,\delta}}^{2} \ \dx\nonumber \\
&+\frac{c}{\sigma}\int_{B_{r}}\eta^{2}h_{\delta}(x)^{2}\left[H_{\delta}(Dv_{j,\delta})^{\frac{p}{2}+\gamma}+H_{\delta}(Dv_{j,\delta})^{q-\frac{p}{2}+\gamma}\right] \ \dx\nonumber \\
\le&\frac{\sigma}{c}\int_{B_{r}}\eta^{2}H_{\delta}(Dv_{j,\delta})^{\frac{p-2}{2}+\gamma}\snr{D^{2}v_{j,\delta}}^{2} \ \dx\nonumber \\
&+\frac{c}{\sigma}\nr{h}_{L^{s}(\Omega)}\left(\int_{B_{r}}\eta^{2m}\left[H_{\delta}(Dv_{j,\delta})^{m\left(\frac{p}{2}+\gamma\right)}+H_{\delta}(Dv_{j,\delta})^{m\left(q-\frac{p}{2}+\gamma\right)}\right] \ \dx\right)^{\frac{1}{m}}
\end{flalign*}
and
\begin{flalign*}
\snr{\mbox{(III)}_{x}}=&2\left| \ \sum_{i=1}^{n} \int_{B_{r}}\eta H_{\delta}(Dv_{j,\delta})^{\gamma}\partial_{x_{i},z}F_{j,\delta}(x,Dv_{j,\delta})\cdot D_{i}v_{j,\delta}D\eta \ \dx \ \right |\nonumber \\
\le &c\int_{B_{r}}\eta h_{\delta}(x)\left[H_{\delta}(Dv_{j,\delta})^{\frac{p}{2}+\gamma}+H_{\delta}(Dv_{j,\delta})^{\frac{q}{2}+\gamma}\right]\snr{D\eta} \ \dx \nonumber \\
\le &c\int_{B_{r}}\snr{D\eta}^{2}\left[H_{\delta}(Dv_{j,\delta})^{\frac{p}{2}+\gamma}+H_{\delta}(Dv_{j,\delta})^{\frac{q}{2}+\gamma}\right] \ \dx\nonumber \\
&+c\nr{h}_{L^{s}(\Omega)}^{2}\left(\int_{B_{r}}\eta^{2m}\left[H_{\delta}(Dv_{j,\delta})^{m\left(\frac{p}{2}+\gamma\right)}+H_{\delta}(Dv_{j,\delta})^{m\left(\frac{q}{2}+\gamma\right)}\right] \ \dx\right)^{\frac{1}{m}}\nonumber \\
\le &c\left(1+\nr{h}_{L^{s}(\Omega)}^{2}\right)\left(\int_{B_{r}}(\eta^{2m}+\snr{D\eta}^{2m})\left[1+H_{\delta}(Dv_{j,\delta})^{m\left(q-\frac{p}{2}+\gamma\right)}\right] \ \dx \right)^{\frac{1}{m}}.
\end{flalign*}
In the previous two displays, $c=c(n,\nu,L,p,q,s)$. Finally, by means of \eqref{64}, \eqref{pqh}, H\"older and Young inequalities we control
\begin{flalign*}
\snr{\mbox{(I)}_{\psi}}:=&\gamma\left | \ \sum_{i=1}^{n}\int_{B_{r}}\eta^{2}H_{\delta}(Dv_{j,\delta})^{\gamma-1}f_{j,\delta}D_{i}v_{j,\delta}D(H_{\delta}(Dv_{j,\delta})) \ \dx \ \right |\nonumber \\
\le &\sigma\frac{\gamma}{c}\int_{B_{r}}\eta^{2}H_{\delta}(Dv_{j,\delta})^{\frac{p-4}{2}+\gamma}\snr{D(H_{\delta}(Dv_{j,\delta}))}^{2} \ \dx\nonumber \\
&+c\frac{\gamma}{\sigma}\int_{B_{r}}\eta^{2}f_{j,\delta}^{2}H_{\delta}(Dv_{j,\delta})^{\gamma+1-\frac{p}{2}} \ \dx \nonumber \\
\le &\sigma\frac{\gamma}{c}\int_{B_{r}}\eta^{2}H_{\delta}(Dv_{j,\delta})^{\frac{p-4}{2}+\gamma}\snr{D(H_{\delta}(Dv_{j,\delta}))}^{2} \ \dx\nonumber \\ &+c\frac{\gamma}{\sigma}\nr{f_{j,\delta}}_{L^{s}(\Omega)}^{2}\left(\int_{B_{r}}\eta^{2m}H_{\delta}(Dv_{j,\delta})^{m\left(1-\frac{p}{2}+\gamma\right)} \ \dx\right)^{\frac{1}{m}}\nonumber \\
\le &\sigma\frac{\gamma}{c}\int_{B_{r}}\eta^{2}H_{\delta}(Dv_{j,\delta})^{\frac{p-4}{2}+\gamma}\snr{D(H_{\delta}(Dv_{j,\delta}))}^{2} \ \dx \nonumber \\
&+c\frac{\gamma}{\sigma}\left(\int_{B_{r}}\eta^{2m}\left[1+H_{\delta}(Dv_{j,\delta})^{m\left(q-\frac{p}{2}+\gamma\right)}\right] \ \dx\right)^{\frac{1}{m}}.
\end{flalign*}
Similarly we have
\begin{flalign*}
\snr{\mbox{(II)}_{\psi}}=&\left| \ \sum_{i=1}^{n}\int_{B_{r}}\eta^{2}f_{j,\delta}H_{\delta}(Dv_{j,\delta})^{\gamma}D(D_{i}v_{j,\delta}) \ \dx \ \right|\nonumber \\
\le &\frac{\sigma}{c}\int_{B_{r}}\eta^{2}H_{\delta}(Dv_{j,\delta})^{\frac{p-2}{2}+\gamma}\snr{Dv_{j,\delta}}^{2} \ \dx\nonumber \\
&+\frac{c}{\sigma}\int_{B_{r}}\eta^{2}f_{j,\delta}^{2}H_{\delta}(Dv_{j,\delta})^{1-\frac{p}{2}+\gamma} \ \dx\nonumber \\
\le &\frac{\sigma}{c}\int_{B_{r}}\eta^{2}H_{\delta}(Dv_{j,\delta})^{\frac{p-2}{2}+\gamma}\snr{Dv_{j,\delta}}^{2} \ \dx\nonumber \\
&+c\frac{\gamma}{\sigma}\nr{f_{j,\delta}}_{L^{s}(\Omega)}^{2}\left(\int_{B_{r}}\eta^{2m}H_{\delta}(Dv_{j,\delta})^{m\left(1-\frac{p}{2}+\gamma\right)} \ \dx\right)^{\frac{1}{m}}\nonumber \\
\le &\frac{\sigma}{c}\int_{B_{r}}\eta^{2}H_{\delta}(Dv_{j,\delta})^{\frac{p-2}{2}+\gamma}\snr{Dv_{j,\delta}}^{2} \ \dx\nonumber \\
&+c\frac{\gamma}{\sigma}\left(\int_{B_{r}}\eta^{2m}\left[1+H_{\delta}(Dv_{j,\delta})^{m\left(q-\frac{p}{2}+\gamma\right)}\right] \ \dx\right)^{\frac{1}{m}}
\end{flalign*}
and 
\begin{flalign*}
\snr{\mbox{(III)}_{\psi}}=&2\left| \ \sum_{i=1}^{n}\int_{B_{r}}\eta H_{\delta}(Dv_{j,\delta})^{\gamma}f_{j,\delta}D_{i}v_{j,\delta}D\eta \ \dx \ \right|\nonumber \\
\le &c\int_{B_{r}}\snr{D\eta}^{2}H_{\delta}(Dv_{j,\delta})^{\gamma+\frac{1}{2}} \ \dx +c\nr{f_{j,\delta}}_{L^{s}(B_{r})}^{2}\left(\int_{B_{r}}\eta^{2m}H_{\delta}(Dv_{j,\delta})^{m\left(\gamma+\frac{1}{2}\right)} \ \dx\right)^{\frac{1}{m}}\nonumber \\
\le &c\left(\int_{B_{r}}(\eta^{2m}+\snr{D\eta}^{2m})\left[1+H_{\delta}(Dv_{j,\delta})^{m\left(q-\frac{p}{2}+\gamma\right)}\right] \ \dx\right)^{\frac{1}{m}},
\end{flalign*}
where we also used that $q-\frac{p}{2}>\frac{1}{2}$, being $p>1$. In the above three displays, $c=c(\texttt{data}_{\infty})$. All in all, we got
\begin{flalign}\label{74}
\gamma\int_{B_{r}}&\eta^{2}H_{\delta}(Dv_{j,\delta})^{\frac{p-4}{2}+\gamma}\snr{D(H_{\delta}(Dv_{j,\delta}))}^{2} \ \dx +\int_{B_{r}}\eta^{2}H_{\delta}(Dv_{j,\delta})^{\frac{p-2}{2}+\gamma}\snr{D^{2}v_{j,\delta}}^{2} \ \dx \nonumber \\
\le &c(\texttt{data}_{\infty})(1+\gamma)\left(\int_{B_{r}}(\eta^{2m}+\snr{D\eta}^{2m})\left[1+H_{\delta}(Dv_{j,\delta})^{m\left(q-\frac{p}{2}+\gamma\right)}\right] \ \dx\right)^{\frac{1}{m}}.
\end{flalign}
By \eqref{74}, Sobolev embedding theorem combined with the elementary inequality $(t^{l}+1)\le (t+1)^{l}$ for $t\ge 0$ and $l\ge 1$, we obtain
\begin{flalign}\label{79}
&\left(\int_{B_{r}}\eta^{2^{*}}\left[1+H_{\delta}(Dv_{j,\delta})^{\left(\gamma+\frac{p}{2}\right)\frac{2^{*}}{2}}\right] \ \dx\right)^{\frac{2}{2^{*}}}\le \left(\int_{B_{r}}\eta^{2^{*}}\left[1+H_{\delta}(Dv_{j,\delta})^{\left(\gamma+\frac{p}{2}\right)}\right]^{\frac{2^{*}}{2}} \ \dx\right)^{\frac{2}{2^{*}}}\nonumber \\
&\qquad \qquad \le c\int_{B_{r}}\left | \ D\left[\eta\left(1+H_{\delta}(Dv_{j,\delta})^{\frac{\gamma}{2}+\frac{p}{4}}\right)\right] \ \right |^{2} \ \dx\nonumber \\
&\qquad\qquad\le c\int_{B_{r}}\snr{D\eta}^{2}\left[1+H_{\delta}(Dv_{j,\delta})^{\gamma+\frac{p}{2}}\right] \ \dx\nonumber \\
&\qquad \qquad+c(\gamma+1)^{2}\int_{B_{r}}\eta^{2}H_{\delta}(Dv_{j,\delta})^{\gamma+\frac{p-4}{2}}\snr{D(H_{\delta}(Dv_{j,\delta}))}^{2} \ \dx\nonumber \\
&\qquad \qquad\le c(\texttt{data}_{\infty})(\gamma+1)^{2}\left(\int_{B_{r}}(\eta^{2m}+\snr{D\eta}^{2m})\left[1+H_{\delta}(Dv_{j,\delta})^{m\left(q-\frac{p}{2}+\gamma\right)}\right] \ \dx\right)^{\frac{1}{m}},
\end{flalign}
where we set 
\begin{flalign}\label{2*}
2^{*}:=\begin{cases} 
\ \frac{2n}{n-2}\quad &\mbox{if} \ \ n>2\\
\ \mbox{any number larger than} \ \frac{2sp}{3sp-2(p+sq)}\quad &\mbox{if} \ \ n=2.
\end{cases}
\end{flalign}
\subsection{Moser's iteration}
We shall use the modified Moser's iteration developed in \cite{demi}. For every integer $\kappa\ge 1$, we define by induction the exponents
\begin{flalign*}
\gamma_{1}:=0,\quad \gamma_{\kappa+1}:=\frac{1}{m}\left[\left(\gamma_{\kappa}+\frac{p}{2}\right)\frac{2^{*}}{2}-\frac{p}{2}\right],\quad \lambda_{\kappa}:=m\gamma_{\kappa}+\frac{p}{2}.
\end{flalign*}
It follows that
\begin{flalign}\label{75}
\lambda_{\kappa+1}=\left(\gamma_{\kappa}+\frac{p}{2}\right)\frac{2^{*}}{2}=:\zeta \lambda_{\kappa}+\tau,
\end{flalign}
where
\begin{flalign}\label{76}
\zeta:=\frac{2^{*}}{2m}\stackrel{s>n}{>}1\quad \mbox{and}\quad \tau:=\frac{2^{*}\lambda_{1}}{s}=\frac{2^{*}p}{2s}>0.
\end{flalign}
From \eqref{75} we have that for all integers $\kappa\ge 1$, there holds
\begin{flalign}\label{77}
\lambda_{\kappa+1}=\zeta^{\kappa}\lambda_{1}+\tau\sum_{i=0}^{\kappa-1}\zeta^{i}\quad \mbox{thus}\quad \gamma_{\kappa+1}=\frac{\lambda_{1}}{m}(\zeta^{\kappa}-1)+\frac{\tau}{m}\sum_{i=0}^{\kappa-1}\zeta^{i},
\end{flalign}
and, being $\zeta>1$, then $\lambda_{\kappa+1}>\lambda_{\kappa}$. Moreover, it is easy to see that
\begin{flalign}\label{78}
\gamma_{\kappa+1}\le \lambda_{1}\frac{\zeta^{\kappa}}{\zeta-1}\left(1+\frac{2^{*}}{s}\right)\le c(n,p,s)\zeta^{\kappa}.
\end{flalign}
From now on, all the balls considered will be concentric to $B_{r}$. We abbreviate
\begin{flalign*}
M(t)_{j,\delta}:=\nr{H_{\delta}(Dv_{j,\delta})}_{L^{\infty}(B_{t})}\quad \mbox{for all} \  t\in (0,r)
\end{flalign*}
and notice that, by \eqref{48}, $M_{j,\delta}(t)$ is bounded on any interval $[\rr_{1},\rr_{2}]$ with $0<\rr_{1}<\rr_{2}<r$. For $0<\rr\le \tau_{1} < \tau_{2}<r$, we consider a sequence $\{B_{\rr_{\kappa}}\}$ of shrinking balls, where $\rr_{\kappa }:=\tau_{1}+(\tau_{2}-\tau_{1})2^{-\kappa +1}$. Notice that $\{\varrho_{\kappa}\}$ is a decreasing sequence such that $\varrho_{1}=\tau_{2}$ and $\varrho_{\kappa}\to_{\kappa\to\infty} \tau_1$; therefore it is $\cap_{\kappa\in \N} B_{\varrho_{\kappa}} =B_{\tau_{1}}$ and $B_{\varrho_{1}}= B_{\tau_{2}}$. Accordingly, we fix corresponding cut-off functions $\eta_{\kappa}\in C^1_{c}(B_{r})$ with
\begin{flalign*}
\chi_{B_{\rr_{\kappa+1}}}\le \eta\le \chi_{B_{\rr_{\kappa}}}\quad \mbox{and}\quad \snr{D\eta_{\kappa}}\le \frac{4}{(\rr_{\kappa}-\rr_{\kappa+1})}=\frac{2^{\kappa+2}}{(\tau_{2}-\tau_{1})}.
\end{flalign*}
We fix $\eta=\eta_{\kappa}$ in \eqref{79} and rearrange it as to obtain
\begin{flalign}\label{80}
&\left(\int_{B_{\rr_{\kappa+1}}}\left[1+H_{\delta}(Dv_{j,\delta})^{\lambda_{\kappa+1}}\right] \ \dx\right)\nonumber \\
&\qquad\le c(\texttt{data}_{\infty})\left[1+M_{j,\delta}(\tau_{2})^{\frac{2^{*}\sigma}{2}}\right]\left(\frac{2^{\kappa}(\gamma_{\kappa}+1)}{\tau_{2}-\tau_{1}}\right)^{2^{*}}\left(\int_{B_{\rr_{\kappa}}}\left[1+H_{\delta}(Dv_{j,\delta})^{\lambda_{\kappa}}\right] \ \dx\right)^{\zeta},
\end{flalign}
where we set $\sigma:=q-\frac{p}{2}-\frac{p}{2m}>0$, since $q>p$ and $m>1$. For $\kappa \in \N$ we set
\begin{flalign*}
A_{\kappa}:=\left(\int_{B_{\rr_{\kappa}}}\left[1+H_{\delta}(Dv_{j,\delta})^{\lambda_{\kappa}}\right] \ \dx\right)^{\frac{1}{\lambda_{\kappa}}},
\end{flalign*}
thus \eqref{80} reads as
\begin{flalign}\label{81}
A_{\kappa+1}\le \left[1+M_{j,\delta}(\tau_{2})^{\frac{2^{*}\sigma}{2\lambda_{\kappa+1}}}\right]\left(\frac{c2^{\kappa}(\gamma_{\kappa}+1)}{\tau_{2}-\tau_{1}}\right)^{\frac{2^{*}}{\lambda_{\kappa+1}}}A_{\kappa}^{\frac{\lambda_{\kappa}\zeta}{\lambda_{\kappa+1}}},
\end{flalign}
with $c=c(\texttt{data}_{\infty})$. Iterating the inequality in \eqref{81} we obtain
\begin{flalign}\label{82}
A_{\kappa+1}\le\left[1+ M_{j,\delta}(\tau_{2})^{\frac{2^{*}\sigma}{2\lambda_{\kappa+1}}\sum_{i=0}^{\kappa-1}\zeta^{i}}\right]\prod_{i=0}^{\kappa-1}\left(\frac{c2^{\kappa-i}(\gamma_{\kappa-i}+1)}{\tau_{2}-\tau_{1}}\right)^{\frac{2^{*}\zeta^{i}}{\lambda_{\kappa+1}}}A_{1}^{\frac{\zeta^{\kappa}\lambda_{1}}{\lambda_{\kappa+1}}}
\end{flalign}
for all $\kappa\ge 1$. By \eqref{77} and simple comparison arguments
\begin{flalign*}
\frac{1}{\lambda_{\kappa+1}}\sum_{i=0}^{\kappa-1}\zeta^{i}\le \frac{1}{\lambda_{1}(\zeta-1)}\quad \mbox{and}\quad \lim_{\kappa\to \infty}\sum_{i=0}^{\kappa-1}(\kappa-i)\zeta^{i-\kappa}\le c(\zeta)<\infty,
\end{flalign*}
we get that
\begin{flalign}
\prod_{i=0}^{\kappa-1}&\left(\frac{c2^{\kappa-i}(\gamma_{\kappa-i}+1)}{\tau_{2}-\tau_{1}}\right)^{\frac{2^{*}\zeta^{i}}{\lambda_{\kappa+1}}}\stackrel{\eqref{78}}{\le}\prod_{i=0}^{\kappa-1}\left(\frac{c2^{\kappa-i}\zeta^{\kappa-i}}{\tau_{2}-\tau_{1}}\right)^{\frac{2^{*}\zeta^{i}}{\lambda_{\kappa+1}}}\nonumber \\
&\qquad \le \left(\frac{c}{\tau_{2}-\tau_{1}}\right)^{\frac{2}{p(\zeta-1)}}(2\zeta)^{\frac{2^{*}}{\lambda_{1}}\sum_{i=0}^{\kappa-1}(\kappa-i)\zeta^{i-\kappa}}\le \left(\frac{c}{\tau_{2}-\tau_{1}}\right)^{\hat{\theta}},\label{84}
\end{flalign}
with $\hat{\theta}=\hat{\theta}(n,s,p)$ and $c=c(\texttt{data}_{\infty})$, see \cite[Section 4.3]{demi} for more details. With \eqref{84} at hand we can further bound \eqref{82} to obtain
\begin{flalign}\label{83}
A_{\kappa+1}\le \left[1+M_{j,\delta}(\tau_{2})^{\frac{2^{*}\sigma}{2\lambda_{\kappa+1}}\sum_{i=0}^{\kappa-1}\zeta^{i}}\right]\left(\frac{c}{\tau_{2}-\tau_{1}}\right)^{\hat{\theta}}A_{1}^{\frac{\zeta^{\kappa}\lambda_{1}}{\lambda_{\kappa+1}}}.
\end{flalign}
Finally, notice that
\begin{flalign*}
\lim_{\kappa\to \infty}\frac{1}{\lambda_{\kappa+1}}\sum_{i=0}^{\kappa-1}\zeta^{i}=\frac{1}{(\zeta-1)\lambda_{1}+\tau}=:\theta_{1}\quad \mbox{and}\quad \lim_{\kappa\to \infty}\frac{\zeta^{\kappa}\lambda_{1}}{\lambda_{\kappa+1}}=\frac{\lambda_{1}(\zeta-1)}{\lambda_{1}(\zeta-1)+\tau}=:\theta_{2},
\end{flalign*}
so we can send $\kappa\to \infty$ in \eqref{83} and conclude with
\begin{flalign}\label{85}
M_{j,\delta}(\tau_{1})\le\left(\frac{c}{\tau_{2}-\tau_{1}}\right)^{\hat{\theta}} \left[1+M_{j,\delta}(\tau_{2})^{\frac{2^{*}\sigma}{2}\theta_{1}}\right]A_{1}^{\theta_{2}}
\end{flalign}
Since 
\begin{flalign*}
\frac{2^{*}\sigma}{2}\theta_{1}<1\Leftrightarrow\frac{q}{p}<1+\frac{1}{2}-\frac{1}{s}-\frac{1}{2^{*}},
\end{flalign*}
which is the case by \eqref{pqh} and \eqref{2*}. Hence, we can apply Young inequality to \eqref{85} with conjugate exponents $\theta_{3}:=\frac{2}{2^{*}\sigma\theta_{1}}$ and $\theta_{4}:=\frac{2}{2-2^{*}\sigma \theta_{1}}$ to get
\begin{flalign}\label{86}
M_{j,\delta}(\tau_{1})\le& \frac{1}{2}M_{j,\delta}(\tau_{2})+\left(\frac{c}{\tau_{2}-\tau_{1}}\right)^{\hat{\theta}}A_{1}^{\theta_{2}}+\left(\frac{c}{\tau_{2}-\tau_{1}}\right)^{\hat{\theta}\theta_{4}}A_{1}^{\theta_{2}\theta_{4}}\nonumber \\
\le &\frac{1}{2}M_{j,\delta}(\tau_{2})+\left(\frac{c(\texttt{data}_{\infty})}{\tau_{2}-\tau_{1}}\right)^{\tilde{\theta}}\left[1+A_{1}\right]^{\bar{\theta}},
\end{flalign}
where we set $\tilde{\theta}:=\hat{\theta}\theta_{4}$ and $\bar{\theta}:=\theta_{2}\theta_{4}$, thus $\tilde{\theta}=\tilde{\theta}(n,p,q,s)$ and $\bar{\theta}=\bar{\theta}(n,p,q,s)$. Finally, Lemma \ref{l5} and \eqref{86} render that
\begin{flalign}\label{87}
\nr{H_{\delta}(Dv_{j,\delta})}_{L^{\infty}(B_{\rr})}\le \left(\frac{c}{r-\rr}\right)^{\tilde{\theta}}\left[1+\left(\int_{B_{r}}H_{\delta}(Dv_{j,\delta})^{\frac{p}{2}} \ \dx\right)\right]^{\theta},
\end{flalign}
for $c=c(\texttt{data}_{\infty})$, $\tilde{\theta}=\tilde{\theta}(n,p,q,s)$ and $\theta=\theta(n,p,q,s)$.

\subsection{Convergence} Looking at the very definition of problem \eqref{opjd}, we fix an arbitrary $j\in \N$ and using $\eqref{assfjd}_{1}$, $\eqref{44}_{2,3}$ and \eqref{43} we get
\begin{flalign}\label{88}
\frac{\varepsilon_{j}}{q}\int_{B_{r}}\snr{Dv_{j,\delta}}^{q} \ \dx \le&\frac{\varepsilon_{j}}{q}\int_{B_{r}}H_{\delta}(Dv_{j,\delta})^{\frac{q}{2}} \ \dx \le \mathcal{F}_{j,\delta}(v_{j,\delta},B_{r})\nonumber \\
\le& \left[\int_{B_{r}}F_{\delta}(x,D\tilde{v}_{j}) \ \dx+\frac{\varepsilon_{j}}{q}\int_{B_{r}}(\mu_{\delta}^{2}+\snr{D\tilde{v}_{j}}^{2})^{\frac{q}{2}} \ \dx\right]\nonumber \\
\le&\int_{B_{r}}F(x,D\tilde{v}_{j})\ \dx +\left[\int_{B_{r}}\left[F_{\delta}(x,D\tilde{v}_{j})-F(x,D\tilde{v}_{j})\right] \ \dx\right]+o(j)\nonumber \\
=&\int_{B_{r}}F(x,D\tilde{v})\ \dx +\int_{B_{r}}\left[F_{\delta}(x,D\tilde{v}_{j})-F(x,D\tilde{v}_{j})\right] \ \dx+o(j).
\end{flalign}
Since $j\in \N$ is fixed and $D\tilde{v}_{j}\in W^{1,q}(B_{r})$, by \eqref{39} we have
\begin{flalign}\label{89}
\int_{B_{r}}\left[F_{\delta}(x,D\tilde{v}_{j})-F(x,D\tilde{v}_{j})\right] \ \dx=o_{j}(\delta)\to_{\delta\to 0}0,
\end{flalign}
therefore the sequence $\{Dv_{j,\delta}\}_{\delta>0}$ is bounded in $L^{q}(B_{r})$ uniformly in $\delta>0$. Hence, up to extract a (non-relabelled) subsequence (depending on the chosen index $j\in \N$), we find that
\begin{flalign}\label{93}
v_{j,\delta}\rightharpoonup_{\delta\to 0}v_{j} \ \ \mbox{in} \ W^{1,q}(B_{r}),\quad v_{j}\in \tilde{v}_{j}+W^{1,q}_{0}(B_{r}),\quad v_{j}\ge \psi \ \ \mbox{a.e. in} \ B_{r}.
\end{flalign}
From \eqref{85}, \eqref{88}, \eqref{89} and $\eqref{assfjd}_{1}$ it follows that
\begin{flalign}\label{90}
\nr{Dv_{j,\delta}}_{L^{\infty}(B_{\rr})}\le \left(\frac{c}{r-\rr}\right)^{\tilde{\theta}}\left[1+\mathcal{F}(v,B_{r})+o_{j}(\delta)+o(j)\right]^{\theta},
\end{flalign}
with $c=c(\texttt{data}_{\infty})$, $\tilde{\theta}=\tilde{\theta}(n,p,q,s)$ and $\theta=\theta(n,p,q,s)$. This implies that, again up to subsequences, $Dv_{j,\delta}\rightharpoonup^{*}_{\delta\to 0}Dv_{j}$ in $W^{1,\infty}(B_{\rr},\mathbb{R}^{n})$, so by weak$^{*}$-lower semincontinuity we can send $\delta\to 0$ in \eqref{90} to end up with
\begin{flalign}\label{91}
\nr{Dv_{j}}_{L^{\infty}(B_{\rr})}\le \left(\frac{c}{r-\rr}\right)^{\tilde{\theta}}\left[1+\mathcal{F}(v,B_{r})+o(j)\right]^{\theta},
\end{flalign}
for $c=c(\texttt{data}_{\infty})$, $\tilde{\theta}=\tilde{\theta}(n,p,q,s)$ and $\theta=\theta(n,p,q,s)$. Notice that \eqref{91} actually holds for all concentric balls $B_{\rr}\subset B_{r}$ with $\rr\in (0,r)$. Now, by \eqref{39} and \eqref{90} we have
\begin{flalign*}
\lim_{\delta\to 0}\int_{B_{\rr}}\left[F_{\delta}(x,Dv_{j,\delta})-F(x,Dv_{j,\delta})\right] \ \dx =0
\end{flalign*}
and, by weak lower semicontinuity there holds that
\begin{flalign*}
\mathcal{F}(v_{j},B_{\rr})\le \liminf_{\delta\to 0}\mathcal{F}(v_{j,\delta},B_{\rr}).
\end{flalign*}
Merging all the above informations we obtain
\begin{flalign*}
\mathcal{F}(v_{j},B_{\rr})\le& \liminf_{\delta\to 0}\int_{B_{\rr}}F_{\delta}(x,Dv_{j,\delta}) \ \dx\le \limsup_{\delta\to 0}\mathcal{F}_{j,\delta}(v_{j,\delta},B_{r})\le \mathcal{F}(v,B_{r})+o(j),
\end{flalign*}
where for the last inequality we also used \eqref{88} and \eqref{89}. Letting $\rr\to r$ in the previous display we see that
\begin{flalign}\label{92}
\mathcal{F}(v_{j},B_{r})\le \mathcal{F}(v,B_{r})+o(j).
\end{flalign}
By $\eqref{assfh}_{1}$, \eqref{92} and the arbitrariety of $\rr\in (0,r)$, we deduce that the sequence $\{Dv_{j}\}_{j\in \N}$ is uniformly bounded in $L^{p}(B_{r},\mathbb{R}^{n})$, therefore, recalling also $\eqref{93}_{3}$ and $\eqref{44}_{1}$, we get that
\begin{flalign}\label{94}
v_{j}\rightharpoonup_{j\to \infty}\tilde{v} \ \ \mbox{in} \ W^{1,p}(B_{r}), \quad \tilde{v}\in v+W^{1,p}_{0}(B_{r}),\quad \tilde{v}\ge \psi \ \ \mbox{a.e. in} \ B_{r},
\end{flalign}
thus $\tilde{v}\in \mathcal{K}_{\psi,v}(B_{r})$. Moreover, combining \eqref{91} and $\eqref{94}_{1}$ we also obtain that
\begin{flalign}\label{95}
v_{j}\rightharpoonup_{j\to \infty}^{*}v \quad \mbox{in} \ \ W^{1,\infty}(B_{\rr}), 
\end{flalign}
for all balls $B_{\rr}$ concentric to $B_{r}$ with $\rr\in (0,r)$. Weak$^{*}$-lower semicontinuity, \eqref{95} and \eqref{91} render that
\begin{flalign}\label{96}
\nr{D\tilde{v}}_{L^{\infty}(B_{\rr})}\le \left(\frac{c}{r-\rr}\right)^{\tilde{\theta}}\left[1+\mathcal{F}(v,B_{r})\right]^{\theta},
\end{flalign}
with $c=c(\texttt{data}_{\infty})$, $\tilde{\theta}=\tilde{\theta}(n,p,q,s)$ and $\theta=\theta(n,p,q,s)$. Now we can exploit $\eqref{94}_{1}$ and weak-lower semicontinuity to pass to the limit in \eqref{92} and obtain
\begin{flalign}\label{97}
\mathcal{F}(\tilde{v},B_{r})\le \liminf_{j\to \infty}\mathcal{F}(v_{j},B_{r})\le \limsup_{j\to \infty}\mathcal{F}(v_{j},B_{r})\le \mathcal{F}(v,B_{r}).
\end{flalign}
Combining \eqref{97}, $\eqref{94}_{3}$, the minimality of $v$ in class $\mathcal{K}_{\psi,v}(\Omega)$ and \eqref{98} we can conclude that $\tilde{v}=v$ a.e. on $B_{r}$ thus estimate \eqref{96} holds for $v$ as well. Finally, via a standard covering argument we get that $v\in W^{1,\infty}_{loc}(\Omega)$ and the proof is complete.

\section{Weak differentiability for obstacle problems with standard $q$-growth}\label{diff}
In this section we prove a higher regularity result for solutions of non-autonomous obstacle problems with standard polynomial growth. Precisely, we shall consider an integrand $F\colon \Omega\times \mathbb{R}^{n}\to \mathbb{R}$ satisfying
\begin{flalign}\label{asss}
\begin{cases}
\ z\mapsto F(\cdot,z)\in C^{2}_{loc}(\mathbb{R}^{n})\\
\ x\mapsto F(x,z)\in C^{\infty}_{loc}(\Omega) \ \ \mbox{for all} \ z\in \mathbb{R}^{n}\\
\ \tilde{\nu}\left[\htt(z)^{\frac{p}{2}}+\htt(z)^{\frac{q}{2}}\right]\le F(x,z)\le \tilde{L}\left[\htt(z)^{\frac{p}{2}}+\htt(z)^{\frac{q}{2}}\right]\\
\ \tilde{\nu}\left[\htt(z)^{\frac{p-2}{2}}+\htt(z)^{\frac{q-2}{2}}\right]\snr{\xi}^{2}\le \partial^{2}_{z}F(x,z)\xi\cdot \xi\\
\ \snr{\partial^{2}_{z}F(x,z)}\le \tilde{L}\left[\htt(z)^{\frac{p-2}{2}}+\htt(z)^{\frac{q-2}{2}}\right]\\
\ \snr{\partial^{2}_{x,z}F(x,z)}\le \tilde{L}\left[\htt(z)^{\frac{p-1}{2}}+\htt(z)^{\frac{q-1}{2}}\right],
\end{cases}
\end{flalign}
for all $x\in \Omega$ and $z,\xi \in \mathbb{R}^{n}$. Here, $0<\tilde{\nu}\le \tilde{L}$ are absolute constants and we set $\htt(z):=(\tilde{\mu}^{2}+\snr{z}^{2})$ with $\tilde{\mu}\in (0,1)$. For the obstacle function $\psi\colon \Omega\to \mathbb{R}$, we shall retain \eqref{obsh}. We study regularity for local minimizers of the variational integral with obstacle constraint
\begin{flalign}\label{ops}
\mathcal{K}_{\psi}^{*}(\Omega)\ni w\mapsto \min \int_{\Omega}F(x,Dw) \ \dx,
\end{flalign}
where this time
\begin{flalign*}
\mathcal{K}_{\psi}^{*}(\Omega):=\left\{w\in W^{1,q}(\Omega)\colon w\ge \psi \ \mbox{a.e. in}\ \Omega\right\}.
\end{flalign*}
Of course we are supposing that
\begin{flalign}\label{105}
\mathcal{K}_{\psi}^{*}(\Omega)\quad \mbox{is non-empty}.
\end{flalign}
Our main result in this perspective is the following
\begin{proposition}\label{regst}
Let $v\in \mathcal{K}_{\psi}^{*}(\Omega)$ be a solution of problem \eqref{ops} under assumptions \eqref{asss}, \eqref{obsh} and \eqref{105}. Then
\begin{flalign*}
v\in C^{1,\beta_{0}}_{loc}(\Omega) 
\end{flalign*}
for some $\beta_{0}=\beta_{0}(n,\tilde{\nu},\tilde{L},p,q)\in (0,1)$. Moreover, there holds that
\begin{flalign}\label{51}
v\in W^{2,2}_{loc}(\Omega)\quad \mbox{and}\quad \partial_{z}F(x,Dv)\in W^{1,2}_{loc}(\Omega,\mathbb{R}^{n}).
\end{flalign}
\end{proposition}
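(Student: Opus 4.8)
The plan is to prove Proposition~\ref{regst} by penalizing the obstacle, deriving a priori estimates that are uniform in the penalization parameter, and passing to the limit. Since the assertion is local, fix concentric balls $B_{\varrho}\Subset B_{r}\Subset\Omega$ with $r\le 1$. For $\varepsilon\in(0,1)$ pick a convex, non-decreasing $\Psi_{\varepsilon}\in C^{2}(\er)$ with $\Psi_{\varepsilon}\equiv 0$ on $[0,\infty)$, $0\le-\Psi_{\varepsilon}'\le\varepsilon^{-1}$, and $\Psi_{\varepsilon}(t)\to+\infty$ for every $t<0$ as $\varepsilon\to 0$, and consider the unconstrained problem
\[
v+W^{1,q}_{0}(B_{r})\ni w\mapsto \mathcal{F}_{\varepsilon}(w):=\int_{B_{r}}\big[F(x,Dw)+\Psi_{\varepsilon}(w-\psi)\big] \ \dx.
\]
By $\eqref{asss}_{3}$ this is coercive on $W^{1,q}(B_{r})$ and by $\eqref{asss}_{4}$ strictly convex, so it has a unique minimizer $v_{\varepsilon}$, which solves
\[
\int_{B_{r}}\partial_{z}F(x,Dv_{\varepsilon})\cdot D\varphi \ \dx=\int_{B_{r}}\mu_{\varepsilon}\varphi \ \dx \quad\text{for all } \varphi\in W^{1,q}_{0}(B_{r}),\qquad \mu_{\varepsilon}:=-\Psi_{\varepsilon}'(v_{\varepsilon}-\psi)\ge 0.
\]
For fixed $\varepsilon$ one has $\mu_{\varepsilon}\in L^{\infty}(B_{r})$, so by the standard theory for non-degenerate quasilinear equations with balanced $(p,q)$-structure as in \eqref{asss} and bounded right-hand side, $v_{\varepsilon}\in C^{1,\alpha}_{loc}(B_{r})\cap W^{2,2}_{loc}(B_{r})$; the task is to upgrade the relevant bounds to ones independent of $\varepsilon$.

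I would then establish the uniform estimates as follows. First, since $v\in\mathcal{K}^{*}_{\psi}(\Omega)$ gives $\Psi_{\varepsilon}(v-\psi)\equiv 0$, the map $v$ is admissible and $\mathcal{F}_{\varepsilon}(v_{\varepsilon})\le\mathcal{F}(v,B_{r})$; with $\eqref{asss}_{3}$ this bounds $\{Dv_{\varepsilon}\}_{\varepsilon}$ in $L^{q}(B_{r})$ and $\int_{B_{r}}\Psi_{\varepsilon}(v_{\varepsilon}-\psi)\,\dx$ uniformly. Second, because $\psi\in W^{2,\infty}(\Omega)$, assumptions $\eqref{asss}_{4,5,6}$ force $-\dv(\partial_{z}F(\cdot,D\psi))\in L^{\infty}_{loc}(\Omega)$ (expand the divergence as in \eqref{70} and bound $\htt(D\psi)$ by $\tilde{\mu}^{2}+\nr{D\psi}_{L^{\infty}}^{2}$); comparing this relation with the equation for $v_{\varepsilon}$ and testing with a cut-off of $(\psi-v_{\varepsilon})_{+}\in W^{1,q}_{0}(B_{r})$, on whose support $\mu_{\varepsilon}$ is concentrated, yields $\sup_{\varepsilon}\nr{\mu_{\varepsilon}}_{L^{\infty}(B_{r})}\le c(n,\tilde{\nu},\tilde{L},p,q,\nr{\psi}_{W^{2,\infty}(\Omega)})$. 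Third — the core point — I would differentiate the equation, test with $\eta^{2}\htt(Dv_{\varepsilon})^{\gamma}D_{k}v_{\varepsilon}$ and sum over $k$ exactly as in \eqref{71}--\eqref{74}: here $\eqref{asss}_{4,5}$ make $\partial^{2}_{z}F(x,z)$ comparable to $[\htt(z)^{\frac{p-2}{2}}+\htt(z)^{\frac{q-2}{2}}]\,\mathrm{Id}$ with ratio $\tilde{L}/\tilde{\nu}$, so the lower ellipticity \emph{matches} the upper growth, the mixed term from $\eqref{asss}_{6}$ is absorbed by Young's inequality against the good second-order term, and $\mu_{\varepsilon}$ enters only via its uniformly bounded $L^{\infty}$-norm; consequently the resulting Caccioppoli inequality and the Moser iteration of \cite{demi} (now free of the $\varepsilon_{j}$-degenerate contributions present in Section~\ref{pt2}) close with \emph{no} relation between $p$ and $q$ and give $\nr{Dv_{\varepsilon}}_{L^{\infty}(B_{\varrho})}\le c\,[1+\int_{B_{r}}(1+\snr{Dv_{\varepsilon}}^{q})\,\dx]^{\theta}$ uniformly in $\varepsilon$.

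Once $Dv_{\varepsilon}$ is bounded on $B_{\varrho}$ uniformly, $\htt(Dv_{\varepsilon})^{\frac{p-2}{2}}$ is bounded below by a positive constant there, and testing the \emph{differentiated} equation with $\eta^{2}D_{k}v_{\varepsilon}$ (absorbing the $\mu_{\varepsilon}$-contribution through the second step) gives a uniform bound for $\nr{D^{2}v_{\varepsilon}}_{L^{2}(B_{\varrho/2})}$, hence, by $\eqref{asss}_{5,6}$, for $\nr{D(\partial_{z}F(\cdot,Dv_{\varepsilon}))}_{L^{2}(B_{\varrho/2})}$; while a freezing/Campanato argument comparing $v_{\varepsilon}$ on small balls with solutions of the frozen homogeneous equation $\dv(\partial_{z}F(x_{0},Dw))=0$ — which are $C^{1,\alpha}_{loc}$ by Uhlenbeck-type theory since the frozen operator is non-degenerate with bounded ellipticity ratio — and using the Lipschitz-in-$x$ modulus from $\eqref{asss}_{6}$ together with $\nr{\mu_{\varepsilon}}_{L^{\infty}}$ gives a uniform $C^{1,\beta_{0}}(B_{\varrho/2})$ bound on $Dv_{\varepsilon}$ with $\beta_{0}=\beta_{0}(n,\tilde{\nu},\tilde{L},p,q)$. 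Passing to the limit, up to subsequences $v_{\varepsilon}\rightharpoonup\bar{v}$ in $W^{1,q}(B_{r})$, weakly$^{*}$ in $W^{1,\infty}(B_{\varrho/2})$ and weakly in $W^{2,2}(B_{\varrho/2})$, with $\bar{v}\in v+W^{1,q}_{0}(B_{r})$; the uniform bound on $\int\Psi_{\varepsilon}(v_{\varepsilon}-\psi)$ forces $\bar{v}\ge\psi$ a.e., and lower semicontinuity with $\Psi_{\varepsilon}\ge 0$ and $\mathcal{F}_{\varepsilon}(v_{\varepsilon})\le\mathcal{F}(v,B_{r})$ gives $\mathcal{F}(\bar{v},B_{r})\le\mathcal{F}(v,B_{r})$. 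Since, by the localization argument of Section~\ref{ma}, $v$ solves the obstacle problem on $B_{r}$ with its own boundary datum and $\bar{v}$ is admissible there, strict convexity $\eqref{asss}_{4}$ forces $\bar{v}=v$; the uniform estimates thus pass to $v$ by lower semicontinuity, and a covering argument yields $v\in C^{1,\beta_{0}}_{loc}(\Omega)\cap W^{2,2}_{loc}(\Omega)$ together with $\partial_{z}F(\cdot,Dv)\in W^{1,2}_{loc}(\Omega,\er^{n})$.

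The hard part, and the reason this holds with no restriction linking $p$ and $q$, is the uniform gradient bound in the third step: for integrands of type \eqref{asss} the ellipticity ratio of $\partial^{2}_{z}F$ is bounded by $\tilde{L}/\tilde{\nu}$ and $\tilde{\mu}>0$, so the Caccioppoli estimate does not generate the self-improving term with the dangerous exponent $q-\tfrac{p}{2}$ that in Section~\ref{pt2} necessitates \eqref{pqh} — but making this matching precise while simultaneously propagating the $L^{\infty}$-bound on the penalization multiplier $\mu_{\varepsilon}$ uniformly in $\varepsilon$ (so that it stays below every integrability threshold used in the iteration, in the second-derivative estimate and in the oscillation argument) is the delicate technical core of the proof.
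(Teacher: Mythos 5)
Your overall strategy---penalize the obstacle, prove estimates uniform in the penalization parameter, pass to the limit---is a legitimate and classical route, and it genuinely differs from the paper's: the paper does not reprove $C^{1,\beta_{0}}$ at all (it is quoted from the references \cite{ch,chle,muzi}, and only the consequence $v\in W^{1,\infty}_{loc}$ is actually used), and the $W^{2,2}$ assertion is obtained by difference quotients applied \emph{directly} to the variational inequality \eqref{52} for $v$ itself, via the constraint-preserving test map $w=v+\tau_{-h}(\eta^{2}\tau_{h}(v-\psi))$, so that no obstacle multiplier ever has to be identified or bounded. Your structural observation is correct: under \eqref{asss} the ellipticity ratio of $\partial^{2}_{z}F$ is controlled by $\tilde{L}/\tilde{\nu}$ and $\tilde{\mu}>0$, so no relation between $p$ and $q$ of the type \eqref{pqh} is needed, which is exactly why the quoted $C^{1,\alpha}$ theory applies here.

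The genuine gap is in your second step. Testing the penalized equation with a cut-off of $(\psi-v_{\varepsilon})_{+}$ produces, besides elliptic terms, only the integral quantity $\int\mu_{\varepsilon}\,(\psi-v_{\varepsilon})_{+}\,\eta \ \dx$, i.e.\ at best a weighted $L^{1}$-control of $\mu_{\varepsilon}$; it cannot yield the pointwise bound $\sup_{\varepsilon}\nr{\mu_{\varepsilon}}_{L^{\infty}(B_{r})}\le c$ on which every subsequent step depends (the Moser iteration, the absorption of the term $\mu_{\varepsilon}\,\eta^{2}D^{2}v_{\varepsilon}$ in the $W^{2,2}$ estimate, and the Campanato comparison). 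What is actually required is a Lewy--Stampacchia type dual estimate, $0\le \mu_{\varepsilon}\le \nr{(\dv(\partial_{z}F(\cdot,D\psi)))_{-}}_{L^{\infty}(B_{r})}$, whose proof uses a different test function (a truncation of $-\Psi_{\varepsilon}'(v_{\varepsilon}-\psi)$ above the threshold, exploiting the monotonicity of $\Psi_{\varepsilon}'$ together with the $W^{2,2}$ regularity of $v_{\varepsilon}$ available for fixed $\varepsilon$). The estimate is true and standard, but the derivation you sketch would fail, and since the entire chain of uniform bounds hangs on it, this step must be supplied. Two smaller points: $\Psi_{\varepsilon}$ must be non-increasing (not non-decreasing) for your sign conventions to be consistent; and $0\le -\Psi_{\varepsilon}'\le \varepsilon^{-1}$ only gives an \emph{upper} bound on $\Psi_{\varepsilon}(t)$ for $t<0$, so you should fix a choice, e.g.\ $-\Psi_{\varepsilon}'\equiv \varepsilon^{-1}$ on $(-\infty,-\varepsilon]$, which forces $\Psi_{\varepsilon}(t)\to\infty$ for each $t<0$ and hence $\bar{v}\ge \psi$ in the limit.
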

\begin{proof}
First notice that, by $\eqref{asss}_{1,3}$ we can compute the variational inequality associated to problem \eqref{ops}: we have that 
\begin{flalign}\label{52}
\int_{\Omega}\partial_{z}F(x,Dv)\cdot (Dw-Dv) \ \dx \ge 0\quad \mbox{for all} \ \ w\in \mathcal{K}_{\psi}^{*}(\Omega).
\end{flalign}
The local $C^{1,\beta_{0}}$-regularity follows from the results in \cite{ch,chle,muzi}, but for our ends $v\in W^{1,\infty}_{loc}(\Omega)$ will be enough. To prove the weak higher differentiability of $Dv$, as in Section \ref{fra}, we fix a ball $B_{r}\Subset \Omega$, $r\in (0,1]$, pick a cut-off function $\eta\in C^{1}_{c}(B_{r})$ so that
\begin{flalign*}
\chi_{B_{r/4}}\le \eta \le \chi_{B_{r/2}}\quad \mbox{and}\quad \snr{D\eta}\le \frac{4}{r},
\end{flalign*}
a vector $h\in \mathbb{R}^{n}$ with $\snr{h}\le \frac{1}{1000}\min\left\{\frac{r}{16},\dist(\partial B_{r},\partial \Omega)\right\}$ and test \eqref{52} against the map $w:=v+\tau_{-h}(\eta^{2}\tau_{h}(v-\psi))\in \mathcal{K}_{v,\psi}^{*}(B_{r})$. We obtain
\begin{flalign*}
0\le& -\int_{B_{r}}\tau_{h}(\partial_{z}F(x,Dv))\cdot D(\eta^{2}\tau_{h}(v-\psi)) \ \dx \nonumber \\
&=\mbox{(I)}+\mbox{(II)}+\mbox{(III)}+\mbox{(IV)}+\mbox{(V)}+\mbox{(VI)}.
\end{flalign*}
The decomposition into terms (I)-(VI) is the same appearing in Section \ref{hi}, but the resulting estimates will be slightly different from what we did before, owing to the higher regularity we are assuming now for both integrand, obstacle and solution. For simplicity we shall separate the three cases $q>p\ge 2$, $1<p<2\le q$ and $1<p<q\le 2$.\\\\
\emph{Case 1: $2\le p<q$}. By $\eqref{asss}_{4}$ and Lemma \ref{l1} we have
\begin{flalign*}
\mbox{(I)}:=&-\int_{B_{r}}\eta^{2}\left[\partial_{z} F(x+h,Dv(x+h))-\partial_{z} F(x+h,Dv(x))\right]\cdot \tau_{h}(Dv) \ \dx \nonumber \\
\le &-c\int_{B_{r}}\eta^{2}\left[\snr{\tau_{h}(V_{\tilde{\mu},p}(Dv))}^{2}+\snr{\tau_{h}(V_{\tilde{\mu},q}(Dv))}^{2}\right] \ \dx,
\end{flalign*}
for $c=c(n,\tilde{\nu},p,q)$. Using $\eqref{asss}_{4}$, the mean value theorem, Lemmas \ref{l1}, \ref{l2}, \ref{l3} and \ref{l6}, \eqref{obsh}, H\"older and Young inequalities we get
\begin{flalign*}
\snr{\mbox{(II)}}:=&\int_{B_{r}}\eta^{2}\left[\partial_{z} F(x+h,Dv(x+h))-\partial_{z} F(x+h,Dv(x))\right]\cdot \tau_{h}(D\psi) \ \dx\nonumber \\
\le &c\int_{B_{r}}\eta^{2}\left[\htt(Dv(x))^{\frac{p-2}{2}}+\htt(Dv(x+h))^{\frac{p-2}{2}}\right]\snr{\tau_{h}(Dv)}\snr{\tau_{h}(D\psi)} \ \dx \nonumber \\
&+c\int_{B_{r}}\eta^{2}\left[\htt(Dv(x))^{\frac{q-2}{2}}+\htt(Dv(x+h))^{\frac{q-2}{2}}\right]\snr{\tau_{h}(Dv)}\snr{\tau_{h}(D\psi)} \ \dx \nonumber \\
\le &\sigma \left(\int_{B_{r}}\eta^{2}\left[\snr{\tau_{h}(V_{\tilde{\mu},p}(Dv))}^{2}+\snr{\tau_{h}(V_{\tilde{\mu},q}(Dv))}^{2}\right] \ \dx\right) \ \dx \nonumber \\
&+c\left(\int_{B_{3r/4}}\htt(Dv)^{\frac{p}{2}} \ \dx \right)^{\frac{p-2}{p}}\left(\int_{B_{3r/4}}\snr{\tau_{h}(D\psi)}^{p} \ \dx\right)^{\frac{2}{p}}\nonumber \\
&+c\left(\int_{B_{3r/4}}\htt(Dv)^{\frac{q}{2}} \ \dx \right)^{\frac{q-2}{q}}\left(\int_{B_{3r/4}}\snr{\tau_{h}(D\psi)}^{q} \ \dx\right)^{\frac{2}{q}}\nonumber \\
\le &\sigma \left(\int_{B_{r}}\eta^{2}\left[\snr{\tau_{h}(V_{\tilde{\mu},p}(Dv))}^{2}+\snr{\tau_{h}(V_{\tilde{\mu},q}(Dv))}^{2}\right] \ \dx\right) \ \dx+c\snr{h}^{2},
\end{flalign*}
for $c=c(n,\tilde{\nu},\tilde{L},p,q,\sigma,\nr{Dv}_{L^{\infty}(B_{3r/4})},\nr{\psi}_{W^{2,\infty}(\Omega)})$. From $\eqref{asss}_{6}$, Lemmas \ref{l3}, \ref{l1} and \ref{l6}, $\eqref{obsh}$, H\"older and Young inequalities we obtain
\begin{flalign*}
\snr{\mbox{(III)}}:=&2\left| \ \int_{B_{r}}\eta\left[\partial_{z} F(x+h,Dv(x+h))-\partial_{z} F(x+h,Dv(x))\right]\cdot \tau_{h}(v-\psi)D\eta \ \dx\ \right |\nonumber \\
\le &c\int_{B_{r}}\eta\left[\htt(Dv)^{\frac{p-2}{2}}+\htt(Dv)^{\frac{q-2}{2}}\right]\snr{\tau_{h}(Dv)}\snr{\tau_{h}(v-\psi)}\snr{D\eta} \ \dx \nonumber \\
\le &\frac{\sigma}{c}\int_{B_{r}}\eta^{2}\left[\snr{\tau_{h}(V_{\tilde{\mu},p}(Dv))}^{2}+\snr{\tau_{h}(V_{\tilde{\mu},q}(Dv))}^{2}\right] \ \dx\nonumber \\
&+\sigma^{-1}\frac{c}{r^{2}}\int_{B_{r}}\left[\htt(Dv)^{\frac{p-2}{2}}+\htt(Dv)^{\frac{q-2}{2}}\right]\snr{\tau_{h}(v-\psi)}^{2} \ \dx\nonumber \\
\le &\frac{\sigma}{c}\int_{B_{r}}\eta^{2}\left[\snr{\tau_{h}(V_{\tilde{\mu},p}(Dv))}^{2}+\snr{\tau_{h}(V_{\tilde{\mu},q}(Dv))}^{2}\right] \ \dx\nonumber \\
&+\sigma^{-1}\frac{c}{r^{2}}\left(\int_{B_{3r/4}}H_{\delta}(Dv)^{\frac{p}{2}} \ \dx\right)^{\frac{p-2}{p}}\left(\int_{B_{r/2}}\snr{\tau_{h}(v-\psi)}^{p} \ \dx\right)^{\frac{2}{p}}\nonumber \\
&+\sigma^{-1}\frac{c}{r^{2}}\left(\int_{B_{3r/4}}H_{\delta}(Dv)^{\frac{q}{2}} \ \dx\right)^{\frac{q-2}{q}}\left(\int_{B_{r/2}}\snr{\tau_{h}(v-\psi)}^{q} \ \dx\right)^{\frac{2}{q}}\nonumber \\
\le &\frac{\sigma}{c}\int_{B_{r}}\eta^{2}\left[\snr{\tau_{h}(V_{\tilde{\mu},p}(Dv))}^{2}+\snr{\tau_{h}(V_{\tilde{\mu},q}(Dv))}^{2}\right] \ \dx\nonumber \\
&+\sigma^{-1}\frac{c\snr{h}^{2}}{r^{2}}\left(\int_{B_{3r/4}}H_{\delta}(Dv)^{\frac{p}{2}} \ \dx\right)^{\frac{p-2}{p}}\left(\int_{B_{3r/4}}\snr{Dv-D\psi}^{p} \ \dx\right)^{\frac{2}{p}}\nonumber \\
&+\sigma^{-1}\frac{c\snr{h}^{2}}{r^{2}}\left(\int_{B_{3r/4}}H_{\delta}(Dv)^{\frac{q}{2}} \ \dx\right)^{\frac{q-2}{q}}\left(\int_{B_{3r/4}}\snr{Dv-D\psi}^{q} \ \dx\right)^{\frac{2}{q}}\nonumber \\
\le &\frac{\sigma}{c}\int_{B_{r}}\eta^{2}\left[\snr{\tau_{h}(V_{\tilde{\mu},p}(Dv))}^{2}+\snr{\tau_{h}(V_{\tilde{\mu},q}(Dv))}^{2}\right] \ \dx+\frac{c\snr{h}^{2}}{r^{2}},
\end{flalign*}
where $c=c(n,\tilde{\nu},\tilde{L},p,q,\nr{Dv}_{L^{\infty}(B_{3r/4})},\nr{D\psi}_{L^{\infty}(\Omega)})$. By $\eqref{assfjd}_{5}$, Lemmas \ref{l1}, \ref{l2} and \ref{l3}, $\eqref{obsh}$, H\"older and Young inequalities we have
\begin{flalign*}
\snr{\mbox{(IV)}}&+\snr{\mbox{(V)}}:=\left | \ \int_{B_{r}}\eta^{2}\left[\partial_{z}F(x+h,Dv(x))-\partial_{z}F(x,Dv(x))\right]\cdot \tau_{h}(Dv) \ \dx  \ \right |\nonumber \\
&+\left | \ \int_{B_{r}}\eta^{2}\left[\partial_{z}F(x+h,Dv(x))-\partial_{z}F(x,Dv(x))\right]\cdot \tau_{h}(D\psi) \ \dx  \ \right |\nonumber \\
\le &c\snr{h}\int_{B_{r}}\eta^{2}\left[\htt(Dv)^{\frac{p-1}{2}}+\htt(Dv)^{\frac{q-1}{2}}\right]\snr{\tau_{h}(Dv)} \ \dx\nonumber \\
&+c\snr{h}\left(\int_{B_{r}}\eta^{2}\htt(Dv)^{\frac{p}{2}} \ \dx \right)^{\frac{p-1}{p}}\left(\int_{B_{r/2}}\snr{\tau_{h}(D\psi)}^{p} \ \dx \right)^{\frac{1}{p}}\nonumber \\
&+c\snr{h}\left(\int_{B_{r}}\eta^{2}\htt(Dv)^{\frac{q}{2}} \ \dx \right)^{\frac{q-1}{q}}\left(\int_{B_{r/2}}\snr{\tau_{h}(D\psi)}^{q} \ \dx \right)^{\frac{1}{q}}\nonumber \\
\le &\sigma \int_{B_{r}}\eta^{2}(\tilde{\mu}^{2}+\snr{Dv(x+h)}^{2}+\snr{Dv(x)}^{2})^{\frac{p-2}{2}} \snr{\tau_{h}(Dv)}^{2}\ \dx\nonumber \\
&+\sigma\int_{B_{r}}\eta^{2}(\tilde{\mu}^{2}+\snr{Dv(x+h)}^{2}+\snr{Dv(x)}^{2})^{\frac{q-2}{2}}\snr{\tau_{h}(Dv)}^{2} \ \dx \nonumber \\
&+c\snr{h}^{2}\int_{B_{3r/4}}\left[\htt(Dv)^{\frac{p}{2}}+\htt(Dv)^{\frac{q}{2}}\right] \ \dx +c\snr{h}^{2}\nonumber\\
\le&c\sigma \int_{B_{r}}\eta^{2}\left[\snr{\tau_{h}(V_{\tilde{\mu},p}(Dv))}^{2}+\snr{\tau_{h}(V_{\tilde{\mu},q}(Dv))}^{2}\right] \ \dx+c\snr{h}^{2},
\end{flalign*}
with $c=c(n,\tilde{\nu},\tilde{L},p,q,\sigma,\nr{Dv}_{L^{\infty}(B_{3r/4})},\nr{\psi}_{W^{2,\infty}(\Omega)})$. Finally, exploiting $\eqref{asss}_{6}$, Lemmas \ref{l2} and \ref{l3}, $\eqref{obsh}$, H\"older and Young inequality we end up with
\begin{flalign*}
\snr{(\mbox{VI})}:=&2\left| \ \int_{B_{r}}\eta \left[\partial_{z}F(x+h,Dv)-\partial_{z}F(x,Dv)\right]\cdot (\tau_{h}(v-\psi)D\eta) \ \dx \ \right |\nonumber \\
\le& \frac{c\snr{h}}{r}\int_{B_{r}}\left[\htt(Dv)^{\frac{p-1}{2}}+\htt(Dv)^{\frac{q-1}{2}}\right]\snr{\tau_{h}(v-\psi)} \ \dx \nonumber \\
\le &\frac{c\snr{h}}{r}\left(\int_{B_{r/2}}\htt(Dv)^{\frac{p}{2}} \ \dx\right)^{\frac{p-1}{p}}\left(\int_{B_{r/2}}\snr{\tau_{h}(v-\psi)}^{p} \ \dx \right)^{\frac{1}{p}}\nonumber \\
&+\frac{c\snr{h}}{r}\left(\int_{B_{r/2}}\htt(Dv)^{\frac{q}{2}} \ \dx\right)^{\frac{q-1}{q}}\left(\int_{B_{r/2}}\snr{\tau_{h}(v-\psi)}^{q} \ \dx \right)^{\frac{1}{q}}\nonumber \\
\le &\frac{c\snr{h}^{2}}{r}\left(\int_{B_{3r/4}}\htt(Dv)^{\frac{p}{2}} \ \dx\right)^{\frac{p-1}{p}}\left(\int_{B_{3r/4}}\snr{Dv-D\psi}^{p} \ \dx \right)^{\frac{1}{p}}\nonumber \\
&+\frac{c\snr{h}^{2}}{r}\left(\int_{B_{3r/4}}\htt(Dv)^{\frac{q}{2}} \ \dx\right)^{\frac{q-1}{q}}\left(\int_{B_{3r/4}}\snr{Dv-D\psi}^{q} \ \dx \right)^{\frac{1}{q}}\le \frac{c\snr{h}^{2}}{r},
\end{flalign*}
for $c=c(n,\tilde{\nu},\tilde{L},p,q,\nr{Dv}_{L^{\infty}(B_{3r/4})},\nr{\psi}_{W^{2,\infty}(\Omega)})$.\\\\
\emph{Case 2: $1<p<2\le q$.} Only terms (II)-(III) need a different treatment. By $\eqref{asss}_{4}$, the mean value theorem, Lemmas \ref{l1}, \ref{l2}, \ref{l3} and \ref{l6}, \eqref{obsh}, H\"older and Young inequalities we have
\begin{flalign*}
\snr{(\mbox{II})}\le &\sigma \left(\int_{B_{r}}\eta^{2}\left[\snr{\tau_{h}(V_{\tilde{\mu},p}(Dv))}^{2}+\snr{\tau_{h}(V_{\tilde{\mu},q}(Dv))}^{2}\right] \ \dx\right) \nonumber \\
&+c\int_{B_{r}}\eta^{2}\left[\htt(Dv(x))^{\frac{p-2}{2}}+\htt(Dv(x+h))^{\frac{p-2}{2}}\right]\snr{\tau_{h}(D\psi)}^{2} \ \dx\nonumber \\
&+c\int_{B_{r}}\eta^{2}\left[\htt(Dv(x))^{\frac{q-2}{2}}+\htt(Dv(x+h))^{\frac{q-2}{2}}\right]\snr{\tau_{h}(D\psi)}^{2} \ \dx\nonumber \\
\le &\sigma \left(\int_{B_{r}}\eta^{2}\left[\snr{\tau_{h}(V_{\tilde{\mu},p}(Dv))}^{2}+\snr{\tau_{h}(V_{\tilde{\mu},q}(Dv))}^{2}\right] \ \dx\right) \ \dx+c\tilde{\mu}^{p-2}\int_{B_{r/2}}\snr{\tau_{h}D\psi}^{2} \ \dx\nonumber \\
&+c\left(\int_{B_{3r/4}}\htt(Dv)^{\frac{q}{2}} \ \dx \right)^{\frac{q-2}{q}}\left(\int_{B_{3r/4}}\snr{\tau_{h}(D\psi)}^{q} \ \dx\right)^{\frac{2}{q}}\nonumber \\
\le &\sigma \left(\int_{B_{r}}\eta^{2}\left[\snr{\tau_{h}(V_{\tilde{\mu},p}(Dv))}^{2}+\snr{\tau_{h}(V_{\tilde{\mu},q}(Dv))}^{2}\right] \ \dx\right)+c\snr{h}^{2},
\end{flalign*}
where $c=c(n,\tilde{\nu},\tilde{L},\tilde{\mu},p,q,\sigma,\nr{Dv}_{L^{\infty}(B_{3r/4})},\nr{\psi}_{W^{2,\infty}(\Omega)})$. Using the mean value theorem, $\eqref{asss}_{4}$, Lemma \ref{l3}, H\"older and Young inequalities we get
\begin{flalign*}
\snr{\mbox{(III)}}\le& \frac{\sigma}{c} \left(\int_{B_{r}}\eta^{2}\left[\snr{\tau_{h}(V_{\tilde{\mu},p}(Dv))}^{2}+\snr{\tau_{h}(V_{\tilde{\mu},q}(Dv))}^{2}\right] \ \dx\right)\nonumber \\
&+\sigma^{-1}\frac{c}{r^{2}}\int_{B_{r/2}}\left[\htt(Dv)^{\frac{p-2}{2}}+\htt(Dv)^{\frac{q-2}{2}}\right]\snr{\tau_{h}(v-\psi)}^{2} \ \dx \nonumber \\
\le &\frac{\sigma}{c} \left(\int_{B_{r}}\eta^{2}\left[\snr{\tau_{h}(V_{\tilde{\mu},p}(Dv))}^{2}+\snr{\tau_{h}(V_{\tilde{\mu},q}(Dv))}^{2}\right] \ \dx\right)\nonumber \\
&+\sigma^{-1}\frac{c\snr{h}^{2}}{r^{2}}\tilde{\mu}^{p-2}\int_{B_{3r/4}}\snr{Dv-D\psi}^{2} \ \dx\nonumber \\
&+\sigma^{-1}\frac{c\snr{h}^{2}}{\sigma}\left(\int_{B_{3r/4}}\htt(Dv)^{\frac{q}{2}} \ \dx\right)^{\frac{q-2}{2}}\left(\int_{B_{3r/4}}\snr{Dv-D\psi}^{q} \ \dx\right)^{\frac{2}{q}}\nonumber \\
\le &\frac{\sigma}{c} \left(\int_{B_{r}}\eta^{2}\left[\snr{\tau_{h}(V_{\tilde{\mu},p}(Dv))}^{2}+\snr{\tau_{h}(V_{\tilde{\mu},q}(Dv))}^{2}\right] \ \dx\right)+\frac{c\snr{h}^{2}}{r^{2}},
\end{flalign*}
with $c=c(n,\tilde{\nu},\tilde{L},\tilde{\mu},p,q,\sigma,\nr{Dv}_{L^{\infty}(B_{3r/4})},\nr{D\psi}_{L^{\infty}(\Omega)})$.\\\\
\emph{Case 3: $1<p<q\le 2$.} As for the previous case we bound
\begin{flalign*}
\snr{(\mbox{II})}\le &\sigma \left(\int_{B_{r}}\eta^{2}\left[\snr{\tau_{h}(V_{\tilde{\mu},p}(Dv))}^{2}+\snr{\tau_{h}(V_{\tilde{\mu},q}(Dv))}^{2}\right] \ \dx\right) \ \dx \nonumber \\
&+c\int_{B_{r}}\eta^{2}\left[\htt(Dv(x))^{\frac{p-2}{2}}+\htt(Dv(x+h))^{\frac{p-2}{2}}\right]\snr{\tau_{h}(D\psi)}^{2} \ \dx\nonumber \\
&+c\int_{B_{r}}\eta^{2}\left[\htt(Dv(x))^{\frac{q-2}{2}}+\htt(Dv(x+h))^{\frac{q-2}{2}}\right]\snr{\tau_{h}(D\psi)}^{2} \ \dx\nonumber \\
\le &\sigma \left(\int_{B_{r}}\eta^{2}\left[\snr{\tau_{h}(V_{\tilde{\mu},p}(Dv))}^{2}+\snr{\tau_{h}(V_{\tilde{\mu},q}(Dv))}^{2}\right] \ \dx\right) \ \dx \nonumber \\
&+c(\tilde{\mu}^{p-2}+\tilde{\mu}^{q-2})\int_{B_{r/2}}\snr{D\psi}^{2} \ \dx \nonumber \\
\le &\sigma \left(\int_{B_{r}}\eta^{2}\left[\snr{\tau_{h}(V_{\tilde{\mu},p}(Dv))}^{2}+\snr{\tau_{h}(V_{\tilde{\mu},q}(Dv))}^{2}\right] \ \dx\right) \ \dx +c\snr{h}^{2},
\end{flalign*}
and
\begin{flalign*}
\snr{\mbox{(III)}}\le &\sigma \left(\int_{B_{r}}\eta^{2}\left[\snr{\tau_{h}(V_{\tilde{\mu},p}(Dv))}^{2}+\snr{\tau_{h}(V_{\tilde{\mu},q}(Dv))}^{2}\right] \ \dx\right) \ \dx\nonumber \\
&+\sigma^{-1}\frac{c}{r^{2}}\left(\tilde{\mu}^{p-2}+\tilde{\mu}^{q-2}\right)\int_{B_{r/2}}\snr{\tau_{h}(v-\psi)}^{2} \ \dx \nonumber \\
\le&\sigma \left(\int_{B_{r}}\eta^{2}\left[\snr{\tau_{h}(V_{\tilde{\mu},p}(Dv))}^{2}+\snr{\tau_{h}(V_{\tilde{\mu},q}(Dv))}^{2}\right] \ \dx\right) \ \dx\nonumber \\
&+\sigma^{-1}\frac{c\snr{h}^{2}}{r^{2}}\left(\tilde{\mu}^{p-2}+\tilde{\mu}^{q-2}\right)\int_{B_{3r/4}}\snr{(Dv-D\psi)}^{2} \ \dx\nonumber \\
\le &\sigma \left(\int_{B_{r}}\eta^{2}\left[\snr{\tau_{h}(V_{\tilde{\mu},p}(Dv))}^{2}+\snr{\tau_{h}(V_{\tilde{\mu},q}(Dv))}^{2}\right] \ \dx\right) \ \dx+\sigma^{-1}\frac{c\snr{h}^{2}}{r^{2}}.
\end{flalign*}
In both the previous displays, $c=c(n,\tilde{\nu},\tilde{L},\tilde{\mu},p,q,\sigma,\nr{\psi}_{W^{2,\infty}(\Omega)})$.\\\\
Merging all the previous estimates and choosing $\sigma>0$ sufficiently small, we obtain
\begin{flalign}\label{47}
\int_{B_{r}}\eta^{2}\left[\snr{\tau_{h}(V_{\tilde{\mu},p}(Dv))}^{2}+\snr{\tau_{h}(V_{\tilde{\mu},q}(Dv))}^{2}\right] \ \dx\le \frac{c}{r^{2}}\snr{h}^{2},
\end{flalign}
for $c=c(n,\tilde{\nu},\tilde{L},\tilde{\mu},p,q,\nr{Dv}_{L^{\infty}(B_{3r/4})},\nr{\psi}_{W^{2,\infty}(\Omega)})$. Combining \eqref{47} with Lemma \ref{l8} and recalling the specifics of the cut-off $\eta$, we obtain, that $V_{\tilde{\mu},q}(Dv)\in W^{1,2}(B_{r/4},\mathbb{R}^{n})$ and, after a standard covering argument we reach the conclusion that $V_{\tilde{\mu},q}(Dv)\in W^{1,2}_{loc}(B_{r},\mathbb{R}^{n})$. An easy computation than shows that
\begin{flalign}\label{49}
\left|D\right.&\left.(V_{\tilde{\mu},q}(Dv))\right |^{2}=\left(\frac{q-2}{2}\right)^{2}(\tilde{\mu}^{2}+\snr{Dv}^{2})^{\frac{q-6}{2}}\snr{Dv\cdot D^{2}v}^{2}\nonumber \\
&+(\tilde{\mu}^{2}+\snr{Dv}^{2})^{\frac{q-2}{2}}\snr{D^{2}v}^{2}+(q-2)(\tilde{\mu}^{2}+\snr{Dv}^{2})^{\frac{q-4}{2}}\snr{Dv\cdot D^{2}v}^{2}\nonumber \\
\ge &\min\{1,q-1\}(\tilde{\mu}^{2}+\snr{Dv}^{2})^{\frac{q-2}{2}}\snr{D^{2}v}^{2},
\end{flalign}
thus, for any given open subset $U\Subset B_{r}$, there holds that
\begin{flalign*}
\int_{U}&\snr{D^{2}v}^{2} \ \dx \le \max\left\{\tilde{\mu}^{2-q},\left[\tilde{\mu}^{2}+\nr{Dv}^{2}_{L^{\infty}(U)}\right]^{\frac{2-q}{2}}\right\}\int_{U}(\tilde{\mu}^{2}+\snr{Dv}^{2})^{\frac{q-2}{2}}\snr{D^{2}v}^{2} \ \dx\nonumber \\
&\qquad\qquad\quad\stackrel{\eqref{49}}{\le}\frac{\max\left\{\tilde{\mu}^{2-q},\left[\tilde{\mu}^{2}+\nr{Dv}^{2}_{L^{\infty}(U)}\right]^{\frac{2-q}{2}}\right\}}{\min\{1,q-1\}}\int_{U}\snr{D(V_{\tilde{\mu},q}(Dv))}^{2} \ \dx.
\end{flalign*}
Hence, after a standard covering argument, we can conclude that $v\in W^{2,2}_{loc}(B_{r})$ and, since
\begin{flalign*}
\int_{U}&\snr{D_{x_{j}}(\partial_{z_{k}}F(x,Dv))}^{2} \ \dx=\int_{U}\left| \ \partial^{2}_{x_{j},z_{k}}F(x,Dv)+\sum_{s=1}^{n}\partial_{z_{s},z_{k}}^{2}F(x,Dv)D^{2}_{x_{j},x_{s}}v\ \right |^{2} \ \dx\nonumber \\
&\quad \stackrel{\eqref{asss}_{5,6}}{\le}c\left[\htt(\nr{Dv}_{L^{\infty}(U)})^{p-1}+\htt(\nr{Dv}_{L^{\infty}(U)})^{q-1}\right]\nonumber \\
&\qquad+c\max\left\{\left[\htt(\nr{Dv}_{L^{\infty}(U)})^{p-2}+\htt(\nr{Dv}_{L^{\infty}(U)})^{q-2}\right],\tilde{\mu}^{p-2}+\tilde{\mu}^{q-2}\right\}\int_{U}\snr{D^{2}v}^{2} \ \dx,
\end{flalign*}
with $c=c(n,\tilde{\nu},\tilde{L},p,q)$, it also follows that $\partial_{z}F(\cdot,Dv)\in W^{1,2}_{loc}(\Omega,\mathbb{R}^{n})$ and we are done.
\end{proof}

\end{document}